\def\R{\mathbb R}
\def\N{\mathbb N}
\def\C{\mathcal C}
\def\cal{\mathcal}
\def\F{{\cal F}}
\def\H{{\cal H}}
\def\M{{\cal M}}
\def\L{{\cal L}}
\def\a{\alpha}
\def\b{\beta}
\def\g{\gamma_m}
\def\de{\delta}
\def\e{\varepsilon}
\def\l{\lambda}
\def\om{\omega}
\def\ov{\overline}
\lbrace\begin{array}{@{}l@{}}}%
\def\pa{\partial}
\def\d{\, \mathrm{d}}
\def\bd{{\rm bd}}
\def\ca{\mathbbmss{1}}
\def\pared{\partial^{*}}
\def\00{{\bf 0}}
\def\dive{{\rm div}}
\def\wt{\stackrel{*}{\rightharpoonup}}
\newcommand{\cc}{\subset\subset}
\newcommand{\restr}{%
  \,\raisebox{-.127ex}{\reflectbox{\rotatebox[origin=br]{-90}{$\lnot$}}}\,%
}
\newtheorem{theorem}{Theorem}[section]
\newtheorem{corollary}[theorem]{Corollary}
\newtheorem{proposition}[theorem]{Proposition}
\newtheorem{example}[theorem]{Example}
\newtheorem{lemma}[theorem]{Lemma}
\theoremstyle{definition}
\newtheorem{remark}[theorem]{Remark}
\newtheorem{definition}[theorem]{Definition}
\numberwithin{equation}{section}
\numberwithin{figure}{section}
\begin{document}
  
\title{Equilibria configurations for \\epitaxial crystal growth with adatoms}
\author[1]{Marco Caroccia}
\author[2]{Riccardo Cristoferi}
\author[3]{Laurent Dietrich}

\affil[1]{Faculdade de Ci\^{e}ncias, Departamento de Matem\'{a}tica, Universidade de Lisboa, 1749-016 Lisboa, Portugal}
\affil[2]{Department of Mathematical Sciences \\ Carnegie Mellon University \\ Pittsburgh, PA 15213}
\affil[3]{Lycée Fabert, Bâtiment Toqueville (CPGE),  57000 Metz, France}

   \maketitle

\begin{abstract}
The behavior of a surface energy $\F(E,u)$, where $E$ is a set of finite perimeter and $u\in L^1(\pared E, \R_+)$ is studied. These energies have been recently considered in the context of materials science to derive a new model in crystal growth that takes into account the effect of atoms freely diffusing on the surface (called \emph{adatoms}), which are responsible for morphological evolution through an attachment and detachment process.
Regular critical points, existence and uniqueness of minimizers are discussed and the relaxation of $\F$ in a general setting under the $L^1$ convergence of sets and the vague convergence of measures is characterized.
This is part of an ongoing project aimed at an analytical study of diffuse interface approximations of the associated evolution equations.
\end{abstract}

  \section{Introduction}
In this paper we investigate the behavior of a surface energy of the form
	\begin{equation}\label{en}
	\F(E,u):=\int_{\pa E} \psi(u) \d \H^{n-1} 
	\end{equation}
and in particular we characterize its lower semi-continuous envelope. Here $\psi:\R_+ \rightarrow (0,\infty)$ is a convex function, $\R_+:=[0,\infty)$, $E \subset \R^n$, a smooth set, represents the region occupied by the \textit{crystal} and $u\in L^1(\pa E,\R_+)$ is a Borel function representing the \emph{adatom density}.

The above quantity, proposed by Burger in \cite{Burger}, is the underlying energy for the evolution equations
	\begin{equation}\label{evo}
	\left\{\begin{array}{ll}
	\pa_t u + (\rho+uH_{\partial E_t})V =D\Delta_{\pa E_t} \psi'(u) & \text{on $\pa E_t$},\\
	bV+\psi H-(\rho+uH_{\partial E_t})\psi'(u)=0 & \text{on $\pa E_t$},
	\end{array}	
	\right.
	\end{equation}
where $\{E_t\}_{t\in I}$ are evolving smooth sets, $V$ is the normal velocity to $\pa E_t$, $H_{\partial E_t}$ is its mean curvature, $u(\cdot,t):\pa E_t\rightarrow \R_+$ is the \textit{adatom density} on $\pa E_t$, $\rho > 0$ is the constant volumetric mass density of the crystal, $b>0$ is a constant called \textit{kinetic coefficient} and $D>0$ is the \emph{diffusion coefficient} of the adatoms.
The above system of evolution equations is a refinement of the classical model for \emph{surface diffusion}, one of the most important mechanisms for crystal growth (see \cite{Taylor}), which, according to the Einstein-Nernst relation,
can be written as
\begin{equation}\label{eq:standardevo}
\rho V - D\Delta_{\pa E_t} \mu = F\cdot \nu  \quad\text{on $\pa E_t$}\,.
\end{equation}
Here $\mu$ denotes the chemical potential and $F$ represents the deposition flux on the surface (in \eqref{evo}, $F\equiv0$).
The evolution equation \eqref{eq:standardevo} and the corresponding energy has been widely used to study properties of crystal growth from an analytic point of view (see  \cite{Bonacini, Bonacini1, BonChamb, Capriani, FFLM, FFLM2, FFLM3, FuscoMorini}).
Nonetheless, it does not take into consideration the effect of the atoms freely diffusing on the surface (called \emph{adatoms}), which are responsible for surface evolution through an attachment and detachment process.
Taking into account their role is a relatively new feature in mathematical models.
System \eqref{evo} was introduced first by Fried and Gurtin \cite{Gurtin} a decade ago.
It accounts also for the kinetic effects through the term $bV$, that represents a dissipative force associated to these attachments and detachments.
To focus on the role of adatoms, \eqref{en} is a surface energy depending only on $u$, neglecting the elastic bulk and anisotropic surface terms that are usually considered in the study of \eqref{eq:standardevo}.
Thus, in our case, the chemical potential $\mu$ reduces to $\psi'(u)$.

So far, the only analytical results about \eqref{en} and \eqref{evo} have been obtained in \cite{Burger}, where a
study of critical points and minimizers is presented and where the dynamics are studied in two dimensions near equilibrium configurations.
In order to perform numerical simulation on the system \eqref{evo}, in \cite{RV} (in the particular case in which $\psi(s)=1+s^2/2$) the authors introduce a diffuse interface approximation based on the energy
\begin{equation}\label{eq:phasefield}
\F_\varepsilon(\phi,u):=\int_{\R^n}\left(\, \frac{\varepsilon}{2}|\nabla \phi|^2 +\frac{1}{\varepsilon}G(\phi)  \,\right)
    \psi(u) \d x\,.
\end{equation}
(here $G$ is a double well potential) and show formal convergence of the associated evolution equations to \eqref{evo}. Numerical analysis based on a level set approach is carried out in \cite{RV2}.\\

Our paper is a first step of an ongoing project in studying analytically the above convergence.
In the spirit of the work by Taylor (\cite{Taylor2}) and Cahn-Taylor \cite{CahnTaylor}, the idea is to see the approximate evolution equations proposed in \cite{RV} as a gradient flow of \eqref{eq:phasefield}
and to obtain information about the limiting equations by using $\Gamma$-convergence techniques (see \cite{DeGFra, DM, Bra}).
A natural question is whether $\F_\varepsilon$ $\Gamma$-converges in some suitable topology to $\F$.
For this reason, we rewrite the energy \eqref{en} within the context of sets of finite perimeter and Radon measures, and set
\[
\F(E,\mu):=\int_{\pared E} \psi(u)\d \H^{n-1}\,,
\]
when the measure $\mu$ is absolutely continuous with respect to $\H^{n-1}\restr\pared E$ and $u$ is the Radon-Nikodym derivative with respect to $\H^{n-1}\restr\pared E$, and $+\infty$ otherwise.
Here $\pared E$ is the reduced boundary of $E$ (see \cite{AFP}, \cite{Maggi}, that coincides with
$\partial E$ in the case of smooth sets). We adopt a natural topology given by the $L^1$ convergence of sets and the weak*-convergence of measures.
We show that in general $\F$ fails to be lower semi-continuous (see Corollary \ref{cor:neccond}) for that topology. To be precise, our main result can be stated as follows (see Theorem \ref{thm:relax}).

\begin{theorem}\label{mainthm}
Let $\psi:\R_+ \rightarrow(0,\infty)$ be a non-decreasing convex function.
The lower semi-continuous envelope of $\F$ is
\[
\ov{\F}(E,\mu):= \int_{\pared E} \ov{\psi}\left( u \right)\d\H^{n-1} + \Theta\mu^s(\R^n)\,,
\]
where $\ov{\psi}$ is the convex subadditive envelope of $\psi$ (see Definition \ref{def:convexsubadditiveenvelope}), and $\Theta:=\lim_{s\rightarrow\infty}\ov{\psi}(s)/s$.
Here $\mu=u \H^{n-1}\restr\pared E+\mu^s$ is the Radon-Nikodym decomposition of $\mu$.
\end{theorem}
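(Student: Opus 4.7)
The plan is the standard two-step relaxation argument: prove a liminf inequality $\ov{\F}(E,\mu)\leq\liminf_k\F(E_k,\mu_k)$ along admissible sequences, and then construct a recovery sequence realizing $\ov{\F}(E,\mu)$ for every admissible pair $(E,\mu)$. Since $\psi\geq\psi(0)>0$, an energy bound $\F(E_k,\mu_k)\leq C$ forces $\H^{n-1}(\pared E_k)\leq C/\psi(0)$, so along any energy-bounded sequence the perimeters are uniformly bounded; together with $E_k\to E$ in $L^1$, this yields $\H^{n-1}(\pared E)\leq\liminf\H^{n-1}(\pared E_k)$, and the total masses $\mu_k(\R^n)$ stay uniformly bounded, so one may freely extract subsequences.

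\textbf{Liminf inequality.} For energy-bounded sequences $\mu_k=u_k\H^{n-1}\restr\pared E_k$; introduce the auxiliary positive measures $\sigma_k:=\psi(u_k)\H^{n-1}\restr\pared E_k$, which have bounded mass and converge weakly-$*$, along a subsequence, to some finite positive Radon measure $\sigma$. A Besicovitch differentiation step reduces the liminf inequality to the two pointwise bounds
\[
\frac{d\sigma}{d(\H^{n-1}\restr\pared E)}(x_0)\geq \ov{\psi}(u(x_0)) \quad \text{for } \H^{n-1}\text{-a.e. } x_0\in\pared E,
\]
\[
\frac{d\sigma}{d|\mu^s|}(x_0)\geq \Theta\quad\text{for } |\mu^s|\text{-a.e. } x_0.
\]
The first is proved by blowing up at $x_0$: rescaled $E_k$ converge to the half-space determined by the outer normal $\nu_E(x_0)$, but $\pared E_k$ may collapse onto the limit hyperplane along several sheets sharing the local adatom mass. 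A Fubini disintegration on thin cylinders reduces the estimate to the two elementary operations defining $\ov{\psi}$: merging mass across sheets (subadditivity) and averaging densities along a single sheet (convexity). The second bound follows because at $|\mu^s|$-concentration points the densities $u_k$ must diverge, and $\psi(u_k)/u_k\to\Theta$ by the recession asymptotics of $\ov\psi$.

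\textbf{Recovery sequence.} A diagonal argument reduces the task to a dense class: $E$ polyhedral, $u$ piecewise constant on $\pared E$, and $\mu^s$ a finite sum of Dirac masses. Two local constructions then suffice. On each face carrying density $u$, select a near-optimal decomposition $u=\sum_i\lambda_i s_i$ with $\sum_i\lambda_i=1$ attaining $\ov{\psi}(u)$, and locally replace the face by a multisheet wrinkled structure of vanishing volume in which the $i$-th sheet has relative area $\lambda_i$ and carries density $s_i$; the surface energy converges to $\int_{\pared E}\ov{\psi}(u)\d\H^{n-1}$. For an atom $m\delta_{x_0}$, attach to $E$ at $x_0$ a thin tendril of vanishing volume and surface area $A_k\to 0$ carrying density $u_k:=m/A_k\to\infty$; its energy is $\psi(u_k)A_k=(\psi(u_k)/u_k)\,m\to\Theta m$. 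Negligible volumes guarantee $L^1$ convergence of the sets, and the concentration of mass at the atoms delivers the weak-$*$ convergence of the measures.

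\textbf{Main obstacle.} The most delicate step is the liminf inequality at regular points of $\pared E$: the multi-sheet behavior of $\pared E_k$ collapsing onto $\pared E$ must be analyzed carefully so that, in the limit, the joint effect of mass splitting among sheets and density averaging along them is captured precisely by $\ov{\psi}$, while being cleanly separated from the singular contribution $\Theta|\mu^s|$. Handling this combinatorics rigorously, typically via a Young-measure argument or a layered disintegration of the blown-up measures on the cylinders, is the technical heart of the proof; the upper bound, by contrast, is largely a matter of explicit geometric construction once the representation of $\ov{\psi}$ as an infimum over finite sheet decompositions is in hand.
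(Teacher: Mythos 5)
Your liminf inequality takes a genuinely different route than the paper. You propose a Besicovitch differentiation/blow-up argument together with a multi-sheet disintegration, whereas the paper proves lower semicontinuity of $\ov{\F}$ directly via the representation $\ov{\psi}(s)=\sup_j\{a_j s+b_j\}$ with $a_j,b_j\geq 0$ and a localization lemma on pairwise disjoint open sets, and then concludes from the pointwise inequality $\ov{\psi}\leq\psi$. The paper's route is considerably more elementary and avoids the delicate ``combinatorics of collapsing sheets'' that you flag as the technical heart. Also note a misstatement in your argument at singular points: you write $\psi(u_k)/u_k\to\Theta$, which is false for superlinear $\psi$ (there $\psi(s)/s\to\infty$); what in fact saves the lower bound is $\psi\geq\ov{\psi}$ together with the monotone convergence $\ov{\psi}(s)/s\searrow\Theta$.

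The recovery sequence has a genuine gap, and it is the central one. First, your decomposition $u=\sum_i\lambda_i s_i$ with $\sum_i\lambda_i=1$ and the sheets filling the face is a convex combination, so the infimum of $\sum_i\lambda_i\psi(s_i)$ over such decompositions is the convex envelope of $\psi$ at $u$, which equals $\psi(u)$ since $\psi$ is already convex -- you never recover the subadditive part. The decomposition must allow $\sum_i\lambda_i\geq 1$: the drop from $\psi$ to $\ov{\psi}$ is bought precisely by \emph{increasing the total area} (wriggling the boundary so $\H^{n-1}$ grows by a factor $r=u/s_0>1$) while lowering the carried density to $s_0$; this is what Lemma \ref{lem:wrigg} provides in the paper. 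Second, the tendril construction for Dirac masses fails for the same reason: you write $\psi(u_k)A_k=(\psi(u_k)/u_k)\,m\to\Theta m$, but when $\psi$ is superlinear $\psi(u_k)/u_k\to\infty$ and the tendril energy blows up. The paper circumvents this with a two-step construction (Lemma \ref{technical lemma singular} followed by Proposition \ref{propo ac}): first small balls carrying high density, which brings the $\ov{\F}$-energy close to $\Theta\mu^s(\R^n)$, and then a second application of the wriggling machinery to those balls, replacing the high density by $s_0$ with a proportionally larger boundary so that the $\F$-energy matches. Without this perimeter-increasing step on the singular approximants, your construction only works for asymptotically linear $\psi$.
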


The novelty of this result relies on the fact that we allow both $\pared E$ and $\mu$ to vary.
To our knowledge, in the literature, results in this context involve either a better convergence for the measures (see \cite{ButtFredd}), a fixed reference measure (see Bouchitt\'{e}-Buttazzo \cite[Section 3.3]{BouchitteButtazzoBook}, and Fonseca \cite{Irene}) or consider integrands depending on the jump of a $BV$ function and the normal to its jump set (see \cite[Section 5]{AFP}).

In the relaxation $\ov{\F}$ of $\F$, we obtain the \textit{convex subadditive envelope} of $\psi$, since subadditivity and convexity are necessary conditions for lower semi-continuity, issuing from oscillation phenomena (see Corollary \ref{cor:neccond}). In turn, concentration effects lead to the recession part $\Theta\mu^s$.
The key ingredient in our construction of the recovery sequences, where $\psi > \ov{\psi}$ (we recall that $\ov{\psi}\leq\psi$), is an interplay between increasing the perimeter and decreasing the adatom density accordingly. This is done in Proposition \ref{prop:density} and Lemma \ref{lem:wrigg}. As a consequence, we also obtain the following general fact which can be seen as a local estimate of the lack of upper semi-continuity of the perimeter in $L^1$. 

  \begin{theorem}\label{thm: main theorem wriggling}
  Let $E$ be a set of finite perimeter in $\R^N$ and $f\in L^1(\pared E,\R_+)$. Here $L^1(\pared E,\R_+)$ is meant with respect to the $\H^{n-1}\restr\pared E$ measure. Then, there exists a sequence $(E_k)_{k\in \N}$ of bounded, smooth sets of finite perimeter such that
 $\ca_{E_k}\rightarrow \ca_{E}$ in $L^1$ and
   \begin{align*}
  	\lim_{k\rightarrow +\infty} P(E_k;A) = P(E;A)+\int_{\pared E\cap A} f \d \H^{n-1} 
  	\end{align*}
  for all open sets $A$ in $\R^n$ such that $\H^{n-1}(\pa A \cap \pared E) = 0$.
  \end{theorem}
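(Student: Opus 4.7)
The plan is to build the sequence $E_k$ by a two-step procedure: first approximate $E$ by bounded smooth sets of finite perimeter (modifying the perimeter only by a vanishing amount), and then superimpose small "wrinkles" on their boundaries that inject the prescribed excess surface area $\int f \, d\H^{n-1}$ locally. The two auxiliary tools referenced in the excerpt, the density/approximation result (Proposition \ref{prop:density}) and the wriggling lemma (Lemma \ref{lem:wrigg}), will supply, respectively, the smooth base sets and the local surface-increasing deformations.

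First I would use the standard approximation theorem for sets of finite perimeter (together with Proposition \ref{prop:density}) to obtain a sequence of bounded smooth sets $\widetilde E_j \to E$ in $L^1$ with $P(\widetilde E_j; A) \to P(E; A)$ for every open $A$ with $\H^{n-1}(\partial A \cap \partial^*E) = 0$; boundedness is achieved by intersecting with a large ball whose radius is sent to infinity. In parallel I would approximate $f \in L^1(\partial^*E, \R_+)$ by a sequence of simple functions $f_j = \sum_{i=1}^{N_j} \alpha_i^j \, \ca_{F_i^j}$, where the $F_i^j \subset \partial^* E$ are disjoint Borel pieces that can be chosen essentially $\H^{n-1}$-regular (e.g.\ relatively open in the rectifiable structure, with $\H^{n-1}(\partial F_i^j \cap \partial^* E) = 0$), so that $f_j \to f$ in $L^1(\partial^* E)$.

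The core step is the wriggling construction: on each piece $F_i^j$, transplanted to a smooth portion of $\partial \widetilde E_j$ by the $L^1$-closeness and Reshetnyak-type matching of $\partial^* \widetilde E_j$ with $\partial^* E$, I would invoke Lemma \ref{lem:wrigg} to perturb the smooth boundary with oscillations of amplitude $\eta \to 0$ and frequency tuned so that the extra surface area over $F_i^j$ equals $\alpha_i^j \, \H^{n-1}(F_i^j) + o(1)$. Because the amplitude vanishes the perturbed sets still converge to $\widetilde E_j$ (hence to $E$) in $L^1$, and because the wrinkles are confined to a tubular neighborhood of the chosen piece, the contribution to $P(\cdot; A)$ is concentrated where $f$ dictates; the hypothesis $\H^{n-1}(\partial A \cap \partial^* E) = 0$ ensures that pieces straddling $\partial A$ have negligible weight and so the split between inside and outside of $A$ passes to the limit. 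Summing the constructions over the finitely many $F_i^j$ yields, for each $j$, a bounded smooth set whose perimeter on $A$ equals $P(\widetilde E_j; A) + \int_{\partial^* E \cap A} f_j \, d\H^{n-1} + o(1)$.

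Finally, a diagonal argument along $j \to \infty$ extracts the sought sequence $E_k$. The delicate point — the real obstacle — is the compatibility between the wriggling construction, which is naturally phrased on smooth pieces of $\partial \widetilde E_j$, and the prescribed density $f$ that lives on $\partial^* E$: one must carry $f_j$ from $\partial^* E$ to $\partial \widetilde E_j$ without losing control of either the $L^1$ norm or the localization on $A$. This is handled by choosing the approximating pieces $F_i^j$ so that the nearest-point projection from the smooth part of $\partial \widetilde E_j$ is well-defined and area-controlled on a set of almost full $\H^{n-1}$-measure, using the rectifiability of $\partial^* E$ and the $L^1$-convergence of the indicators together with the convergence of perimeters (which upgrades to convergence of the boundary measures by Reshetnyak's continuity theorem). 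Once this identification is in place, the perimeter accounting on each $A$ follows directly from Lemma \ref{lem:wrigg} and the condition $\H^{n-1}(\partial A \cap \partial^* E) = 0$.
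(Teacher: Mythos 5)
Your proposal reconstructs the direct route that the paper explicitly acknowledges is possible but declines to write out. The paper instead derives the theorem as a quick corollary of Theorem~\ref{thm:relax}(ii): it fixes the concrete density $\psi(s) = 1 + s^2/2$ (so $s_0 = \sqrt{2}$), sets $\mu := s_0(1+f)\,\H^{n-1}\restr\pared E$, and observes that because the adatom density $u = s_0(1+f)$ lies everywhere above $s_0$, the local averages $u_i^k$ produced in Proposition~\ref{prop:density} all lie above $s_0$ as well, so the wriggling factors $r_i^k = u_i^k/s_0$ are exactly $1+f$ locally and the truncated densities $v_i^k$ in Proposition~\ref{propo ac} are identically $s_0$; the recovery sequence then satisfies $\mu_k = s_0\,|D\ca_{E_k}|$ and $s_0|D\ca_{E_k}| \wt s_0(1+f)|D\ca_E|$, and \eqref{borelconv} converts weak* convergence of the surface measures to the claimed convergence of $P(E_k;A)$ for all admissible $A$ simultaneously. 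Your version rebuilds the machinery directly (smooth approximation, piecewise-constant approximation of $f$, wriggling each piece, diagonalization), which is conceptually cleaner and makes the theorem visibly independent of the energy functional, but it costs a substantial amount of bookkeeping that you only sketch: in particular, the transport of $f$ from $\pared E$ to $\pa\widetilde E_j$ is done in the paper not by nearest-point projection and Reshetnyak but by the dyadic-cube averaging scheme of Proposition~\ref{prop:density} (see \eqref{locaverage} and \eqref{eq:small}), which is precisely engineered to yield the needed area comparability and the $A$-independence of the construction; and the uniformity in $A$ is obtained there from genuine weak* convergence of $|D\ca_{E_k}|$, not from a per-$A$ perimeter estimate followed by a diagonal argument. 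Both proofs are valid; the paper's buys brevity at the price of opacity (the key geometric fact is hidden inside the structure of the recovery sequence), while yours buys transparency at the price of needing to fully carry out the cube-matching argument you currently wave at.
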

  
\noindent It is worth noticing that with $f\equiv\alpha$ we get $P(E_k;A)\rightarrow (1+\a)P(E;A)$. The non triviality of the above results relies on the fact that the sequence $(E_k)_{k\in \N}$ does \textit{not} depend on $A$. 

We also investigate critical points and minimizers of $\F$ and $\ov{\F}$ under a total mass constraint
$$\rho |E| + \int_{\pared E} u \d \H^{n-1} = m.$$
In Proposition \ref{prop:charregcrit} we define a notion of \textit{regular critical points} of $\F$ and if $\psi$ is strictly convex and of class $\mathcal{C}^1$ we characterize them as the balls with constant adatom density $c$ satisfying 
$$(\psi(c) - c\psi'(c))H_{\pa E} = \rho \psi'(c)$$
where $H_{\pa E}$ denotes the mean curvature of $\pa E$.
The above condition can be written as
\[
H_{\pa E}\psi(c)-\psi'(c)\rho_{\text{eff}}=0\,,
\]
where $\rho_{\text{eff}}:=\rho+cH_{\pa E}$ plays the role of an \emph{effective density}, as can be seen in \eqref{evo}.
In Theorem \ref{thm:ex}, we provide sharp assumptions on $\psi$ to ensure that the constrained minimum of $\F$ can be reached by a ball with constant but non-zero adatom density. Nonetheless in Proposition \ref{prop:plateau} we show that the energy restricted to those couples can exhibit a plateau of minimizers even if $\psi$ is strictly convex.
For what concerns $\ov{\F}$, in Theorems \ref{critfbar} and \ref{rolexx} we define corresponding notions of regular critical points and constrained minimizers and show that the above results still hold for the \textit{absolutely continuous part} $(E,u)$ of $(E,\mu)$ if $|E| > 0$.

It is interesting to notice that due to the structure of the problem we are able to prove existence of minimizers without using the Direct Method of the Calculus of Variations. However, for the sake of completeness a compactness result for sequences of bounded energy is proven in the Appendix (Theorem \ref{app:comp}).

Finally, we would like to point out that the \textit{parabolicity condition}
\begin{equation}\label{paracond}
\psi(s) - s\psi'(s) \geq 0
\end{equation}
plays a central role in our analysis, as it defines $\ov{\psi}$ (see Remark \ref{recipe}) and appears in different other contexts. It was introduced in \cite{Burger} as a stability condition and appears as a parabolicity condition in the evolution equations, as we will discuss in a forthcoming paper about the aforementioned $\Gamma$-convergence-type analysis and associated evolution equations. In particular, by adapting the method developed in the current paper, we will show that $\F_\varepsilon$ $\Gamma$-converges to $\ov{\F}$. \\

The organization of this paper is as follows: in Section 2 we recall some basic facts that we will use throughout the paper. Section 3 deals with critical points and the study of constrained minimizers. Section 4 is the central part of this paper, and is where we prove Theorem \ref{mainthm}. Section 5 studies minimizers of the relaxed functional. Finally, in the appendix we prove some basic facts about the convex subadditive envelope of a function and present some additional and general results derived from Section 4.


  \section{Preliminaries}
    
We collect here the basic notions and notations we will use throughout the paper. 

\subsection{Sets of finite perimeter}

We start by recalling the basic notions of set of finite perimeters, which can be found in \cite[Section 3]{AFP} and \cite[Section 11]{Maggi}.
  
\begin{definition}
Let $E$ be an $\mathcal L^n$ measurable set of $\mathbb R^n$. We call \textit{perimeter} of $E$ in $\mathbb R^n$
$$P(E):= \sup \left\{ \int_E \text{div}(\phi) \mathrm dx \enskip :  \enskip \phi \in  \mathcal C^1_c(\mathbb R^n, \mathbb R^n),\  \| \phi \|_\infty \leq 1 \right\}.$$
We say that $E$ is a \textit{set of finite perimeter} if $|E|<\infty$ and $P(E) < \infty$.

We will denote by $\mathfrak C (\R^n)$ the family of all sets of finite perimeter in $\mathbb R^n$.
\end{definition}

\begin{remark}
If $E$ is a set of finite perimeter, then its characteristic function $\ca_E \in  BV(\mathbb R^n)$ is of \textit{bounded variation}. Its distributional derivative $D \ca_E$ is a $\mathbb R^n$-valued finite Radon measure on $\mathbb R^n$. We will write $|D \ca_E|$ for its total variation measure.
\end{remark}

\begin{definition}
For any Borel set $F\subset \mathbb R^n$ the \textit{relative perimeter of $E$ in $F$} is defined as:
$$P(E;F) = |D\ca_E|(F).$$
\end{definition}

\begin{definition}
Let $E \subset \mathbb R^n$ be a set of finite perimeter. The \textit{reduced boundary of $E$} is the set
$$ \pa^* E := \left\{x\in \text{supp} |D \ca_E|\  :\  \exists\lim_{r\to 0} \frac{D \ca_E (B_r(x))}{|D \ca_E|(B_r(x))}=:\nu_E(x)\,,\quad |\nu_E(x)| = 1. \right\} $$  
\end{definition}

\begin{remark}
It is well known that the reduced boundary of a set of finite perimeter is an $n-1$ rectifiable set and 
$$|D \ca_E| = \mathcal H^{n-1} \llcorner \pa^* E, \quad D \ca_E = |D \ca_E|\nu_E.$$ 
Moreover, the following generalized Gauss-Green formula holds true
\begin{equation*}
\int_E \text{div }T \ \mathrm dx = -\int_{\pa^* E} T \cdot \nu_E \ \mathrm d\mathcal H^{n-1}
\end{equation*}
for every $T \in \mathcal C^1_c(\mathbb R^n, \mathbb R^n)$.
\end{remark}

\subsection{Smooth manifolds}

Here we recall some differentiability and integrability results for smooth manifolds. For a reference, see \cite[Section 2.10]{AFP} and \cite[Section 8]{Maggi}.

\begin{definition}\label{def:tangdiff}
Let $M \subset \mathbb R^n$ be a $\mathcal C^1$ hypersurface and let us denote by $T_x M$ the tangent space to $M$ at $x\in M$.
A function $f: \mathbb R^n \to \mathbb R^m$ is said to be \textit{tangentially differentiable with respect to $M$ at x} if the restriction of $f$ to $x+T_x M$ is differentiable at $x$, and we will call $\nabla^M f(x)$ an associated Jacobian matrix. Moreover, if $f: \mathbb R^n \to \mathbb R^m$ is tangentially differentiable at $x\in M$, we define the \textit{tangential jacobian of $f$ with respect to $M$ at $x$} as
$$ J^M f(x) := \sqrt{\det \left([\nabla^M f(x)]^T \nabla^M f(x) \right)}\,,$$
where $[\nabla^M f(x)]^T$ denotes the transpose matrix of $\nabla^M f(x)$. 
\end{definition}

\begin{theorem}\label{thm:areaformula}
Let $M \subset \mathbb R^n$ be a $\mathcal C^1$ hypersurface and let $f: \R^n \to \R^n$ be an injective $\mathcal C^1$ function.
Then, the following \emph{area formula} holds 
\begin{equation}\label{areaformula}
 \H^{n-1} (f(M))  = \int_M J^M f(x) \d \H^{n-1}(x)\,.
 \end{equation}
Moreover, if $g:\R^n\rightarrow[0,\infty]$ is a Borel function, then also the following change of variable formula holds
\begin{equation}\label{eq:changeofvariable}
\int_{f(M)} g(y)\d\H^{n-1}(y)=\int_M g\left(f(x)\right)J^M f(x) \d\H^{n-1}(x)\,.
\end{equation}
\end{theorem}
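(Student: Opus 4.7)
The plan is to reduce to the classical area formula for $\mathcal{C}^1$ maps from $\R^{n-1}$ to $\R^n$ via a local chart decomposition of $M$. Since $M$ is a $\mathcal{C}^1$ hypersurface, I cover it by countably many open pieces $U_i \subset M$ each carrying an injective $\mathcal{C}^1$ parameterization $\varphi_i : V_i \subset \R^{n-1} \to U_i$. By refining the cover, the overlaps $U_i \cap U_j$ can be made $\H^{n-1}$-negligible, so it suffices to prove \eqref{areaformula} with $M$ replaced by each $U_i$ and then sum over $i$.

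On a fixed chart, since $f$ is $\mathcal{C}^1$ and injective, $f \circ \varphi_i : V_i \to \R^n$ is $\mathcal{C}^1$ and injective. The classical area formula for maps $\R^{n-1} \to \R^n$ applied to both $\varphi_i$ and $f \circ \varphi_i$ yields
\[
\H^{n-1}(f(U_i)) = \int_{V_i} J(f \circ \varphi_i)(t) \d t, \qquad \int_{U_i} h \d \H^{n-1} = \int_{V_i} h(\varphi_i(t)) J(\varphi_i)(t) \d t,
\]
for every non-negative Borel $h$ on $U_i$. By the chain rule, $D(f \circ \varphi_i)(t) = Df(\varphi_i(t)) \cdot D\varphi_i(t)$, and since the image of $D\varphi_i(t)$ is precisely $T_{\varphi_i(t)} M$, this equals $\nabla^M f(\varphi_i(t)) \cdot D\varphi_i(t)$. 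A direct calculation with the Cauchy-Binet formula on these rectangular matrices then yields the key algebraic identity
\[
J(f \circ \varphi_i)(t) = J^M f(\varphi_i(t)) \cdot J(\varphi_i)(t),
\]
which geometrically says that the $(n-1)$-volume distortion from $\R^{n-1}$ to $\R^n$ factors through $T_x M$. Substituting this into the first display and then applying the second display with $h = J^M f$ gives $\H^{n-1}(f(U_i)) = \int_{U_i} J^M f \d \H^{n-1}$, and summing over $i$ proves \eqref{areaformula}.

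For the change of variable formula \eqref{eq:changeofvariable}, \eqref{areaformula} applied to $f$ restricted to $M \cap f^{-1}(B)$ for an arbitrary Borel set $B \subset \R^n$ delivers the identity for indicator functions $g = \mathbf{1}_B$; linearity extends this to simple functions and monotone convergence extends it to all non-negative Borel $g$.

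The main obstacle is the verification of the multiplicative identity for the Jacobians: once $J^M f$ is written via Cauchy-Binet as $\sqrt{\det([\nabla^M f]^T \nabla^M f)}$ and one expresses $[D(f \circ \varphi_i)]^T[D(f \circ \varphi_i)] = [D\varphi_i]^T [\nabla^M f]^T [\nabla^M f] [D\varphi_i]$, the identity follows from the multiplicativity of the determinant for appropriate rectangular factorizations, but the bookkeeping of tangent frames versus Euclidean frames is where care is needed.
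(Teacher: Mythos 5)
The paper does not prove this theorem: it appears in the Preliminaries (Section 2.2) with citations to \cite[Section 2.10]{AFP} and \cite[Section 8]{Maggi}, so there is no in-text proof to compare against. Your argument is the standard textbook derivation of the area formula on $\mathcal{C}^1$ hypersurfaces (localization to charts, the classical area formula for maps $\R^{n-1}\to\R^n$, the chain rule combined with the Cauchy--Binet/Gram-determinant identity, and a monotone class argument to pass from \eqref{areaformula} to \eqref{eq:changeofvariable}), and it is essentially correct.

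One imprecision is worth flagging. You say ``by refining the cover, the overlaps $U_i\cap U_j$ can be made $\H^{n-1}$-negligible''; for an \emph{open} cover of a hypersurface this is not possible in general. The standard fix is to disjointify: given a countable chart cover $\{U_i\}$, set $\tilde U_1:=U_1$ and $\tilde U_i:=U_i\setminus\bigcup_{j<i}U_j$, and apply the classical area formula on each Borel set $\varphi_i^{-1}(\tilde U_i)\subset V_i$ (the area formula on $\R^{n-1}$ holds for arbitrary measurable subsets of the chart domain, so restricting to a Borel piece is harmless). With that replacement the rest of the argument is sound. In the Jacobian identity, note that $\nabla^M f(\varphi_i(t))\cdot D\varphi_i(t)$ is legitimate only once $\nabla^M f$ is interpreted as an $n\times n$ matrix acting on $\R^n$ (or once $D\varphi_i(t)$ is rewritten in a tangent orthonormal frame as $E\,A$ with $A\in\R^{(n-1)\times(n-1)}$, giving $D(f\circ\varphi_i)=\nabla^M f\, A$ and hence $J(f\circ\varphi_i)=J^M f\cdot|\det A|=J^M f\cdot J\varphi_i$). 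You correctly identify this bookkeeping as the delicate point; carried out carefully as above, the identity and the theorem follow.
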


\begin{definition}
Let $M \subset \mathbb R^n$ be a $\mathcal C^1$ hypersurface. We say that a vector field $T:M\rightarrow\R^n$ is \emph{tangential} to $M$ if
$T(x)\in T_x M$ for every $x\in M$.
We say that the vector field $T$ is \emph{normal} to $M$ if
$T(x)\perp T_x M$ for every $x\in M$.
\end{definition}

\begin{definition}\label{def:curv}
Given a $\mathcal C^2$ hypersurface  without boundary $M \subset \mathbb R^n$ and a unit normal vector field $\nu_M: M \to \mathbb S^{n-1}$, there exists a normal vector field $\textbf{H}_M\in \mathcal C^0(M, \R^n)$ such that
\begin{equation}\label{def:H_M}
 \int_M \nabla^M \phi \ \mathrm d \mathcal H^{n-1} = \int_M \phi \ \textbf{H}_M \mathrm d \mathcal H^{n-1}
 \end{equation}
for every $\phi \in \mathcal C^1_c (\mathbb R^n)$.
$\textbf{H}_M$ is called the \textit{mean curvature vector field} of $M$. Up to the orientation choice, this defines the scalar mean curvature $H_M$ through $$H_M \nu_M:=\textbf{H}_M.$$
\end{definition}

\begin{definition}\label{def:tangdiv}
Given a $\mathcal C^2$ hypersurface  without boundary $M \subset \mathbb R^n$ and a vector field $T \in \mathcal C^1_c(\mathbb R^n, \mathbb R^n)$ we define the \textit{tangential divergence of $T$ on M} by 
$$ \text{div}^M T := \text{div } T - (\nabla T  \nu_M) \cdot \nu_M = \text{tr}( \nabla^M T).$$
This provides another formulation of \eqref{def:H_M} as
\begin{equation}\label{def:tangdivformula}
 \int_M  \text{div}^M T \ \mathrm d \mathcal H^{n-1} = \int_M T \cdot \textbf{H}_M \mathrm d \mathcal H^{n-1}
 \end{equation}
for all $T\in \mathcal C^1_c (\mathbb R^n, \mathbb R^n)$.
\end{definition}

Choosing $T= \nu_M$ in \eqref{def:tangdivformula} and localizing around any point of $M$ we obtain the well known relation
\begin{equation*}
\text{div}^M (\nu_M) = H_M.
\end{equation*}

We adopt the convention of outward normal derivatives so that balls have positive curvature.
Finally, we recall the product formula for the divergence of \textit{tangential} vector fields.

\begin{proposition}
Under the assumptions of Definition \ref{def:tangdiv}, if
$T\in\mathcal{C}^1_c(\R^n,\R^n)$ is \textit{tangential}, that is $T(x) \in T_x M$ at all points, then for all $\phi \in \mathcal C^1(\mathbb R^n)$
\begin{equation}\label{tangdivprod}
\text{div}^M (\phi T) = \phi \ \text{div}^M T + \nabla^M  \phi\cdot T\,.
\end{equation}
This yields the integration by parts formula
\begin{equation}\label{ibpsurface}
 \int_M  \phi \ \text{div}^M T \ \mathrm d \mathcal H^{n-1} = - \int_M \nabla^M \phi \cdot T \mathrm d \mathcal H^{n-1}\,.
\end{equation}
\end{proposition}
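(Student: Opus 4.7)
The plan is to derive the product formula by straightforward manipulation from the Euclidean Leibniz rule and the definition of $\text{div}^M$, and then obtain the integration by parts identity by integrating the product formula and invoking \eqref{def:tangdivformula}.

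For the first identity, I start from the standard Euclidean product rule $\text{div}(\phi T)=\phi\,\text{div}\,T+\nabla\phi\cdot T$, and subtract the normal-normal component of the full gradient. Writing $\nabla(\phi T)=T\otimes\nabla\phi+\phi\nabla T$, I compute
\[
\bigl(\nabla(\phi T)\nu_M\bigr)\cdot\nu_M=(T\cdot\nu_M)(\nabla\phi\cdot\nu_M)+\phi\,(\nabla T\,\nu_M)\cdot\nu_M.
\]
Here is the only place where tangentiality enters: since $T(x)\in T_xM$, we have $T\cdot\nu_M=0$, and the first term drops out. Substituting into the definition $\text{div}^M(\phi T)=\text{div}(\phi T)-(\nabla(\phi T)\nu_M)\cdot\nu_M$ immediately gives $\text{div}^M(\phi T)=\phi\,\text{div}^M T+\nabla\phi\cdot T$. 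To conclude I use that the decomposition $\nabla\phi=\nabla^M\phi+(\nabla\phi\cdot\nu_M)\nu_M$, combined once more with $T\cdot\nu_M=0$, yields $\nabla\phi\cdot T=\nabla^M\phi\cdot T$, proving \eqref{tangdivprod}.

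For the integration by parts formula \eqref{ibpsurface}, I note first that $\phi T\in\mathcal{C}^1_c(\R^n,\R^n)$ since $T$ is compactly supported and $\phi$ is $\mathcal{C}^1$, so formula \eqref{def:tangdivformula} applies to $\phi T$:
\[
\int_M \text{div}^M(\phi T)\,\d\H^{n-1}=\int_M (\phi T)\cdot\mathbf{H}_M\,\d\H^{n-1}.
\]
The mean curvature vector $\mathbf{H}_M=H_M\nu_M$ is normal to $M$, while $T$ is tangential, hence $(\phi T)\cdot\mathbf{H}_M\equiv 0$. Integrating the product rule just proved, the right hand side splits as $\int_M \phi\,\text{div}^M T\,\d\H^{n-1}+\int_M\nabla^M\phi\cdot T\,\d\H^{n-1}=0$, which is exactly \eqref{ibpsurface}.

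There is no real analytic obstacle here; the whole proof is a bookkeeping exercise built around the single algebraic fact $T\cdot\nu_M=0$. The one point that requires mild care is ensuring the regularity needed to invoke \eqref{def:tangdivformula}, namely that $\phi T$ inherits compact support from $T$, so that no boundary contributions appear when applying the divergence theorem on $M$ (which is assumed without boundary in Definition \ref{def:curv}).
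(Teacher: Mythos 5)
Your proof is correct: the product formula follows exactly as you say from the Euclidean Leibniz rule, the definition $\mathrm{div}^M T=\mathrm{div}\,T-(\nabla T\,\nu_M)\cdot\nu_M$, and the single fact $T\cdot\nu_M=0$ (which also lets you replace $\nabla\phi\cdot T$ by $\nabla^M\phi\cdot T$), and the integration by parts identity then follows by applying \eqref{def:tangdivformula} to $\phi T\in\mathcal{C}^1_c(\R^n,\R^n)$ and using that $\mathbf{H}_M$ is normal while $T$ is tangential. The paper states this proposition without proof, as a standard preliminary recalled from its references, and your argument is precisely the standard one it implicitly relies on, including the care you take that $\phi T$ inherits compact support and that $M$ has no boundary.
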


\subsection{Radon measures}
Finally, we recall some basic properties of Radon measures that we will use in Section~\ref{sec:relax}.
For a reference see, for instance, \cite[Section 1.4]{AFP}, \cite[Section 2]{Maggi}.

\begin{definition}
\label{def:weakstar}
We denote by $\mathcal{M}^+_{loc}(\R^n)$ the space of locally finite non-negative Radon measures.
we say that a sequence $(\mu_k)_{k\in  \mathbb N} \subset \mathcal{M}^+_{loc}(\mathbb R^n)$ is \emph{locally weakly*-converging} to $\mu\in \mathcal{M}^+_{loc}(\R^n)$ if
\[
\lim_{k\to +\infty} \int_{\mathbb R^n} \phi \mathrm \  d\mu_k = \int_{\mathbb R^n} \phi \mathrm \  d\mu
\]
for every $\phi  \in \mathcal C_c(\mathbb R^n)$.
In this case, we will write $\mu_k \wt \mu$. 
\end{definition}

A useful continuity property for sequences of locally weakly*-convergent measures is the following.

\begin{lemma}
Let $(\mu_k)_{k\in  \mathbb N} \subset \mathcal{M}^+_{loc}(\mathbb R^n)$, $\mu\in\mathcal{M}^+_{loc}(\R^n)$ be such that $\mu_k \wt \mu$. Then
\begin{equation}\label{borelconv}
\lim_{k\to +\infty} \mu_k(E) = \mu(E)\,,
\end{equation}
for all bounded Borel sets $E \subset \mathbb R^n$ for which $\mu(\pa E) = 0$.
In particular, for any $x\in \mathbb R^n$ it holds that
\begin{equation}\label{foliation}
\lim_{k\to +\infty}\mu_k(B_r(x))=\mu( B_r(x))\,,
\end{equation}
for all but countably many $r>0$.
\end{lemma}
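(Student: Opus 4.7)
The plan is to establish the two usual one-sided inequalities that are available for locally weakly*-convergent non-negative Radon measures, namely
\[
\limsup_{k\to\infty}\mu_k(K)\le \mu(K)\quad\text{for every compact }K\subset\R^n,
\]
\[
\liminf_{k\to\infty}\mu_k(U)\ge \mu(U)\quad\text{for every bounded open }U\subset\R^n,
\]
and then sandwich $\mu_k(E)$ between the two in the regime $\mu(\pa E)=0$.

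For the first inequality, fix a compact set $K$ and, for $\eta>0$, choose by Urysohn's lemma a function $\phi_\eta\in\mathcal{C}_c(\R^n)$ with $\mathbbm{1}_K\le\phi_\eta\le\mathbbm{1}_{K^\eta}$, where $K^\eta$ is the open $\eta$-neighborhood of $K$. Then $\mu_k(K)\le\int\phi_\eta\,d\mu_k$, which by local weak* convergence yields $\limsup_k\mu_k(K)\le\int\phi_\eta\,d\mu\le\mu(K^\eta)$. Letting $\eta\downarrow 0$ and using outer regularity (equivalently, monotone convergence since $K^\eta\downarrow K$ in measure) gives the first bound. For the second inequality, with $U$ open and bounded fix any compactly supported $\phi\in\mathcal{C}_c(U)$ with $0\le\phi\le 1$; then $\int\phi\,d\mu_k\le\mu_k(U)$, so $\mu(U)=\sup\{\int\phi\,d\mu\}\le\liminf_k\mu_k(U)$ by taking the supremum over such $\phi$ (inner regularity of $\mu$ on open sets).

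With these two tools in hand, for a bounded Borel set $E$ with $\mu(\pa E)=0$, let $\INT(E)$ and $\ov{E}$ denote its interior and closure, so that $\ov{E}\setminus\INT(E)\subset\pa E$ and in particular $\mu(\INT(E))=\mu(E)=\mu(\ov{E})$. Applying the two inequalities to the open set $\INT(E)$ and the compact set $\ov{E}$ we obtain
\[
\mu(E)=\mu(\INT(E))\le \liminf_{k\to\infty}\mu_k(\INT(E))\le\liminf_{k\to\infty}\mu_k(E)\le\limsup_{k\to\infty}\mu_k(E)\le\limsup_{k\to\infty}\mu_k(\ov{E})\le\mu(\ov{E})=\mu(E),
\]
which is \eqref{borelconv}.

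For the consequence \eqref{foliation}, fix $x\in\R^n$ and $R>0$. The boundaries $\pa B_r(x)$ are pairwise disjoint spheres, and for $r\in(0,R]$ one has $\pa B_r(x)\subset B_{R+1}(x)$, a set of finite $\mu$-measure. Hence at most countably many of the quantities $\mu(\pa B_r(x))$ can be positive, because for every $\eta>0$ the set $\{r\in(0,R]:\mu(\pa B_r(x))>\eta\}$ is finite. Applying \eqref{borelconv} to every $r>0$ outside this countable set concludes the proof. The only slightly delicate step is the first inequality, where one must be careful about locally (rather than globally) finite measures and produce the cut-off $\phi_\eta$ with the right support.
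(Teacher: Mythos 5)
The paper states this lemma without proof, citing it as a standard continuity property of locally weakly*-convergent measures (it is the Portmanteau theorem specialized to Radon measures on $\R^n$), so there is no in-paper argument to compare against. Your proposal is a correct and complete rendition of the standard argument: the two one-sided Portmanteau inequalities for compact and open sets, followed by the sandwich $\mu(\INT E)\le\liminf_k\mu_k(E)\le\limsup_k\mu_k(E)\le\mu(\ov E)$ under the null-boundary hypothesis, and then the observation that only countably many concentric spheres can carry positive $\mu$-mass. Two small remarks that you handled correctly but are worth making explicit: the passage $\mu(K^\eta)\to\mu(K)$ uses continuity from above and hence needs $\mu(K^{\eta_0})<\infty$ for some $\eta_0$, which holds because $K^{\eta_0}$ has compact closure and $\mu$ is locally finite; and the inner-regularity identity $\mu(U)=\sup\bigl\{\int\phi\,\mathrm{d}\mu:\phi\in\mathcal{C}_c(U),\ 0\le\phi\le1\bigr\}$ for an open set $U$ is the standard Riesz--Radon fact and does not require $U$ to have finite measure. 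Both are fine, and the proof stands.
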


The following compactness result for finite Radon measures holds.

\begin{lemma}\label{weak*comp}
Let $(\mu_k)_{k\in  \mathbb N} \subset \mathcal{M}^+_{loc}(\mathbb R^n)$ be such that
\begin{equation*}
\sup_{k\in  \mathbb N} \mu_k(\R^n) < \infty\,.
\end{equation*}
Then there exists a subsequence of $(\mu_k)_{k\in  \mathbb N}$
that locally weakly*-converges to some $\mu \in \mathcal{M}^+_{loc}(\mathbb R^n)$.
\end{lemma}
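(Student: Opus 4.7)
The plan is to invoke the Banach--Alaoglu theorem in the Banach space $C_0(\R^n)$ of continuous functions vanishing at infinity. Since each $\mu_k$ is a non-negative Radon measure with $\mu_k(\R^n)\le M$ uniformly for some constant $M$, by the Riesz representation theorem each $\mu_k$ defines a bounded linear functional on $C_0(\R^n)$ via $\phi\mapsto \int_{\R^n}\phi\,\d\mu_k$, with dual norm equal to the total mass $\mu_k(\R^n)\le M$. Thus $(\mu_k)$ is a norm-bounded sequence in the topological dual $C_0(\R^n)^*$.

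Next, I would use that $C_0(\R^n)$ is a \emph{separable} Banach space, since $\R^n$ is second countable. The sequential Banach--Alaoglu theorem then provides a subsequence, still denoted $(\mu_k)$, and a functional $T\in C_0(\R^n)^*$ such that $\int_{\R^n}\phi\,\d\mu_k\to T(\phi)$ for every $\phi\in C_0(\R^n)$, in particular for every $\phi\in C_c(\R^n)$. It remains to identify $T$ with a measure in $\mathcal{M}^+_{\loc}(\R^n)$: for every non-negative $\phi\in C_c(\R^n)$ we have $T(\phi)=\lim_k\int_{\R^n}\phi\,\d\mu_k\ge 0$, so $T$ is a positive linear functional on $C_c(\R^n)$, and the Riesz--Markov representation theorem then yields a unique $\mu\in\mathcal{M}^+_{\loc}(\R^n)$ with $T(\phi)=\int_{\R^n}\phi\,\d\mu$ for all $\phi\in C_c(\R^n)$. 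This is precisely the local weak-$*$ convergence of Definition~\ref{def:weakstar}.

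There is no really hard step here; the main point is packaging the arguments in a way that avoids confusing the topologies on $C_c$ and $C_0$. An equivalent hands-on alternative, avoiding Banach--Alaoglu, is to fix a countable dense family $\{\phi_j\}\subset C_c(\R^n)$ (obtained, e.g., by covering $\R^n$ by balls $B_R(0)$ and using separability of $C(\ov{B_R(0)})$), note that for each $j$ the sequence $\int \phi_j\,\d\mu_k$ is bounded by $M\|\phi_j\|_\infty$, extract by a diagonal argument a subsequence along which $\int\phi_j\,\d\mu_k$ converges for every $j$, and extend the limit functional to all of $C_c(\R^n)$ by an $\varepsilon/3$ argument using the uniform bound $\bigl|\int\phi\,\d\mu_k\bigr|\le M\|\phi\|_\infty$. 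Riesz--Markov again represents the extension as an integral against some $\mu\in\mathcal{M}^+_{\loc}(\R^n)$, concluding the proof.
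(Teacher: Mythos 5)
Your proof is correct: the uniform mass bound makes $(\mu_k)$ a norm-bounded sequence in $C_0(\R^n)^*$, separability of $C_0(\R^n)$ gives sequential weak-$*$ compactness, and positivity plus Riesz--Markov identifies the limit with a measure in $\mathcal{M}^+_{loc}(\R^n)$, which is exactly the convergence of Definition \ref{def:weakstar}. The paper does not prove this lemma at all --- it is recalled as a standard compactness fact with references to \cite{AFP} and \cite{Maggi} --- and both your Banach--Alaoglu argument and your diagonal-extraction alternative are precisely the standard proofs found there, so there is nothing to reconcile.
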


Finally, we recall that the space $\mathcal{M}^+_{loc}(\R^n)$ is a (separable) metric space
(for a proof, see, for instance, \cite[Proposition 2.6]{DeLellis}).

\begin{proposition}\label{prop:weakstarmetriz}
The weak*-convergence on $\mathcal{M}^+_{loc}(\R^n)$ is metrizable by a distance that we will denote $d_{\mathcal M}$. In particular, it holds that
\[
\mu_k\wt\mu\quad\Leftrightarrow\quad
\lim_{k\rightarrow\infty}d_{\mathcal M}(\mu_k,\mu)=0\
\]
\end{proposition}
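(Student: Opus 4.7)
The plan is to construct $d_{\mathcal{M}}$ from a countable family of test functions dense in $C_c(\R^n)$ under local uniform convergence, exploiting that weak*-convergence on $\mathcal{M}^+_{\loc}(\R^n)$ is probed exactly by $C_c(\R^n)$ and that this space is separable in the relevant sense. The construction is standard, so the work really lies in recovering the local uniform mass bounds needed to pass from the countable dense family back to arbitrary test functions.

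First I would fix the compact exhaustion $K_j := \ov{B_j(0)}$, select cutoffs $\eta_j \in C_c(B_{j+1}(0))$ with $\eta_j \equiv 1$ on $K_j$ and $0 \leq \eta_j \leq 1$, and for each $j\in\N$ pick a countable family $\{\vphi_{j,k}\}_{k\in\N} \subset C_c(B_{j+1}(0))$ dense under the sup norm (possible by separability of $C_c(\R^n)$ under local uniform convergence). Enumerating this doubly-indexed family together with all the $\eta_j$'s into a single sequence $\{\vphi_n\}_{n\in\N}$, I would set
\[
d_{\mathcal{M}}(\mu,\nu) := \sum_{n=1}^\infty \frac{1}{2^n(1+\|\vphi_n\|_\infty)}\, \frac{\left|\int \vphi_n \d\mu - \int \vphi_n \d\nu\right|}{1 + \left|\int \vphi_n \d\mu - \int \vphi_n \d\nu\right|}.
\]
Symmetry and the triangle inequality follow from the concavity of $t\mapsto t/(1+t)$; non-degeneracy holds because $d_{\mathcal{M}}(\mu,\nu)=0$ forces $\int \vphi_n\d\mu = \int \vphi_n\d\nu$ for every $n$, hence for every $\vphi \in C_c(\R^n)$ by density, so $\mu=\nu$ via the Riesz representation theorem.

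The forward implication of the claimed equivalence is immediate: if $\mu_k \wt \mu$, then each summand in the series tends to $0$, the whole series is dominated by $\sum 2^{-n}$, and dominated convergence yields $d_{\mathcal{M}}(\mu_k,\mu)\to 0$. For the converse, $d_{\mathcal{M}}(\mu_k,\mu) \to 0$ implies $\int\vphi_n\d\mu_k \to \int\vphi_n\d\mu$ for every $n$; in particular $\int\eta_j\d\mu_k \to \int\eta_j\d\mu$, so
\[
\sup_k \mu_k(K_j) \leq \sup_k \int \eta_j \d\mu_k < \infty
\]
for each $j$, which is the desired local uniform mass bound.

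The main obstacle, extending pointwise convergence from the countable family to an arbitrary $\vphi \in C_c(\R^n)$, then dissolves into a routine $\varepsilon/3$-argument using this bound. Given $\vphi \in C_c(\R^n)$ with $\spt\vphi \subset B_j(0)$ and $\varepsilon>0$, I would pick $\vphi_n$ with $\spt\vphi_n \subset B_{j+1}(0)$ and $\|\vphi - \vphi_n\|_\infty < \varepsilon$, and estimate
\[
\left|\int \vphi\,\d(\mu_k - \mu)\right| \leq \|\vphi - \vphi_n\|_\infty \bigl(\mu_k(K_{j+1}) + \mu(K_{j+1})\bigr) + \left|\int \vphi_n\,\d(\mu_k - \mu)\right|.
\]
Letting $k\to\infty$ first and then $\varepsilon\to 0$ (the first term being uniformly controlled thanks to the mass bound) yields $\int\vphi\d\mu_k\to\int\vphi\d\mu$, completing the proof of metrizability.
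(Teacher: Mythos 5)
The paper does not actually prove this proposition: it is quoted from the literature (the citation of De Lellis, Proposition 2.6, immediately before the statement), so there is no argument in the paper to compare yours with. Your construction is the standard one and it is correct: a countable sup-norm-dense family of test functions supported in the balls $B_{j+1}(0)$, augmented by the cutoffs $\eta_j$, fed into the usual series metric; the inclusion of the cutoffs is exactly what recovers the local uniform mass bounds $\sup_k \mu_k(K_j)<\infty$ (positivity of the measures is essential here, and you use it), and the $\varepsilon/3$ estimate then upgrades convergence on the countable family to convergence against every $\vphi\in C_c(\R^n)$. Nondegeneracy via density plus uniqueness in the Riesz representation theorem is also fine, since local finiteness gives $\mu(B_{j+1})<\infty$. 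The one caveat worth recording is that what you establish is the equivalence of weak*-convergence of \emph{sequences} with $d_{\mathcal M}$-convergence, i.e. exactly the ``In particular'' clause; this is all the paper ever uses (in the definition of $d_{\mathfrak S}$ and in the quantitative closeness estimates of Section 4), whereas identifying the metric topology with the vague topology itself would require a further, non-sequential argument that neither the statement as used nor your proof needs.
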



\section{The constrained minimization problem}

Throughout the paper we will assume the following.

  \subsection{Setting}
  
\begin{definition}\label{def:admipsi}
Let $\psi:\R_+\rightarrow(0,+\infty)$, be convex and $\mathcal{C}^1$ with
\[
0<\psi(0)<\psi(s)
\]
for every $s>0$. 
\label{def:energy}
We define the energy functional
\[
\F(E,u):=\int_{\pared E} \psi(u) \d \H^{n-1}\,,
\]
where $E\subset\R^n$ is a set of finite perimeter and $u\in L^1(\pared E,\R_+)$ is a Borel function. Here the space $L^1(\pared E,\R_+)$ is meant with respect to the $\H^{n-1}\restr\pared E$ measure.
\end{definition}    

We are interested in studying the optimal shapes and adatom distributions (the function $u$) under a total mass constraint.

\begin{definition}
For $m>0$, define
\begin{equation}\label{eq:pb}
\gamma_m:=\inf\{\, \F(E,u) \,:\, (E,u)\in \mathrm{Cl}(m) \,\}\,,
\end{equation}
where
\[
\mathrm{Cl}(m):=\left\{\, (E,u) \,:\,  E \text{ set of finite perimeter}, \ u\in L^1(\pared E,\R_+)\,, \
    \mathcal J(E,u)=m \,\right\}\,,
\]
and
\begin{equation*}
\mathcal J(E,u):=\rho |E| + \int_{\pared E} u \d\H^{n-1}\,.
\end{equation*}
Here $\rho > 0$ is a constant that denotes the volumetric mass density of the crystal.
\end{definition}


\subsection{Critical points}

We start our investigation by studying the properties of critical points of the energy.
To this aim we need to perform variations of a given couple $(E,u)\in\mathrm{Cl}(m)$ that satisfies the constraint.
We show in Appendix \ref{cstmasscurve} that it is enough to consider variations that preserve the constraint only at the first order (see Remark \ref{rem:just}).

\begin{definition}\label{def:av}
Let $(E,u)\in\mathrm{Cl}(m)$ with $E$ a bounded set of class $\mathcal{C}^3$ and $u(x)\geq\tau$ for $\H^{n-1}$-a.e. $x\in\partial E$, for some $\tau>0$. We define the set of \textit{admissible velocities} for $(E,u)$ as
\[
\mathrm{Ad}(E,u):=\left\{ (v,w)\in \mathcal{C}^1_b(\pa E)\times \mathcal{C}^1_b(\pa E) :
	\int_{\pa E} [w+v(uH_{\pa E}+\rho)]\d\H^{n-1}=0\right\}
\]
where $\mathcal C^1_b$ means $\mathcal C^1$ and bounded functions and $H_{\pa E}$ is given in Definition \ref{def:curv}.
\end{definition}

By using the above admissible velocities, it is possible to derive the Euler-Lagrange equations for $\F$. For that, we will need to apply Lebesgue's dominated convergence theorem and thus make use of the following technical growth assumption.

\begin{itemize}
\item [(H)] There exists $p\geq 1$ and $A, B > 0$ such that $$\psi(s),\, \psi'(s) \leq A + Bs^p \,\, \text{ for all } s\geq 0$$ and $u \in L^p(\pa E,\R_+)$.
 \label{growthass}
\end{itemize}

\begin{proposition}\label{prop:EL}
Let $(E,u)\in\mathrm{Cl}(m)$ be as in the previous definition and let $(v,w)\in \mathrm{Ad}(E,u)$.
Assume moreover that $(H)$ holds.
Then, the first variation of the functional $\F$ computed at $(E,u)$ with respect to the variations \eqref{eq:vare} and \eqref{eq:varu} is given by
\begin{equation*}
\frac{d}{d t}\Big{|}_{t=0} \F(E_t,u_t) = \int_{\pa E} [\,\psi'(u) w+\psi(u) v H_{\pa E}\,] \d \H^{n-1}\,.
\end{equation*}
\end{proposition}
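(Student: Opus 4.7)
The plan is to pull back the surface integral over $\partial E_t$ to the fixed surface $\partial E$ via the area formula, and then to differentiate under the integral sign using the growth hypothesis (H) for dominated convergence.

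I would first write $E_t = \Phi_t(E)$, where $\Phi_t$ is the flow associated with the normal variation \eqref{eq:vare}, so that $X := \partial_t\big|_{t=0}\Phi_t = v\nu_{\partial E}$ on $\partial E$. The change-of-variables formula \eqref{eq:changeofvariable} yields
\[
\F(E_t,u_t) = \int_{\partial E} \psi\bigl((u_t\circ\Phi_t)(x)\bigr)\, J^{\partial E}\Phi_t(x)\, \mathrm{d}\H^{n-1}(x),
\]
with $J^{\partial E}\Phi_0 \equiv 1$ and $u_0\circ\Phi_0 = u$. The definition \eqref{eq:varu} is set up so that $w = \partial_t\big|_{t=0}(u_t\circ\Phi_t)$; a sanity check is that the linearized constraint $\int_{\partial E}[w + v(uH_{\partial E}+\rho)]\,\mathrm{d}\H^{n-1}=0$ appearing in $\mathrm{Ad}(E,u)$ is recovered by running the very same calculation on $\mathcal J(E_t,u_t)$ in place of $\F(E_t,u_t)$.

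Next, I would pass the derivative inside the integral. Hypothesis (H) provides bounds $\psi(s),\psi'(s)\leq A+Bs^p$ with $u\in L^p$; combined with the $\mathcal{C}^1$ smoothness in $t$ of $\Phi_t$ and $u_t$ and the boundedness of $v$ and $w$, this yields an $L^1(\partial E)$ dominant for the difference quotients in $t$, and therefore
\[
\frac{d}{dt}\Big|_{t=0}\F(E_t,u_t) = \int_{\partial E}\Bigl[ \psi'(u)\, w + \psi(u)\, \partial_t\big|_{t=0} J^{\partial E}\Phi_t\Bigr]\, \mathrm{d}\H^{n-1}.
\]
To finish, I invoke the classical tangential Jacobi identity $\partial_t\big|_{t=0} J^{\partial E}\Phi_t = \mathrm{div}^{\partial E} X$; then for the purely normal field $X=v\nu_{\partial E}$ I compute, using Definition \ref{def:tangdiv} and the product rule,
\[
\mathrm{div}^{\partial E}(v\nu_{\partial E}) = v\,\mathrm{div}^{\partial E}\nu_{\partial E} + \nabla^{\partial E}v\cdot \nu_{\partial E} = v H_{\partial E},
\]
since $\nabla^{\partial E}v\in T_x\partial E$ is orthogonal to $\nu_{\partial E}$ and $\mathrm{div}^{\partial E}\nu_{\partial E}=H_{\partial E}$ from the remark following Definition \ref{def:tangdiv}. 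Substituting produces the claimed expression.

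The main obstacle I anticipate is not the geometric computation but the integrability bookkeeping: because $u$ need only lie in $L^p$ and may be unbounded, constructing an $L^1$-dominant for the difference quotients $t^{-1}\bigl[\psi(u_t\circ\Phi_t)\,J^{\partial E}\Phi_t - \psi(u)\bigr]$ requires combining the bound $\psi'(s)\leq A+Bs^p$ with a mean value argument on a perturbation of $u$ of order $t\|w\|_\infty$. Once this $L^1$-dominant is secured, everything else is a direct consequence of the area formula and the tangential calculus recalled in Section~2.
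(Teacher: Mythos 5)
Your proof is correct and takes essentially the same route as the paper: pull back via the change-of-variables formula, identify $\partial_t|_{t=0}(u_t\circ\Phi_t)=w$ and $\partial_t|_{t=0}J^{\partial E}\Phi_t=\mathrm{div}^{\partial E}(v\nu_E)=vH_{\partial E}$, and justify differentiation under the integral with dominated convergence via (H). The only cosmetic difference is that you spell out the identity $\mathrm{div}^{\partial E}(v\nu_E)=vH_{\partial E}$ rather than citing it directly.
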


The main result of this section is a characterization of the regular critical points. This extends a result proved in \cite{Burger} by using the evolution equation. Here, we use the Euler-Lagrange equations.

\begin{proposition}\label{prop:charregcrit}
Let $(E,u)\in\mathrm{Cl}(m)$ be a regular critical point for $\F$, \textit{i.e.}, $(E,u)$ is as in Definition \ref{def:av} and satisfies 
\begin{equation}\label{eq:crit}
\int_{\pa E} [\,\psi'(u) w+\psi(u) v H_{\pa E} \,] \d \H^{n-1}=0 \quad\, \text{ for all } (v,w)\in \mathrm{Ad}(E,u).
\end{equation}
Assume that $\psi$ is strictly convex. Then $E$ is a finite disjoint union of balls $\bigcup_{i=1}^m B_i$ with same curvature $H_{\pa B}$ and $u$ is a constant $c$ such that
\begin{equation}\label{cstcritpointsvalue}
(\psi(c)-c\psi'(c))H_{\pa B} = \rho \psi'(c)\,.
\end{equation}
Conversely, any such $(\bigcup_{i=1}^m B_i,c)$ is a regular critical point.

Finally, if $(\bigcup_{i=1}^m B_i,c)$ is a regular critical point, then (see Remark \ref{recipe})
\begin{equation*}
0 < c < s_0 :=\sup\{s\in \R_+ \ : \ \psi(s)-\psi'(s) s  > 0\}\,.
\end{equation*}
\end{proposition}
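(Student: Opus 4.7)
The plan is a classical Lagrange-multiplier argument together with Alexandrov's theorem. First, I would upgrade \eqref{eq:crit} from ``$(v,w)\in\mathrm{Ad}(E,u)$'' to ``$(v,w)\in\C^1_b(\pa E)\times\C^1_b(\pa E)$'' at the cost of a scalar multiplier $\l\in\R$. The constraint defining $\mathrm{Ad}(E,u)$ is a single linear equation, so fixing any pair $(v_0,w_0)$ on which it evaluates to $c_0\neq 0$ (for instance $v_0\equiv 0$, $w_0\equiv 1$, giving $c_0=\H^{n-1}(\pa E)>0$) lets one decompose an arbitrary $(v,w)$ as an element of $\mathrm{Ad}(E,u)$ plus a scalar multiple of $(v_0,w_0)$; applying \eqref{eq:crit} to the $\mathrm{Ad}$ part yields
\begin{equation*}
\int_{\pa E}[\psi'(u)w+\psi(u)vH_{\pa E}]\d\H^{n-1}=\l\int_{\pa E}[w+v(uH_{\pa E}+\rho)]\d\H^{n-1}
\end{equation*}
for every $(v,w)\in\C^1_b(\pa E)^2$, where $\l$ is determined by the values at $(v_0,w_0)$.

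Second, I would specialize to $v\equiv 0$ with $w$ arbitrary: the fundamental lemma of the calculus of variations then forces $\psi'(u)=\l$ $\H^{n-1}$-a.e.\ on $\pa E$. Strict convexity of $\psi$ makes $\psi'$ strictly monotone, hence injective, so $u\equiv c$ is constant and $\l=\psi'(c)$. Plugging this back with $w\equiv 0$ and $v$ arbitrary yields
\begin{equation*}
[\psi(c)-c\psi'(c)]H_{\pa E}=\rho\psi'(c)\quad\text{on }\pa E,
\end{equation*}
which is exactly \eqref{cstcritpointsvalue} and, crucially, says that $H_{\pa E}$ is constant on each connected component of $\pa E$.

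Third, since $\pa E$ is a compact $\C^3$ embedded hypersurface with constant mean curvature on every component, Alexandrov's soap-bubble theorem identifies each component with a round sphere; the common value of $H_{\pa E}$ forces all the radii to agree, whence $E=\bigcup_{i=1}^{m}B_i$ as claimed. The converse is a direct verification: for $(\bigcup B_i,c)$ solving \eqref{cstcritpointsvalue} with constant $H_{\pa B}=(n-1)/r$, the integrand in \eqref{eq:crit} equals $\psi'(c)$ times the admissibility integrand, so it vanishes on $\mathrm{Ad}(E,u)$. Finally, $c>0$ is inherited from the lower bound $u\geq\tau>0$ imposed in Definition \ref{def:av}; since $H_{\pa B}>0$ and $\rho>0$, the curvature equation forces $\psi'(c)>0$ (otherwise $\psi'$ would vanish on $[0,c]$, making $\psi$ constant there and contradicting $\psi(0)<\psi(c)$), and then $\psi(c)-c\psi'(c)>0$, i.e.\ $c<s_0$ by definition of $s_0$.

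The only conceptually nontrivial ingredient is the appeal to Alexandrov's theorem; once that is granted, the rest is a careful choice of test functions and bookkeeping of the two-variable Lagrange multiplier, which is why the strict convexity hypothesis is essential (to promote the pointwise identity $\psi'(u)=\l$ into the statement that $u$ is constant).
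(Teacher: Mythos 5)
Your proof is correct, and it takes a genuinely different route from the paper's. The paper's Step one tests $\F$ against pairs $(0,\dive^{\pa E}T)$ with $T$ a tangential vector field; integration by parts shows such pairs lie in $\mathrm{Ad}(E,u)$ automatically, so one deduces $\nabla^{\pa E}\psi'(u)=0$ and hence $u$ constant \emph{on each connected component}. A separate Step three then tests against velocities supported on pairs of components to match the constants and the curvatures across components. Your Lagrange-multiplier decomposition $(v,w)=(\tilde v,\tilde w)+s(0,1)$ short-circuits both of those steps: by removing the constraint at the cost of the scalar $\l=\frac{1}{\H^{n-1}(\pa E)}\int_{\pa E}\psi'(u)\,\d\H^{n-1}$, you get $\psi'(u)=\l$ \emph{globally} in one stroke, which kills the paper's entire two-component bookkeeping. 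Your verification of the converse is also slicker, since you observe that \eqref{cstcritpointsvalue} makes the first-variation integrand exactly $\psi'(c)$ times the admissibility integrand, rather than recomputing the integral directly. The one spot to tighten is order of presentation: to pass from $[\psi(c)-c\psi'(c)]H_{\pa E}=\rho\psi'(c)$ a.e.\ to ``$H_{\pa E}$ is constant'' you must first rule out $\psi(c)-c\psi'(c)=0$ (which, as you do observe later, would force $\psi'(c)=0$ and then $\psi(c)=0$, impossible); in your writeup this nondegeneracy is deferred to the final paragraph, so the intermediate claim is briefly circular as stated, though the underlying logic is sound.
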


\begin{remark}\label{rem:just}
In order to justify our definition of admissible variations, we argue as follows: take a bounded set $E$ of class $\mathcal{C}^2$
and denote by $\nu_E$ the exterior normal to $E$ on $\partial E$.
Let us denote by $\mathrm{d}(y,\pa E)$ the \emph{distance} of a point $y\in\R^n$ from $\partial E$.
It is well known (see \cite[Section 14.6]{GT}) that it is possible to find $\delta>0$ such that for every point $z$ in the set
\[
(\pa E)_\delta:=\{\, y\in\R^n \,:\, \mathrm{d}(y, \partial E)<\delta \,\}
\]
there exists a unique $\Pi(z)\in\pa E$ such that $\mathrm{d}(z,\pa E)=|z-\Pi(z)|$.
In particular, the \emph{projection} map $\Pi:(\pa E)_\delta\rightarrow\pa E$ is of class $C^1$ and it is possible to write
any $z\in(\pa E)_\delta$ as
\begin{equation}\label{eq:z}
z=\Pi(z)+\mathrm{d}(z,\pa E)\nu_E(x)\,.
\end{equation}
Then, consider the extension of the exterior normal to $(\partial E)_\delta$
given by (with an abuse of notation we make use of the same symbol)
\[
\nu_E(z):=\nu_E(x)\,,
\]
where $z\in (\partial E)_\delta$ is written as in \eqref{eq:z}. The above extension is unique and well defined.

Fix a function $\varphi:(-\delta,\delta)\rightarrow\R$ with $0\leq\varphi\leq1$, such that $\varphi\equiv1$ on
$[-\frac{\delta}{4},\frac{\delta}{4}]$ and $\varphi\in \mathcal{C}^\infty_c([-\frac{\delta}{2},\frac{\delta}{2}])$.
Let $v\in \mathcal{C}^1(\partial E)$  and, for
\begin{equation}\label{eq:t}
|t|<\bar{t}:=
\left\{
\begin{array}{ll}
\frac{\delta/2}{\sup_{\partial E}|v|} & \text{ if } v\not\equiv0\,,\\
&\\
+\infty & \text{ otherwise }\,,
\end{array}\right.
\end{equation}
consider the $\mathcal C^1$ diffeomorphism $\Phi_t:\R^n\rightarrow\R^n$ given by
\begin{equation*}
\Phi_t(z):=
\left\{
\begin{array}{ll}
z+t\varphi\left(\mathrm{d}(z,\pa E)\right)v(x)\nu_E(x) & \text{ if } z\in(\partial E)_\delta \text{ as in } \eqref{eq:z}\,,\\
z & \text{ otherwise in } \R^n\,.
\end{array}
\right.
\end{equation*}
Define, for $|t|<\bar{t}$, the variations
\begin{equation}\label{eq:vare}
E_t:=\Phi_t(E)\,.
\end{equation}
Now let $w\in \mathcal{C}^1(\partial E)$ and set $u_t:\partial E_t\rightarrow\R$ as
\begin{equation}\label{eq:varu}
u_t(y):=u(\Phi_t^{-1}(y))+t w(\Phi_t^{-1}(y))\,.
\end{equation}
We want the mass constraint to be satisfied at the first order, \emph{i.e.},
\[
\frac{d}{d t}\Big{|}_{t=0} \mathcal J(E_t,u_t)=0\,.
\]
Moreover, to preserve positivity of $u_t$ without further restricting the admissible velocities, we require $u\geq\tau>0$ on $\partial E$. 
It is well known that (see \cite{Maggi}, Proposition 17.8)
\[
\frac{d}{d t}\Big{|}_{t=0} |E_t|=\frac{d}{d t}\Big{|}_{t=0} |\Phi_t(E)|=\int_{\partial E} \nu_E\cdot \frac{\pa \Phi_t}{\pa t}\Big|_{t=0}  \d \H^{n-1} =  \int_{\partial E} v \d \H^{n-1}.
\]
By the change of variable formula (see \eqref{eq:changeofvariable}) we can write
\begin{align*}
\int_{\pa E_t} u_t(y)\d \H^{n-1}(y)&=\int_{\pa E} u_t(\Phi_t(x))J^{\pa E}\Phi_t(x) \d \H^{n-1}(x)\\
&=\int_{\pa E} [\,u(x)+t w(x) \,]J^{\pa E}\Phi_t(x) \d \H^{n-1}(x)\,,
\end{align*}
where $J^{\pa E}\Phi_t$ is given in Definition \ref{def:tangdiff}.
Using the fact that (see \cite[(17.30)]{Maggi})
\begin{equation}\label{botta}
\frac{d}{dt}\Big{|}_{t=0} J^{\pa E} \Phi_t=\dive^{\partial E}(v \nu_E)=vH_{\pa E}\,,
\end{equation}
we obtain
\[
\frac{d}{d t}\Big{|}_{t=0} \int_{\pa E_t} u_t(y)\d \H^{n-1} = \int_{\pa E} \left[\,w  + u vH_{\pa E}\,\right] \d \H^{n-1}\,,
\]
and thus
\begin{equation*}
\frac{d}{d t}\Big{|}_{t=0} \mathcal J(E_t,u_t)=\int_{\pa E} \left[\,w  + v(u H_{\pa E} + \rho) \,\right] \d \H^{n-1}\,.
\end{equation*}
This justifies our definition of the set of admissible velocities $\mathrm{Ad}(E,u)$:
it can be seen as (part of) the \emph{tangent space} to $\mathrm{Cl}(m)$ at the point $(E,u)$.
\end{remark}

\begin{proof}[Proof of Proposition \ref{prop:EL}]
We have
\begin{align*}
\frac{d}{d t}\Big{|}_{t=0} \F(E_t,u_t)	&=\frac{d}{d t}\Big{|}_{t=0} 
	\int_{\pa E_t} \psi(u_t(y)) \d \H^{n-1}(y)\\
&=\frac{d}{d t}\Big{|}_{t=0} \int_{\pa E} \psi(u_t(\phi_t(x))) J^{\pa 
	E} \phi_t(x) \d \H^{n-1}(x)\\
&=\frac{d}{d t}\Big{|}_{t=0} \int_{\pa E} \psi\left(u(x)+tw(x)\right) J^{\pa 	E} \phi_t(x) \d \H^{n-1}(x)\\
&=\int_{\pa E} \left[\,\psi'\left(u(x)\right) w(x)+\psi\left(u(x)\right) v(x) H_{\pa E}(x) \,\right] \d \H^{n-1}(x)\,,
\end{align*}
where in the last equality we have used \eqref{botta} and Lebesgue's dominated convergence theorem thanks to (H) and the fact that $v, w$ and $H_{\pa E}$ are bounded.
\end{proof}

\begin{proof}[Proof of Proposition \ref{prop:charregcrit}]
We divide the proof in four steps.\\

\textbf{Step one}: \textit{$u$ is constant on each connected component of $\pa E$}. Take a tangential vector field $T\in\mathcal{C}^2_c(\pa E,\R^n)$. Then by \eqref{ibpsurface}, $(v,w):=(0,\dive^{\pa E}(T))\in \mathrm{Ad}(E,u)$. Since $(E,u)$ satisfies \eqref{eq:crit}, using \eqref{ibpsurface} we get
\[
0=\int_{\pa E} \psi'(u) \dive^{\pa E}(T)\d \H^{n-1}\,.
\]
Using a density argument, we see that the above equality holds also for every $T\in\mathcal{C}^1_c(\pa E,\R^n)$.
Using the fact that $T$ is an arbitrary tangential vector field, we conclude that $\nabla^{\pa E} \left(\psi'(u)\right)=0$ on $\pa E$ in the sense of distributions, which implies that $\psi'(u)$ is constant on each connected component of $\pa E$. By the strict convexity of $\psi$, $u$ is constant on each connected component of $\partial E$.\\

\textbf{Step two}: \textit{$H_{\pa E}$ is constant on each connected component of $\pa E$, which are spheres}.
Let $\pa E_i$ be a connected component of $E$. Let $v\in \mathcal{C}^1(\pa E_i)$ and consider the admissible velocities defined as $(v,-v(uH_{\pa E_i}+\rho))$ on $\pa E_i$ and $(0,0)$ on other connected components. Using the fact that $u$ is a constant $c_i$ on $\pa E_i$, by \eqref{eq:crit} we obtain
\begin{align}\label{eq:v}
0 &=-\psi'(c_i)\int_{\pa E_i} v(c_iH_{\pa E_i}+\rho)\d \H^{n-1}
    +\psi(c_i)\int_{\pa E_i} v H_{\pa E_i}  
	\d \H^{n-1}\nonumber \\
&=\left( \psi(c_i)-c_i\psi'(c_i) \right) \int_{\pa E_i} vH_{\pa E_i}\d\H^{n-1}
    - \rho\psi'(c_i)\int_{\pa E_i}v\d\H^{n-1}\,. 
\end{align}
We claim that $\psi(c_i)-c_i\psi'(c_i)\neq0$. Indeed, assume it is zero.
Then, using \eqref{eq:v} with a non-zero average $v$ we have $\psi'(c_i)=0$ and thus $\psi(c_i)=0$, which is impossible, since
$\psi(s)>0$ for all $s\geq0$.
In order to conclude, take $v\in \mathcal{C}^1(\pa E_i)$ with zero average.
Using again \eqref{eq:v}, we get
\[
\left( \psi(c_i)-c_i\psi'(c_i) \right) \int_{\pa E_i} vH_{\pa E_i}\d\H^{n-1}=0\,,
\]
and so
\[
\int_{\pa E_i} vH_{\pa E_i}\d\H^{n-1}=0\,.
\]
Since this is valid for all $v\in \mathcal{C}^1(\pa E_i)$ with zero average, we conclude that $H_{\pa E_i}$ is a constant.
Finally, the fact that we are assuming $\ov{E}$ to be compact allows us to conclude that each connected component of $\pa E$ is a sphere by using Alexandrov's theorem \cite{Alexandrov}. \\

\textbf{Step three}: \textit{connectedness and bounds on $u$}.
Assume that $\pa E$ has at least two connected components that we denote $(\pa E)_1$ and $(\pa E)_2$.
Let $c_1, c_2$ be the values of the adatom density in $(\pa E)_1$ and $(\pa E)_2$ respectively. Moreover, we will denote by $H_1, H_2$ the constant curvature of  $(\pa E)_1$ and $(\pa E)_2$ respectively.
Consider admissible velocities $(v,w)$ that are equal to $(v_1,w_1)$ on  $(\pa E)_1$, $(v_2,w_2)$ on  $(\pa E)_2$ and identically zero on all other connected components. Using the Definition \ref{def:av} and computations similar to the ones of the previous steps, we get
\begin{align}\label{jeanlassalle}
\nonumber\int_{(\pa E)_1} &w_1 \d \H^{n-1} + (c_1 H_1+\rho)   \int_{(\pa E)_1} v_1 \d \H^{n-1}  \\
&\hspace{1cm}=- \int_{(\pa E)_2} w_2 \d \H^{n-1} - (c_2 H_2+\rho)  \int_{(\pa E)_2} v_2 \d \H^{n-1}\,.
\end{align}
Similarly, as $(E,u)$ is critical, using Step 1 and Step 2 above, the criticality condition \eqref{eq:crit} can be written as
\begin{align}\label{philippepoutou}
\nonumber\psi'(c_1) \int_{(\pa E)_1} & w_1 \d \H^{n-1} + H_1 \psi(c_1) \int_{(\pa E)_1}  v_1  \d \H^{n-1} \\ + &\psi'(c_2) \int_{(\pa E)_2}  w_2 \d \H^{n-1} + H_2 \psi(c_2) \int_{(\pa E)_2}  v_2 \d \H^{n-1} = 0
\end{align}
Using \eqref{jeanlassalle} in \eqref{philippepoutou}, we get
\begin{align}\label{eq:3eq}
&\left(\int_{(\pa E)_2}  w_2 \d \H^{n-1} \right) \left[ \psi'(c_2) - \psi'(c_1) \right]\nonumber\\
 &\hspace{0.6cm}+\left(\int_{(\pa E)_2}  v_2 \d \H^{n-1} \right) \left[ H_2 \psi(c_2) - \psi'(c_1)(c_2 H_2 +\rho) \right] \nonumber\\
 &\hspace{1.2cm} - \left(\int_{(\pa E)_1}  v_1 \d \H^{n-1} \right)
\left[ \psi'(c_1)(c_1 H_1 + \rho) - H_1 \psi(c_1) \right]  =0\,.
\end{align}
Taking $v_1=v_2\equiv0$ and $w_2$ such that
\[
\int_{(\pa E)_2}  w_2 \d \H^{n-1}\neq0\,,
\]
in \eqref{eq:3eq} gives us $\psi'(c_2) = \psi'(c_1)$.
By strict convexity this implies $c_1 = c_2=:c$.
Now, taking $w_2=v_1\equiv0$ and $v_2$ such that
\[
\int_{(\pa E)_2}  v_2 \d \H^{n-1}\neq0\,,
\]
in \eqref{eq:3eq} gives us
$$ (\psi(c) - c \psi'(c)) H_2 = \rho \psi'(c)$$
Exchanging the roles of $v_1$ and $v_2$ provides the same relationship with $H_1$ instead of $H_2$, thus since $\psi(c)-c\psi'(c) \neq 0$ we obtain that $H_1 = H_2 =: H_{\pa B}$ satisfies \eqref{cstcritpointsvalue}. Finally, notice that $H_{\pa B} > 0$, otherwise $E$ would be the complement of a at most countable union of balls, which is impossible since $\ov E$ is compact. In the end, $E = \bigcup_{i=1}^m B_i$ is a disjoint union of balls with the same radius, which is finite since $E$ is compact.
Finally, since $H_{\pa B}, \psi'(c) > 0$ we have $\psi(c) - c \psi'(c) > 0$, which yields $c < s_0$ by definition of $s_0$. \\

\textbf{Step four}: \textit{sufficient conditions}.
Conversely, let $(\bigcup_{i=1}^m B_i,c)$ be a finite disjoint union of balls with constant adatom density and with the same radius satisfying \eqref{cstcritpointsvalue}.
Using Definition \ref{def:av} we get on each connected component $\pa B_i$
\begin{align*}
&\int_{\pa B_i}[\,\psi'(c) w+\psi(c) v H_{\pa B} \,] \d \H^{n-1} \\
&\hspace{2cm}= \left( \int_{\pa B_i} v \d\H^{n-1} \right) \left[ \left(\psi(c) - c\psi'(c)\right) H_{\pa B} - \rho \psi'(c) \right] = 0\,.
\end{align*}
\end{proof}


\subsection{Existence and uniqueness of minimizers}

In this section we address the question of existence and uniqueness of minimizers for the constrained minimization problem
\eqref{eq:pb}. In particular, we prove that the minimum can be achieved by a ball with constant adatom density. A similar result can be found in \cite{Burger}. We present here an alternative proof under more general assumptions and that takes into account also the mass constraint. 

\begin{theorem}\label{thm:ex}
Fix $m>0$. Assume that
\begin{itemize}
\item[(A1)] $\psi'(0)<(n-1)\left(\, \frac{\omega_n}{m} \,\right)^{\frac{1}{n}} \rho^{\frac{1-n}{n}} \psi(0)\,,$
\end{itemize}
where $\omega_n = |B_1|$, and that either one of the following two conditions holds true:
\begin{itemize}
\item[(A2a)] $\psi$ is superlinear at infinity, \emph{i.e.}, $\lim_{s\rightarrow\infty}\psi(s)/s=\infty$,
\item[(A2b)] $\psi(s) =  as + b + g(s)$ with $b\leq0$ and $$ \lim_{s\rightarrow\infty} s^{1/(n-1)}g(s) = \lim_{s\rightarrow\infty} s^{n/(n-1)}g'(s) = 0.$$
\end{itemize}
Then there exist $R\in(0,\ov{R}_m)$, where
\begin{equation*}
\overline{R}_m:= \left(\frac{m}{\rho \omega_n}\right)^{1/n},
\end{equation*}
and a constant $c>0$ such that $(B_R,c)\in\text{Cl}(m)$ and
\[
\F(B_R,c)=\gamma_m\,.
\]
Moreover, if $(E,u)\in\text{Cl}(m)$ is a minimizing couple, then $E$ is a ball, and if $\psi$ is strictly convex, then $u$ is constant.
\end{theorem}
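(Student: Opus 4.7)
The plan is to reduce the minimization to a one-dimensional problem over balls with constant adatom density, and then analyze the resulting auxiliary function on $(0,\overline{R}_m]$ using (A1) and (A2a)/(A2b).

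First, for $(E,u)\in\mathrm{Cl}(m)$ with $V:=|E|$, I would bound $\F(E,u)$ from below by the energy of a ball with constant adatom density. Jensen's inequality applied to the convex $\psi$ gives $\F(E,u)\geq P(E)\psi(\overline{u})$, where $\overline{u}:=(m-\rho V)/P(E)$; under strict convexity, equality forces $u\equiv\overline{u}$ on $\pared E$. The right-hand side equals $h_V(P(E))$ for the perspective function $h_V(P):=P\psi((m-\rho V)/P)$, which is convex in $P$ with $h_V(P)\to\infty$ as $P\to\infty$. Since $h_V'(P)=\psi(s)-s\psi'(s)$ with $s=(m-\rho V)/P$, and the $s$-derivative of this expression is $-s\psi''(s)\leq 0$, the map $h_V$ is non-decreasing on $[P_{\min}(V),\infty)$ whenever $c_V:=(m-\rho V)/P_{\min}(V)$ lies in the parabolicity range $\{\psi(s)\geq s\psi'(s)\}$. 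Combined with the isoperimetric inequality $P(E)\geq P_{\min}(V):=n\omega_n^{1/n}V^{(n-1)/n}$, this yields $\F(E,u)\geq f((V/\omega_n)^{1/n})$, where
\[
f(R):=n\omega_n R^{n-1}\psi(c(R)),\qquad c(R):=\frac{m-\rho\omega_n R^n}{n\omega_n R^{n-1}},\quad R\in(0,\overline{R}_m].
\]

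Next, I would show that $f$ attains its minimum at some interior $R^*\in(0,\overline{R}_m)$. A direct computation gives
\[
f'(R)=n\omega_n R^{n-2}\Bigl[(n-1)\bigl(\psi(c)-c\psi'(c)\bigr)-\rho R\,\psi'(c)\Bigr].
\]
At $R=\overline{R}_m$, $c=0$, and condition (A1) rewrites as $\rho\overline{R}_m\psi'(0)<(n-1)\psi(0)$, so $f'(\overline{R}_m)>0$ and the infimum is not at the right endpoint. As $R\to 0^+$, $c(R)\to\infty$: in case (A2a), superlinearity together with $c(R)\cdot n\omega_n R^{n-1}=m-\rho\omega_n R^n\to m$ forces $f(R)\to\infty$. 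In case (A2b), substituting $\psi(s)=as+b+g(s)$ yields
\[
f(R)=a(m-\rho\omega_n R^n)+b\,n\omega_n R^{n-1}+g(c(R))\,n\omega_n R^{n-1},
\]
and the decay $s^{1/(n-1)}g(s)\to 0$ (together with $c(R)\sim m/(n\omega_n R^{n-1})$) makes the last term vanish, so $f(R)\to am$; the hypothesis $b\leq 0$ combined with $f'(\overline{R}_m)>0$ then gives $\inf f<am$. In both cases $f$ attains its infimum at some $R^*\in(0,\overline{R}_m)$ with $c^*:=c(R^*)>0$, and $(B_{R^*},c^*)\in\mathrm{Cl}(m)$ realizes $\gamma_m=f(R^*)$.

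For the characterization, if $(E,u)\in\mathrm{Cl}(m)$ attains $\F(E,u)=\gamma_m$, both Jensen's and the isoperimetric inequality must be equalities: the latter forces $E$ to be (equivalent to) a ball, and, if $\psi$ is strictly convex, the former forces $u$ to be constant on $\pared E$. The main obstacle is the reduction step: the monotonicity of $h_V$ on $[P_{\min}(V),\infty)$ requires $c_V<s_0$, which is not assumed globally. One must therefore verify that the global infimum is not attained outside the parabolic regime; this is precisely where (A1) is used to push $R^*$ strictly inside $(0,\overline{R}_m)$, and the asymptotics in (A2a) or (A2b) rule out concentration near $V=0$.
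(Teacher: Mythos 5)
Your reduction step is genuinely different from the paper's, and the difference is where your proof has a gap. The paper's Lemma \ref{reductionto} replaces $E$ by a ball $B_R$ \emph{keeping $\bar u$ fixed} and choosing $R$ to restore the constraint $\rho|B_R|+\bar u P(B_R)=m$; the isoperimetric inequality (together with the monotonicity of $r\mapsto\rho|B_r|+\bar u P(B_r)$) then forces $P(B_R)<P(E)$ unless $E$ is already a ball, so the energy $P(\cdot)\psi(\bar u)$ strictly decreases. This works unconditionally, for any convex $\psi$, and lands exactly on the one-parameter family $e(R)=f(R)$. Your reduction instead fixes the volume $V=|E|$ and tries to drop the perimeter to the isoperimetric value $P_{\min}(V)$, which changes the constant density from $\bar u$ to $c_V=(m-\rho V)/P_{\min}(V)$; as you observe, the needed monotonicity of $h_V$ on $[P_{\min}(V),\infty)$ only holds when $c_V\le s_0$, and $c_V\to\infty$ as $V\to0$, so the bound $\F(E,u)\ge f(R_V)$ fails for small $V$.

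Your last paragraph flags the issue but does not close it. (A1) only says $f'(\overline R_m)>0$, an endpoint condition at $c=0$ deep inside the parabolic regime; (A2a)/(A2b) control the behavior of $f(R)$ as $R\to0$, but say nothing about whether $\F(E,u)\ge\inf_R f$ for configurations with small volume and large $c_V$, which is the step you need. What would actually close the gap is an extra observation: when $c_V>s_0$, the best you get from $h_V$ is $\min_{P\ge P_{\min}(V)}h_V(P)=(m-\rho V)\psi(s_0)/s_0$, a decreasing function of $V$ that at the threshold $V_0$ (where $c_{V_0}=s_0$) equals $f(R_{V_0})\ge\inf_R f$, so the lower bound still dominates $\inf_R f$ for all $V<V_0$. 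Since you did not supply this, your proof as written is incomplete; the paper's Lemma \ref{reductionto} sidesteps the whole issue. (Your formula for $f'$, the verification that (A1) gives $f'(\overline R_m)>0$, and the endpoint analysis under (A2a)/(A2b) are all correct and essentially agree with the paper, modulo a typo in the paper's expansion in the (A2b) case which does not affect its conclusion.)
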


\begin{remark}
Examples of functions satisfying (A1-2) are $\psi(s):= 1+\gamma s^2$ for some $\gamma > 0$ and, less trivially, $\psi(s):= \sqrt{1+s^2}$ when $n\geq 3$. We will later make use of (A2b) for functions that are linear on some interval $(s_0, +\infty)$.
\end{remark}

\begin{remark}
The above theorem does not ensure uniqueness of minimizers, which is false in general (see Proposition \ref{prop:plateau}).
Moreover, in the case hypothesis (A1) or both (A2a) and (A2b) are not satisfied, we will show in Remark \ref{degballs} that the following phenomena can occur:
\begin{itemize}
\item[(i)] there is no minimizer,
\item[(ii)] the minimizer has zero adatom density.
\end{itemize}
Finally we point out that when $\psi$ is not strictly convex there can be a minimizer with non-constant $u$.
\end{remark}

In the sequel we will often use the following reduction lemma.
\begin{lemma}\label{reductionto}
Let $m > 0$. For any $(E,u) \in \text{Cl}(m)$ we have
\begin{equation}\label{VictorHugo}
\F(E,u) \geq \F (B_R, \ov{u} )
\end{equation}
where $$\bar u := \frac{1}{P(E)}\int_{\pared E} u\d \H^{n-1}\,$$ and $B_R$ is a ball such that $\rho |B_R| + \ov{u} P(B_R) = m$. Moreover, \eqref{VictorHugo} is strict unless $E = B_R$. Finally, if $\psi$ is \emph{strictly} convex, then equality is reached if and only if $(E,u) = (B_R, \ov{u})$.
\end{lemma}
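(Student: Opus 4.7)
The plan is to decouple the two sources of slack in $\F(E,u)$: the adatom density (handled by Jensen) and the shape of $E$ (handled by a weighted isoperimetric comparison tying $P(E)$ to $P(B_R)$ via the mass constraint).

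First I would apply Jensen's inequality with respect to the probability measure $\H^{n-1}\restr\pared E / P(E)$. Since $\psi$ is convex, this gives
\[
\F(E,u) = \int_{\pared E} \psi(u)\d\H^{n-1} \;\geq\; P(E)\,\psi(\bar u),
\]
with equality forcing $u \equiv \bar u$ $\H^{n-1}$-a.e.\ on $\pared E$ whenever $\psi$ is strictly convex.

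Next I would bound $P(E)$ from below by $P(B_R)$ using the classical isoperimetric inequality together with the mass constraint. Let $B_{R'}$ denote the ball with $|B_{R'}|=|E|$; then $P(B_{R'})\le P(E)$ with equality iff $E$ coincides with $B_{R'}$ up to a null set and a translation. Adding $\rho|E|$ to both sides and using $\bar u \geq 0$,
\[
\rho|B_{R'}| + \bar u\,P(B_{R'}) \;\le\; \rho|E| + \bar u\,P(E) \;=\; m \;=\; \rho|B_R| + \bar u\,P(B_R).
\]
Since the map $r\mapsto \rho\omega_n r^{n} + \bar u\, n\omega_n r^{n-1}$ is strictly increasing on $[0,\infty)$, this forces $R'\le R$, i.e.\ $|E|=|B_{R'}|\le |B_R|$. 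Feeding this back into the identity $\rho(|B_R|-|E|)=\bar u(P(E)-P(B_R))$, which follows directly from the two constraints, yields $P(B_R)\le P(E)$.

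Combining the two steps and using $\psi(\bar u)>0$ (since $\psi>0$ on $\R_+$),
\[
\F(E,u) \;\geq\; P(E)\psi(\bar u) \;\geq\; P(B_R)\psi(\bar u) \;=\; \F(B_R,\bar u),
\]
proving \eqref{VictorHugo}. For the rigidity statement: if equality holds, both steps are saturated. From $P(E)=P(B_R)$ and the constraint equations we also get $|E|=|B_R|$, and the isoperimetric rigidity then forces $E$ to be a ball congruent to $B_R$. When $\psi$ is moreover strictly convex, the Jensen equality additionally forces $u\equiv \bar u$, so $(E,u)=(B_R,\bar u)$.

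The main obstacle is Step~2: the inequality $P(B_R)\le P(E)$ does not follow from the isoperimetric inequality applied directly to $E$ and $B_R$, because these two balls generally do not share a volume. The trick is to introduce the auxiliary ball $B_{R'}$ of volume $|E|$ and exploit the strict monotonicity of the affine combination $\rho|\cdot| + \bar u P(\cdot)$ evaluated on balls — essentially the statement that balls are optimal for the whole weighted functional $\rho|\cdot|+\bar u P(\cdot)$ at fixed mass, which is exactly the content needed.
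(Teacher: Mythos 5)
Your argument is correct and uses essentially the same ingredients as the paper's proof: Jensen's inequality to pass to constant density, the isoperimetric inequality applied to the auxiliary ball $B_{R'}$ with $|B_{R'}|=|E|$, and the strict monotonicity of $r\mapsto\rho|B_r|+\bar u\,P(B_r)$; the paper phrases the perimeter comparison as a proof by contradiction whereas you argue directly, but the content is identical. One small point worth making explicit: your last step deduces $P(B_R)\leq P(E)$ from $\bar u\,\bigl(P(E)-P(B_R)\bigr)\geq 0$, which requires $\bar u>0$; when $\bar u=0$ the same identity instead forces $|E|=|B_R|$, hence $R'=R$ and the earlier isoperimetric step already gives $P(B_R)=P(B_{R'})\leq P(E)$ directly.
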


\begin{proof}
By Jensen's inequality
\begin{equation*}
\F(E,u)=\int_{\pared E} \psi(u)\d \H^{n-1}
\geq P(E)\psi\left(\bar u\right)=\int_{\pared E}\psi\left(\bar u\right)=\F(E,\overline{u})\,.
\end{equation*}
Notice that if $\psi$ is strictly convex, then equality is reached if and only if $u \equiv \ov{u}$. We can thus replace $u$ by $\bar{u}$ without increasing the energy. Now assume that $E$ is not a ball. Then
by the monotonicity of $r \mapsto \rho |B_r|+\bar{u}P(B_r)$, it is possible to find a radius $R \in(0,\infty)$ such that
\[
\rho |B_{R}|+\bar{u}P(B_{R})=m\,,
\]
that is, $(B_{R},\bar{u})\in\text{Cl}(m)$. We claim that $P(B_{R})<P(E)$, \emph{i.e.}, $\F(B_{R},\bar{u})<\F(E,u)$.  Suppose not. Then by the isoperimetric inequality we would have that
\[
P(B_{R})\geq P(E)>P(B)\,,
\]
where $B$ is a ball with $|B|=|E|$. This implies that $|B_{R}|>|B|=|E|$ and, in turn, that
\[
m=\rho |B_{R}|+\bar{u}P(B_{R})>\rho |E|+\bar{u}P(E)=m\,,
\]
and we reached a contradiction. 
\end{proof}

We now turn to the proof of Theorem \ref{thm:ex}.

\begin{proof}[Proof of Theorem \ref{thm:ex}]
By Lemma \ref{reductionto} we can reduce our study of minimizers to balls with constant adatom density satisfying the constraint. This is a one parameter family. Indeed, for every $R\in(0,\ov{R}_m)$ set
\begin{equation}\label{eq:baru}
\bar u(R):= \frac{m - \rho \omega_n R^n}{n\omega_n R^{n-1}}\,.
\end{equation}
Then $(B_R,\bar{u}(R))\in\text{Cl}(m)$ for every $R\in(0,\ov{R}_m)$. Let
\begin{equation}\label{eq:e}
e(R):=\F(B_R,\overline{u}(R))=n\omega_n R^{n-1} \psi\left(\bar u(R) \right)\,.
\end{equation}
We have
\[
e'(R)=n\omega_nR^{n-2}\left[\, (n-1)\psi\left(\bar{u}(R)\right)-R\psi'\left(\bar{u}(R)\right)
    \left(\, \frac{\rho}{n}+\frac{(n-1)m}{n\omega_nR^n}  \,\right)  \,\right]\,,
\]
and using (A1) we obtain
\[
e'(\ov{R}_m)=n\omega_n\ov{R}_m^{n-2}\left[\, (n-1)\psi(0)-\left(\frac{m}{\rho \omega_n}\right)^{\frac{1}{n}}\psi'(0)\rho  \,\right]>0\,.
\]
Moreover, if (A2a) is satisfied then
\[
e(R)=(m-\omega_nR^n)\frac{\psi\left(\bar{u}(R)\right)}{\bar{u}(R)} \underset{R\to 0}{\longrightarrow} \infty\,,
\]
while if (A2b) holds true, we get
\begin{equation*}
e'(R) = n\omega_nR^{n-2}\left[\,  (n-1)b-\frac{aR\rho}{n} + \underset{R\to 0}{o}(R) \,\right]\,,
\end{equation*}
and thus $e'(R)<0$ for all $R\in(0,\underline{R}_m)$, for some $\underline{R}_m>0$.
This concludes that there exists $R\in(\underline{R}_m,\overline{R}_m)$ such that $e(R)=\F(B_R, \ov{u}(R)) = \g$. 
\end{proof}

\begin{remark}
Notice that the criticality condition $e'(R) = 0$ is equivalent to the general condition \eqref{cstcritpointsvalue} introduced previously.
\end{remark}

\begin{remark}\label{degballs}
Let us consider the function $\psi(s):=as+b$, for some $a,b\in\R$. It holds that
\[
e'(R)=n\omega_n R^{n-2}[b(n-1)-a\rho R]\,.
\]
Taking $b=0$ we get $e'(R)<0$ for all $R\in(0,R_m)$, and thus the minimizer is given by $(B_{R_m},0)$.
If instead we take $b$ with $b>a\rho R/(n-1)$, we get $e'(R)>0$ for all $R\in(0,R_m)$. So, the \emph{expected} minimizer is given by a Dirac delta with infinite adatom density. This is clearly not an admissible minimizer in the present setting (see Section \ref{sec:relax}).
\end{remark}

We now turn to the study of uniqueness of such minimizers. As the next proposition shows, even when $\psi$ is strictly convex there may be a continuum of minimizing balls. 

\begin{proposition}\label{prop:plateau}
For every $0\leq R_1<R_2\leq \ov{R}_m$, there exists a strictly convex function $\psi$ satisfying the assumptions of Definition \ref{def:energy} and such that
\[
\{ R\in(0,R_m) \,:\, e(R)=\gamma_m \}=[R_1,R_2]\,.
\]
\end{proposition}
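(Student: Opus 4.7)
The plan is to reverse-engineer $\psi$ from a prescribed plateau value of $\F$. By Lemma \ref{reductionto}, the minimum of $\F$ on $\mathrm{Cl}(m)$ is attained among balls with constant adatom density, so $\gamma_m=\min_{R\in(0,\ov{R}_m)} e(R)$ with $e(R)=n\omega_n R^{n-1}\psi(\bar u(R))$ as in \eqref{eq:e}. Hence it suffices to build $\psi$ so that $e$ attains its minimum exactly on $[R_1,R_2]$.

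\textbf{Step 1 (equi-energy profile).} Fix $\gamma>0$ (destined to be $\gamma_m$). The constraint $\rho\omega_n R^n+n\omega_n s R^{n-1}=m$ defines a smooth strictly decreasing inverse $s\mapsto R(s)$ of $\bar u$ from $[0,+\infty)$ onto $(0,\ov{R}_m]$, and setting
\[
\psi_0(s):=\frac{\gamma}{n\omega_n R(s)^{n-1}}
\]
makes $n\omega_n R^{n-1}\psi_0(\bar u(R))\equiv\gamma$ tautologically. Implicit differentiation yields $R'(s)=-R/(\rho R+(n-1)s)$ and, introducing $F(s):=m/\omega_n-sR(s)^{n-1}=\rho R^n+(n-1)sR^{n-1}>0$, a direct computation gives
\[
\psi_0'(s)=\frac{(n-1)\gamma}{n\omega_n F(s)}, \qquad \psi_0''(s)=\frac{(n-1)\gamma\,\rho R(s)^n}{n\omega_n\,(\rho R(s)+(n-1)s)\,F(s)^2}>0,
\]
so $\psi_0$ is smooth, strictly convex and strictly increasing on $[0,+\infty)$, with $\psi_0(0)>0$ and $\psi_0'(0)=(n-1)\gamma/(nm)$.

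\textbf{Step 2 (perturbation off the plateau).} Set $s_1:=\bar u(R_2)\in[0,+\infty)$ and $s_2:=\bar u(R_1)\in(0,+\infty]$, so $s_1<s_2$. For $\epsilon>0$ to be chosen, define
\[
\psi(s):=\psi_0(s)+\epsilon\,\chi(s),\qquad \chi(s):=\begin{cases}(s_1-s)^2 & 0\leq s\leq s_1,\\ 0 & s_1\leq s\leq s_2,\\ (s-s_2)^2 & s\geq s_2,\end{cases}
\]
with the obvious convention that the first or last branch is omitted if $s_1=0$ or $s_2=+\infty$. Both quadratic pieces vanish together with their first derivatives at the gluing points, so $\psi\in\mathcal{C}^1(\R_+)$; and since $\chi''\geq 0$ and $\chi>0$ off $[s_1,s_2]$, strict convexity of $\psi_0$ passes to $\psi$. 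Choosing $\epsilon<\psi_0'(0)/(2s_1)$ (vacuous if $s_1=0$) keeps $\psi'(0)=\psi_0'(0)-2\epsilon s_1>0$, so $\psi$ is strictly increasing and $0<\psi(0)<\psi(s)$ for all $s>0$. Thus $\psi$ is strictly convex and verifies all assumptions of Definition \ref{def:energy}.

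\textbf{Step 3 (conclusion).} By construction $e(R)=\gamma+\epsilon\, n\omega_n R^{n-1}\chi(\bar u(R))$: the correction vanishes precisely on $\{R:\bar u(R)\in[s_1,s_2]\}=[R_1,R_2]$ and is strictly positive elsewhere. Lemma \ref{reductionto} then forces $\gamma_m=\gamma$ and $\{R\in(0,\ov{R}_m):e(R)=\gamma_m\}=[R_1,R_2]$. The only mildly technical point is the strict-convexity check for the equi-energy profile $\psi_0$ in Step 1, reducing to a careful sign analysis of the implicit derivatives; everything else is controlled $\mathcal{C}^1$ gluing.
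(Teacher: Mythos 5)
Your proof is correct, and it takes a genuinely different and arguably cleaner route than the paper's. The paper reverse-engineers $\psi$ by prescribing $f=(\log g)'$ with $g=\psi\circ\bar u^{-1}$, solving the ODE $g'=fg$ with $f\equiv h(R)=-(n-1)/R$ on $[R_1,R_2]$ (so that $e'\equiv 0$ there), and then proving strict convexity of the resulting $\psi$ by a perturbative argument requiring $\|f-h\|_{\mathcal{C}^1}$ small and $\varphi(R)/R^{n-1}\to 0$. You instead construct the equi-energy profile $\psi_0(s)=\gamma/(n\omega_n R(s)^{n-1})$ in closed form (the unique, up to scalar, $\psi$ with $e\equiv\mathrm{const}$), verify $\psi_0''>0$ by a direct computation of the implicit derivatives, and then break the equality off $[R_1,R_2]$ with an explicit $\mathcal{C}^1$ quadratic bump $\epsilon\chi$ with $\chi''\geq 0$. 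Your computations check out: $R'(s)=-R/(\rho R+(n-1)s)$, $\psi_0'=(n-1)\gamma/(n\omega_n F)$ with $F=\rho R^n+(n-1)sR^{n-1}>0$, and $F'=-\rho R^n/(\rho R+(n-1)s)$, giving the displayed positive $\psi_0''$; and the $\mathcal{C}^1$ gluing at $s_1,s_2$ works since the quadratic pieces vanish to first order there. What your approach buys is explicitness and a self-contained verification of strict convexity with no smallness parameter to tune; what the paper's approach buys is flexibility (one can prescribe the full sign pattern of $e'$ rather than just the zero set), at the cost of the perturbative convexity check. One cosmetic point shared with the paper's own proof: the stated identity $\{R\in(0,\ov{R}_m):e(R)=\gamma_m\}=[R_1,R_2]$ should be read modulo the endpoints $R_1=0$ or $R_2=\ov{R}_m$, which lie outside $(0,\ov{R}_m)$; your "obvious convention" for $s_1=0$ or $s_2=+\infty$ handles this correctly.
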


\begin{proof}
Let $h(R):=-\frac{n-1}{R}$ for $R\in(0,\ov{R}_m]$ and let $f:(0,\ov{R}_m]\rightarrow\R$ be a $\mathcal{C}^1$ negative function with
\[
f(R)=h(R)-\varphi(R)\quad\text{ in } (0,R_1)\,,
\]
\[
f(R)=h(R)\quad\text{ in } [R_1,R_2]\,,
\]
\[
f(R)>h(R)\quad\text{ in } (R_2,\ov{R}_m]\,,
\]
where $\varphi>0$ is such that $\varphi(R)/R^{n-1}\rightarrow0$ as $R\rightarrow0$.
Moreover we will impose that $\|f-h\|_{\C^1}<\e$ for some $\e>0$ that will be chosen later.
Let $g:(0,R_m]\rightarrow\R$ be the solution of the problem
\begin{equation}\label{lessie}
\left\{
\begin{array}{l}
g'(R)=f(R)g(R)\,, \\
g(R_m)=g_m\,,\\
\end{array}
\right.
\end{equation}
for some $g_m>0$. Notice that $g$ is decreasing. We recall that $\bar{u}:(0,R_m]\rightarrow\R_+$ (defined in \eqref{eq:baru}) is invertible, since
\[
\bar{u}'(R)=-\frac{\rho}{n}-\frac{(n-1)m}{n\omega_nR^n}<0\,.
\]
Moreover, $\bar{u}(R_m)=0$ and $\lim_{R\rightarrow0}\bar{u}(R)=\infty$.
Thus, the function $\psi(s):=g\left( \bar{u}^{-1}(s) \right)$ is well defined.
By considering $e:(0,R_m]\rightarrow\R_+$, defined in \eqref{eq:e}, we have that
\[
e'(R)=n\omega_nR^{n-2}\left(\, (n-1)g(R)+Rg'(R)  \,\right)\,,
\]
and thus, by the definition of $g$, it holds that
\[
e'(R)<0\quad\text{ for } R\in(0,R_1)\,,
\]
\[
e'(R)=0\quad\text{ for } R\in[R_1,R_2]\,,
\]
\[
e'(R)>0\quad\text{ for } R\in(R_2,R_m]\,.
\]
Hence
\[
\{ R\in(0,R_m) \,:\, e(E)=\gamma_m \}=[R_1,R_2]\,.
\]
We claim that $\psi$ is strictly convex and satisfies $\psi(s)>\psi(0)>0$. The latter can be seen from the fact that
\begin{equation}\label{eq:gpsi}
g'(R)=\psi'\left(\bar{u}(R)\right)\bar{u}'(R)< 0, \ \ \ \ \bar{u}'(R)<0, \ \  \ \ \psi(0)=g(R_m) = g_m >0
\end{equation}
for all $R\in(0,R_m)$.  
In what concerns strict convexity, by differentiating \eqref{lessie} we get that
\[
\left[\, f^2(R)+f'(R) \,\right]g(R)=g''(R)=\psi''\left(\bar{u}(R)\right)\left(\bar{u}'(R)\right)^2
    +\psi'\left(\bar{u}(R)\right)\bar{u}''(R)\,
\]
and thus, using \eqref{eq:gpsi}, we are led to
\[
\psi''\left(\bar{u}(R)\right)\left(\bar{u}'(R)\right)^2=\left(\, f^2(R)+f'(R)-f(R)\frac{\bar{u}''(R)}{\bar{u}'(R)} \,\right)g(R).
\]
Notice that
\[
h^2(R)+h'(R)-h(R)\frac{\bar{u}''(R)}{\bar{u}'(R)}=\frac{n(n-1)\rho \omega_nR^{n-2}}{\rho \omega_nR^n+(n-1)m}>0\, , 
\]
for all $R\in(0,R_m]$. Then
\begin{align*}
&f^2(R)+f'(R)-f(R)\frac{\bar{u}''(R)}{\bar{u}'(R)} \\
&\hspace{1cm}=(f(R)-h(R))^2+(f(R)-h(R))'\\
&\hspace{2cm}-(f(R)-h(R))\frac{\bar{u}''(R)}{\bar{u}'(R)}+2h(R)(f(R)-h(R))\\
&\hspace{3cm}+h^2(R)+h'(R)-h(R)\frac{\bar{u}''(R)}{\bar{u}'(R)}\,.
\end{align*}
Using $\varphi(R)/R^{n-1}\rightarrow0$ as $R\rightarrow0$ and that
$\bar{u}''(R)/\bar{u}'(R)$ is of order $1/R$ as $R\rightarrow0$,
choosing $\e>0$ small enough we guarantee that $\psi''(s)>0$ for all
$s\in(0,\infty)$.
\end{proof}

\begin{example}
{\rm
If $\psi(s) := 1+\gamma s^2$ for some $\gamma > 0$ and in dimension $n=2$ one can show that $R \mapsto e(R)$ has exactly one critical point $R_*(\gamma)$ which corresponds to the global minimizer, with
$$R_*(\gamma) = \frac{1}{\rho} \sqrt{\frac{2 \sqrt{\gamma^2 m^2 \rho^2-\pi  \gamma m \rho+\pi ^2}+\gamma m \rho-2 \pi }{3 \pi  \gamma}}\,.$$
Notice that $R_*(\gamma)  \underset{\gamma \to +\infty}{\longrightarrow} \ov{R}_m$. A similar asymptotic behavior has been observed also in \cite{Burger} with a misprint in the value of $R_*$ that, however, does not affect the limiting analysis done by the author.}
\end{example}

\section{The relaxed functional}

  \label{sec:relax}
The family $(E,u)$ of couples where $E$ is a set of finite perimeter and  $u \in L^1(\pared E, \R^+)$ is not closed under any reasonable topology as depicted in Figure \ref{Pacman}, which motivates us to embed $L^1(\pared E, \R^+)$ into Radon measures in order to take this effect into account.    
\begin{figure}[h!]
\centering
\includegraphics[scale=0.6]{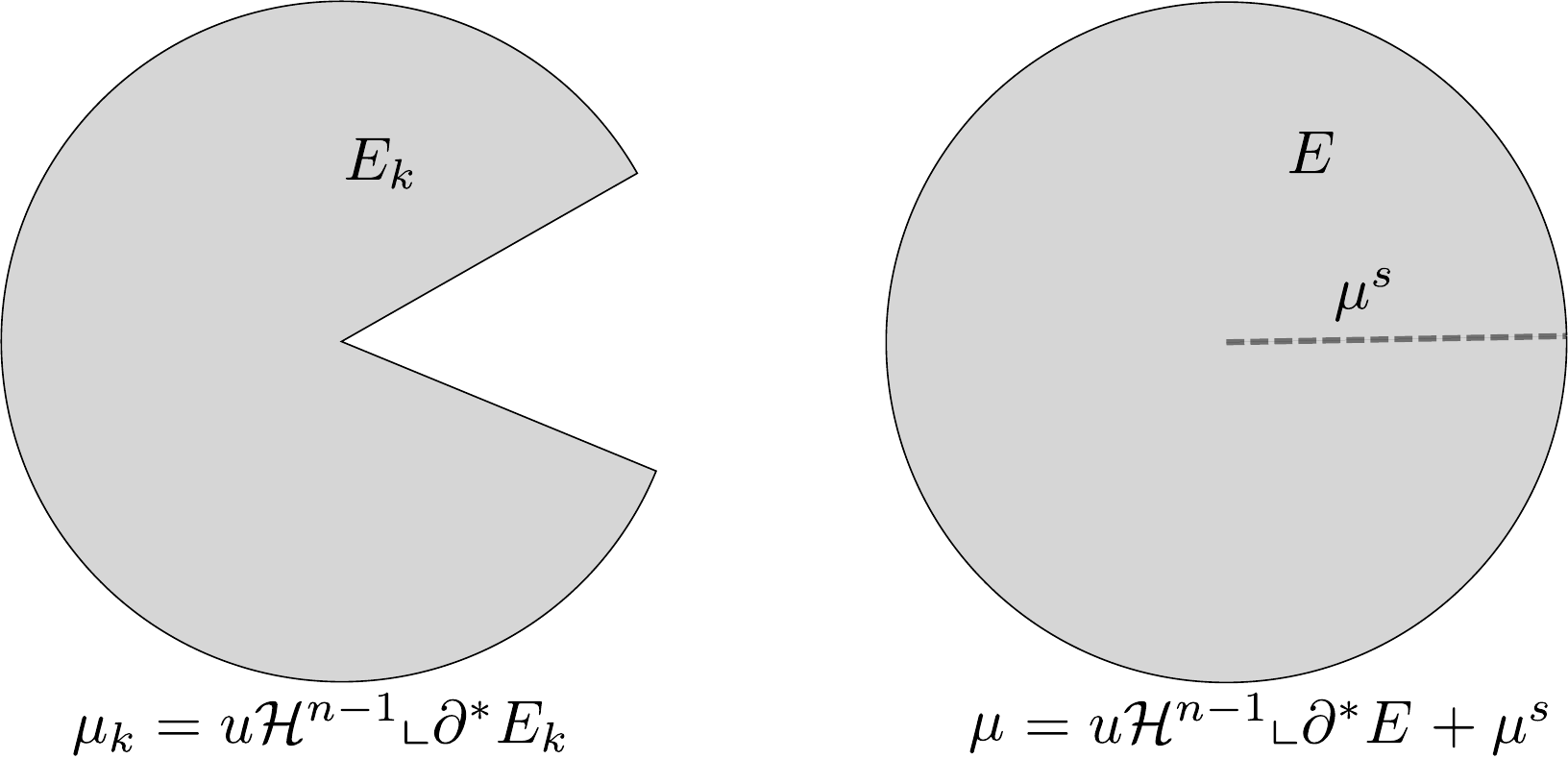}\caption{This example shows that we can easily escape from the class of couples $(E,u)$ with $u \in L^1(\pared E, \R^+)$}\label{Pacman}
\end{figure}


\subsection{Topology and necessary conditions for lower semicontinuity}
For every couple $(E,u)$ with $E$ a set of finite perimeter and $u \in L^1(\pared E, \R^+)$ a Borel function, let $\mu\in\mathcal M^+_{loc}(\R^n)$ be given by
\[
\mu:=u\,\H^{n-1}\restr\partial^* E = u |D\ca_E|.
\]
With this identification we can write
\[
\int_{\partial^*E} \psi(u)\,\d\H^{n-1}=\int_{\R^n} \psi\left(\frac{\d \mu}{\d|D \ca_E|}\right)\d |D\ca_E|\,.
\]
Fixed $m>0$, we consider the extension of $\F$ to the space
\[
\mathfrak S := \mathfrak C (\R^n)\times \mathcal{M}^+_{loc}(\R^n)\,, 
\]
as
\begin{equation*}
\F(E,\mu):=
\left\{
\begin{array}{ll}
\displaystyle\int_{\partial^*E} \psi(u)\,\d\H^{n-1} & \text{ if } \mu = u|D\ca_E| \text{ with } u\in L^1(\pared E,\R_+) \text{ Borel}\,,\\
&\\
+\infty & \text{ otherwise}\,.
\end{array}
\right.
\end{equation*}

\begin{remark}\label{rem:ac}
Couples $(E,u |D\ca_E|)\in\mathfrak{S}$ will be called \textit{absolutely continuous} couples and will be sometimes denoted by $(E,u)$ to simplify the notation.
\end{remark}

We are now in position to define our topology.

\begin{definition}\label{topology}
We endow $\mathfrak S$ with the product of the $L^1$ topology and the weak-* topology in $\mathcal{M}^+_{loc}(\R^n)$.
In particular, given $\left((E_k,\mu_k)\right)_{k\in\N}\subset \mathfrak S$ and  $(E,\mu)\in\mathfrak S$, we say that
\[
(E_k,\mu_k)\rightarrow (E,\mu) \ \ \ \text{in $\mathfrak S$}
\]
if and only if  $\ca_{E_k}\rightarrow \ca_E$ in $L^1$ and $\mu_k\wt \mu$ in $\mathcal{M}^+_{loc}(\R^n)$.
Moreover, we define the distance $d_{\mathfrak S}$ on $\mathfrak S$, which metrizes the above topology, as
\[
d_{\mathfrak S}\left[(E,\mu),(F,\nu)\right]:=
    \|\ca_E - \ca_F\|_{L^1}+\mathrm{d}_{\mathcal{M}}(\mu,\nu)\,,
\]
where $\mathrm{d}_{\mathcal{M}}$ is the distance given by Proposition \ref{prop:weakstarmetriz}. 
\end{definition}

In the sequel we will always use the above topology without mentioning it explicitly.
We now prove some necessary conditions that $\psi$ has to satisfy in order to ensure the lower semi-continuity of $\F$.
These conditions are in contrast with the superlinearity of the prototypes $\psi(u)=1+\gamma u^2$ used in \cite{Burger} (and with the classical assumption (A2a)).

\begin{figure}[h!]
\centering
\includegraphics[scale=0.6]{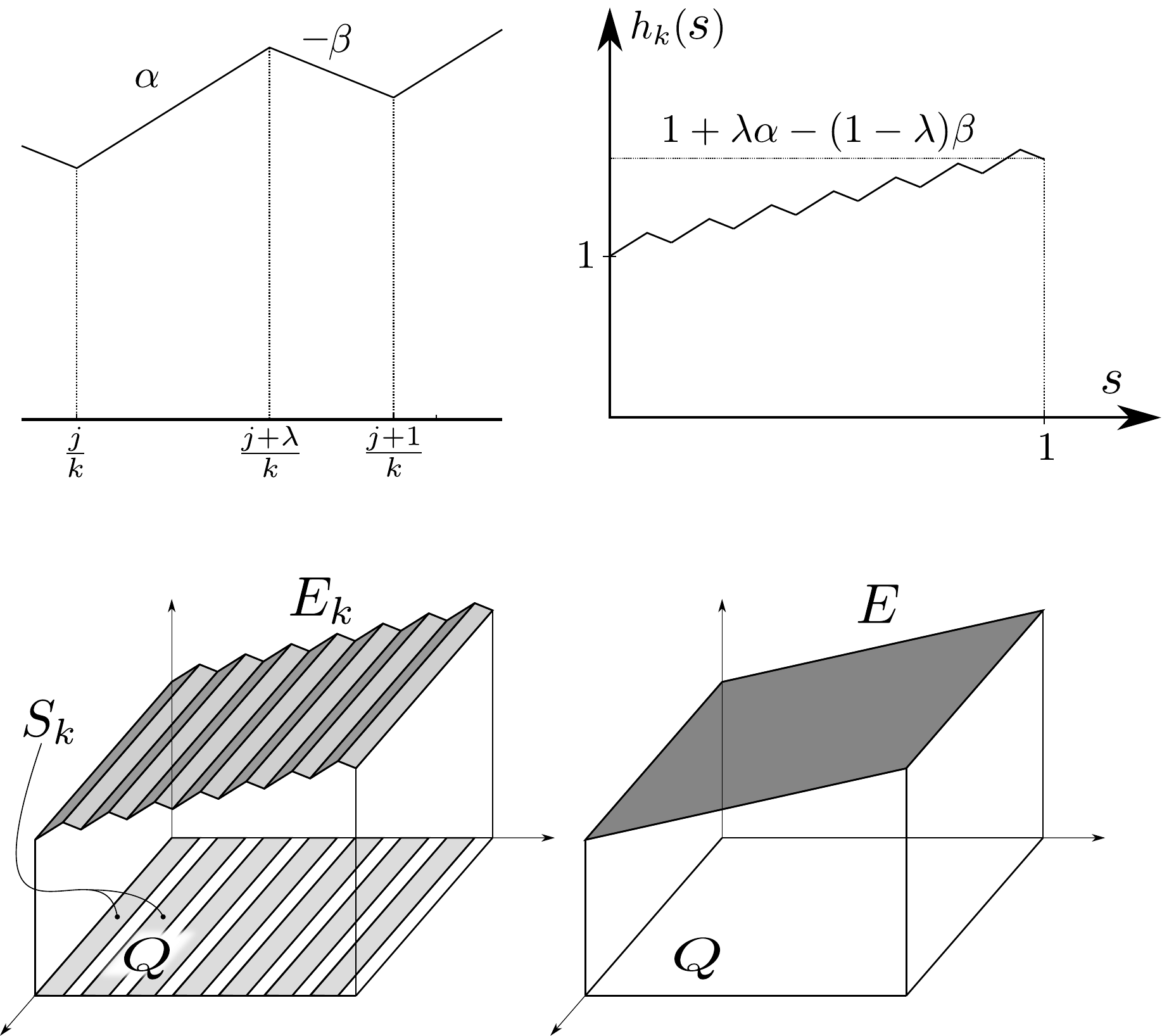}\caption{The set $E_k$ and its limit $E$ in $\R^3$. On $\pared E_k$ (on the left) we fix $u$ to be piecewise constant and equal to $a$ or $b$ in the upper part (depending on the different slopes of $h_k$) and $0$ everywhere else. The limit set $E$ (on the right) will have a piecewise constant $u$ as in \eqref{eqn: limit adatom counter} defined on $\pared E$.}\label{picture: counterexample}
\end{figure}

\begin{proposition}\label{prop:necessaryconditions}
Assume that $\F$ is lower semi-continuous.
Then, for all $a,b,\a,\b,\in \R^+$ and $0\leq \l \leq 1$, $\psi$ has to satisfy the relation
	\begin{equation}\label{eq:cond}
	\psi\left(\frac{a\l\sqrt{1+ \a^2} + b(1-\lambda)\sqrt{1+ \b^2}}{\sqrt{1+(\lambda\a - (1-\lambda)\b)^2}} \right) \leq \frac{\psi(a)\lambda \sqrt{1+ \a^2} + \psi(b)(1-\lambda)\sqrt{1+\b^2} }{\sqrt{1+(\lambda\a - (1-\lambda)\b)^2}}.
	\end{equation}
\end{proposition}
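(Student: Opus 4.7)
The strategy is to exploit the assumed lower semi-continuity of $\F$ against an oscillating sequence $(E_k,\mu_k)$ whose two alternating surface slopes carry respective adatom densities $a$ and $b$, and whose limit $(E,\mu)$ sits on a single tilted plane with an averaged adatom density $u^*$ dictated by the equality between the total arc-length weighted mass and the chord-length weighted mass. Working in $\R^2$ and extending translation-invariantly in the extra coordinates when $n\geq 3$, fix a large rectangle $Q=[-L,L]^2$ and define $h_k:[-L,L]\to\R$ to be a continuous piecewise affine function which on $[-1,1]$ is $(1/k)$-periodic, with slope $\a$ on subintervals of length $\l/k$ and slope $-\b$ on the complementary subintervals of length $(1-\l)/k$; choose the phase so that $h_k$ agrees at $x_1=\pm 1$ with the affine function $h(x_1):=(\l\a-(1-\l)\b)x_1$, and set $h_k\equiv h$ outside $[-1,1]$. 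Let $E_k:=\{x\in Q:x_2<h_k(x_1)\}$ and $E:=\{x\in Q:x_2<h(x_1)\}$, so that $h_k\to h$ uniformly, $E_k\to E$ in $L^1$, and $E_k\equiv E$ outside the zig-zag strip. Finally, let $u_k:\pared E_k\to\R_+$ equal $a$ on the slope-$\a$ segments of the zig-zag, $b$ on the slope-$(-\b)$ segments, and $0$ on the remainder of $\pa E_k$.

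Setting $\mu_k:=u_k\H^{n-1}\restr\pared E_k$ and
\[
u^*:=\frac{a\l\sqrt{1+\a^2}+b(1-\l)\sqrt{1+\b^2}}{\sqrt{1+(\l\a-(1-\l)\b)^2}},
\]
the candidate weak-$*$ limit is $\mu:=u^*\H^{n-1}$ restricted to $\mathrm{graph}(h)\cap([-1,1]\times\R)$, extended by zero off that slanted segment. The convergence $\mu_k\wt\mu$ reduces to a Riemann-sum computation: for any $\vphi\in C_c(\R^n)$, each $(1/k)$-cell in the base contributes $\vphi$ sampled on the zig-zag times the two arc-length weights $a\l\sqrt{1+\a^2}/k$ and $b(1-\l)\sqrt{1+\b^2}/k$, so by uniform continuity of $\vphi$ together with $h_k\to h$,
\[
\int\vphi\,\d\mu_k \longrightarrow \int_{-1}^{1}\vphi(x_1,h(x_1))\left(a\l\sqrt{1+\a^2}+b(1-\l)\sqrt{1+\b^2}\right)\d x_1,
\]
which, upon parametrizing the limit segment by arc length $\d s=\sqrt{1+(\l\a-(1-\l)\b)^2}\,\d x_1$, coincides with $\int\vphi\,\d\mu$; the mass-matching required for this identity is precisely what forces the stated value of $u^*$.

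Outside the zig-zag strip, $E_k$ and $E$ coincide while $\mu_k$ and $\mu$ both vanish, so that the energies split as
\[
\F(E_k,\mu_k)=2\psi(a)\l\sqrt{1+\a^2}+2\psi(b)(1-\l)\sqrt{1+\b^2}+C_0
\]
with $C_0$ a $k$-independent constant equal to $\psi(0)$ times the $\H^{n-1}$-measure of the common non-oscillating part of $\pa E_k=\pa E$, and analogously $\F(E,\mu)=2\psi(u^*)\sqrt{1+(\l\a-(1-\l)\b)^2}+C_0$. The lower semi-continuity inequality $\F(E,\mu)\leq\liminf_k\F(E_k,\mu_k)$, after cancelling $C_0$ and dividing by $2\sqrt{1+(\l\a-(1-\l)\b)^2}$, is exactly \eqref{eq:cond}.

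The delicate step is the rigorous verification of $\mu_k\wt\mu$: although the limit is intuitively obvious, one must argue as in the Riemann-sum computation above, whose content is precisely the renormalization of density from $a,b$ to $u^*$ caused by the mismatch between the zig-zag arc length and the chord length of the limit segment. A secondary technicality is the handling of the ambient box $Q$: forcing $h_k\equiv h$ outside $[-1,1]$ and $u_k\equiv 0$ on $\pa Q\cap\pa E_k$ produces the same constant $C_0$ on both sides of the LSC inequality, which then cancels cleanly and prevents the ambient geometry from contaminating the desired estimate.
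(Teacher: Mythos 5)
Your proof is correct and follows essentially the same strategy as the paper: construct an oscillating graph with alternating slopes $\alpha$, $-\beta$ carrying alternating adatom densities $a$, $b$, identify the weak-$*$ limit density on the tilted limiting plane, and apply lower semicontinuity. The only cosmetic differences are that you work in $\R^2$ extended translation-invariantly (the paper works directly in $\R^n$ with the same cylindrical structure $H_k(z)=h_k(z_1)$), and that you localize the oscillation to $[-1,1]$ with matching endpoint values so that $E_k=E$ off the strip, whereas the paper oscillates over the whole cube and instead verifies $P(E_k;\pa Q\times\R)\to P(E;\pa Q\times\R)$ by an $O(1/k)$ boundary estimate.
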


\begin{remark}\label{rem:equivconv}
\rm{ Relation \eqref{eq:cond} is obtained by testing $\F$ on a sequence of wriggled planes with a piecewise constant adatom density $u$ as illustrated in Figure \ref{picture: counterexample}.
}
\end{remark}

\begin{proof}[Proof of Proposition \ref{prop:necessaryconditions}]
Fix $0\leq \b \leq \a$, $0\leq \l\leq 1$ and, for every $k\in \N^{*}$, define the piecewise $\mathcal{C}^1$ function
$h_k:[0,1]\rightarrow\R$ as
	\begin{equation*}
	h_k(s):=\left\{
	\begin{array}{ll}
	s\a+1-\frac{(1-\l) j}{k}(\a+\b)& \text{if $s\in \left[\frac{j}{k}, \frac{j+\l}{k} \right]$},\\
	 & \\
	-s\b+1+\frac{\l (j+1)}{k}(\a+\b)& \text{if $s\in \left[\frac{j}{k}+\frac{\l}{k}, \frac{j+1}{k}\right]$}\,.
	\end{array}
	\right.
	\end{equation*}
Set 
	\[
	S_k:= \bigcup_{j=0}^{k-1} \left( \left[ \frac{j}{k},\frac{j+\l}{k} \right]\times \R^{n-2} \right), \ \ \ \ T_k:=S^c_k.
	\]
Let $Q:=[0,1]^{n-1}\subset \R^{n-1}$. For every $k\in \N^*$, define $H_k:Q\rightarrow \R$ as
	\[
	H_k(z):=h_k(z\cdot e_1)=h_k(z_1)\,,
	\]
where we write $z=(z_1,\dots,z_{n-1})\in\R^{n-1}$ and the set
	\begin{align*}
	E_k:=\left\{( z,s) \in Q\times\R \ | \ 0\leq s \leq H_k( z )\right\}.
	\end{align*}
Moreover we define the adatom density $u_k:\pared E_k\rightarrow[0,\infty)$ as
	\begin{equation*}
	u_k( x )=\left\{
	\begin{array}{ll}
 	a & \text{on  $ Gr\left(H_k, S_k \cap Q^{\circ} \right)$}\,,\\
 	b & \text{on $Gr\left(H_k, T_k \cap Q^{\circ} \right)$}\,,\\
	0 & \text{elsewhere}\,,
	\end{array}
	\right. 
	\end{equation*}
where, for any function $f:\R^{n-1} \rightarrow \R$ and for any $A\subset \R^{n-1}$,
	\[
	Gr(f,A):= \{ ( z , f( z ) ) \in \R^n \ | \   z \in A\}.
	\]
Let $\mu_k:= u_k|D \ca_{E_k}|$.\\

\noindent \textbf{Claim:} Up to extracting a subsequence (not relabeled), it holds that
\[
(E_k,\mu_k)\rightarrow(E,u |D\ca_E|)\,,
\]
where
	\[
	E:=\{ ( z ,s)\in Q\times \R \ | \ 0\leq s \leq H( z )  \}\,,
	\]
$H:Q\rightarrow\R$ is given by
\[
H( z ) :=  (\l \a- (1-\l)\b)( z \cdot  e_1)  +1
\]
 and $u:\partial^* E\rightarrow[0,\infty)$ is the adatom density
	\begin{equation}\label{eqn: limit adatom counter}
	u( x ):=\left\{
	\begin{array}{ll}
	\frac{\l a \sqrt{1+\a^2} + b(1-\l) \sqrt{1+\b^2} }{\sqrt{1 +  ( \l \a -(1-\l) \b)^2 )} } & \text{for  $z\in Gr\left(H, Q^{\circ} \right)$}\,,\\
	0 & \text{elsewhere}.
	\end{array}
	\right. \, 
	\end{equation}
\text{}\\
Moreover
\[
P(E_k; \pa Q \times \R) \rightarrow P(E; \pa Q \times \R)\,.
\]

Let us show how to derive the condition \eqref{eq:cond} assuming the validity of the claim. Notice that
\begin{align*}
 \F(E_k,\mu_k)&= \int_{\pared E_k} \psi(u_k ( x) ) \d \H^{n-1}( x )\\
&=\psi(0)\left[\H^{n-1}(Q)+P(E_k; \pa Q \times \R)\right]\\
&\hspace{2cm}+ \int_{\pared E_k \cap (Q^{\circ}\times (0,\infty)) } \psi(u_k ( x ) ) \d \H^{n-1}( x )\\
&= \psi(0)\left[\H^{n-1}(Q)+P(E_k; \pa Q \times \R)\right]\\
&\hspace{2cm}+ \int_{ Q^{\circ}\cap S_k } \psi(a ) \sqrt{1+|\nabla H_k( z )|^2}\d   z \\
&\hspace{4cm}+ \int_{ Q^{\circ}\cap T_k } \psi(b ) \sqrt{1+|\nabla H_k( z )|^2}\d   z \\
&=\psi(0)\left[\H^{n-1}(Q)+P(E_k; \pa Q \times \R)\right]+  \psi(a )\l \sqrt{1+\a^2}\\
&\hspace{2cm}+\psi(b ) (1-\l) \sqrt{1+\b^2}\,,
\end{align*}
where we used the identities
	\begin{equation*}
	\H^{n-1}(Q^{\circ}\cap S_k)=\H^{n-1}(Q\cap S_k)= \l, \ \ \ \H^{n-1}(Q^{\circ}\cap T_k)= \H^{n-1}(Q\cap T_k)= 1-\l.
	\end{equation*}
Analogously
	\begin{align*}
	\F(E, u) &=\psi(0)\left[\H^{n-1}(Q)+P(E_k; \pa Q \times \R)\right]\\
	&+ \psi\left(\frac{\l a \sqrt{1+\a^2} + b(1-\l) \sqrt{1+\b^2} }{\sqrt{1 +  ( \l \a -(1-\l) \b)^2 )} } \right)\sqrt{1 +  ( \l \a -(1-\l) \b)^2 )}.
	\end{align*}
By the semicontinuity of $\F$ and the fact that $P(E_k; \pa Q \times \R)\rightarrow P(E; \pa Q \times \R)$, we obtain \eqref{eq:cond}. We now focus in proving the claim. We divide the proof in two steps.\\
\text{}\\
\textbf{Step one:} \textit{$E_k\rightarrow E$ and $P(E_k;\pa Q\times \R) \rightarrow P(E;\pa Q\times \R)$}.
By the definition of $H_k$ and  $H$ we have
	\begin{equation}\label{unifconvgraphmarco}
	\sup_{ z \in Q}\{|H_k(  z ) -H( z ) |\} \leq \frac{C}{k}
	\end{equation}
for a constant $C$ depending on $\a,\b,\l$ only. In particular $H_k \rightarrow H$ in $C^0(Q)$ and thus $E_k \rightarrow E$.
Also by construction we obtain
	\begin{align*}
	\left|\, P(E_k; \pa Q \times \R)-P(E; \pa Q \times \R) \,\right|\leq\int_{\pa Q} |H_k(y) - H(y)|\d \H^{n-2}(  y )<\frac{C}{k}.
	\end{align*}
\text{}\\
\textbf{Step two:}  \textit{$\mu_k\wt u|D\ca_E|$.} Notice that 
	\[
	\mu_k(\R^n)< \max\{a,b\}P(E_k) < C
	\]
 for some constant $C>0$ and for some $R>0$. Thus, up to a subsequence (not relabeled), we can assume $\mu_k\wt\mu$ for some measure $\mu$. Moreover, by \eqref{unifconvgraphmarco} we have that $\mu(A)=0$ for all open sets $A\subset \R^{n}$ such that $|D \ca_E|(A)=0$. In particular, for $\H^{n-1}$-almost every $x\in \pa E$ the function 
 	\[
 	v( x ):= \lim_{r\rightarrow 0} \frac{\mu(B_r( x ))}{|D\ca_E| (B_r( x ))}
 	\]
turns out to be well defined. This implies that we can write
\[
\mu=v |D\ca_E|.
\]
It remains to show that $v=u$. By  \eqref{borelconv} and \eqref{foliation} we have, for all but countably many $r>0$,
 	\[
 	\mu(B_r)= \lim_{k\rightarrow\infty} \mu_k(B_r)\,.
 	\]
Fix $ \bar{x}  \notin Gr(H,Q^{\circ})$. Then, for $r$ small enough, we have that $\mu_k(B_r( \bar{x} ) ) =0$. Thus, $\mu(B_r( \bar{x} ) ) =0$, that implies $v(\bar{x})=0$ for all $\bar{x}\in\R^n\setminus Gr(H,Q^{\circ})$.

Let us now fix $ \bar{x}  \in Gr(H,Q^{\circ})$. For $r>0$ set
	\begin{align*}
	D_r&:=\{  z  \in Q^{\circ} \ | \ (  z ,H( z ))\in B_r(  x )\cap Gr(H,Q^{\circ})\},\\
	D_r^k&:=\{  z  \in Q^{\circ} \ | \ ( z ,H_k( z ))\in B_r( x )\cap Gr(H_k,Q^{\circ})\}
	\end{align*}
so that $B_r( \bar{x} )\cap \pa E=Gr(H,D_r)$, $B_r( \bar{x} )\cap \pa E_k=Gr(H_k,D_r^k)$. In particular,
\begin{align*}
 \mu_k(B_r( \bar{x} )) &=\int_{\pa E_k \cap B_r( \bar{x} )} u_k( x ) \d \H^{n-1}(  x )\\
  &=\int_{D^k_r} u_k(  z , H_k(z ) ) \sqrt{1+|\nabla H_k( z )|^2} \d  z \\
  &=a \sqrt{1+\a^2}\ \H^{n-1} (  D^k_r\cap  S_k  )  + b \sqrt{1+\b^2}\ \H^{n-1} (  D^k_r\cap  T_k  ).
 \end{align*}
Notice that, by \eqref{unifconvgraphmarco},
	\[
	\lim_{k\rightarrow +\infty} \H^{n-1} (  D_r^k\cap  S_k  )=\lambda \H^{n-1}(D_r)\,,
\]
and
\[
\lim_{k\rightarrow +\infty} \H^{n-1} (  D_r^k\cap  T_k  )=(1-\lambda) \H^{n-1}(D_r)\,.
\] 
Thus
 	\[
 	\mu(B_r( \bar{x} ) ) =a \sqrt{1+\a^2}\lambda \H^{n-1}(D_r) + b \sqrt{1+\b^2}(1-\l) \H^{n-1}(D_r).
 	\]
On the other hand, we have that
	\[
	|D\ca_E|(B_r(  \bar{x} ) ) =\int_{D_r}\sqrt{1+ | \nabla H(z)|^2} \d   z 
	=\H^{n-1}(D_r)\sqrt{1+(\l \a - (1-\l)\b)^2} .
	\]
Hence
	\[
	v(\bar{x}) = \frac{a\l \sqrt{1+\a^2} + b(1-\l) \sqrt{1+\b^2} }{\sqrt{1+(\l \a - (1-\l)\b)^2}}=u( \bar{x} )\,.
	\]
This proves the claim and thus concludes the proof.
\end{proof}

\begin{corollary}\label{cor:neccond}
If $\F$ is lower semi-continuous then $\psi$ is a convex function such that
	\begin{equation}\label{eq:cond3}
	\psi(a+b)\leq \psi(a)+\psi(b)\,,
	\end{equation}
for all $a,b\in \R_+$.
\end{corollary}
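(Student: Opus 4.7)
The plan is to derive both conclusions by specializing the single inequality \eqref{eq:cond} established in Proposition \ref{prop:necessaryconditions} to two well-chosen choices of the parameters $\a,\b,\l$. The key observation is that the scaling factor $\sqrt{1+\a^2}/2$ appearing on both sides of \eqref{eq:cond} when $\a=\b$ and $\l=1/2$ equals $1$ exactly when $\a=\sqrt{3}$, and that the whole inequality degenerates into ordinary convexity when $\a=\b=0$. So the strategy is really just to evaluate \eqref{eq:cond} at two particular points of the parameter space.

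First I would recover convexity. Setting $\a=\b=0$ in \eqref{eq:cond}, the denominator $\sqrt{1+(\l\a-(1-\l)\b)^2}$ equals $1$, both square roots in the numerators become $1$, and the inequality reduces immediately to
\begin{equation*}
\psi\bigl(\l a + (1-\l) b\bigr)\leq \l\psi(a) + (1-\l)\psi(b)
\end{equation*}
for all $a,b\in\R_+$ and $\l\in[0,1]$, which is precisely the convexity of $\psi$.

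Next I would derive the subadditivity inequality \eqref{eq:cond3}. Setting $\l=1/2$ and $\a=\b=\sqrt{3}$, the quantity $\l\a-(1-\l)\b$ vanishes, so the denominator is again $1$, while $\sqrt{1+\a^2}=2$, so each factor $\l\sqrt{1+\a^2}$ and $(1-\l)\sqrt{1+\b^2}$ equals $1$. Plugging these into \eqref{eq:cond} produces
\begin{equation*}
\psi(a+b) \leq \psi(a) + \psi(b)
\end{equation*}
for all $a,b\in\R_+$, which is \eqref{eq:cond3}.

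There is no real obstacle here: the whole content of the corollary is already packaged inside Proposition \ref{prop:necessaryconditions}, and the only thing to do is to choose the parameters well. The mildly nontrivial step is spotting the value $\a=\b=\sqrt{3}$ that makes the prefactor $\sqrt{1+\a^2}/2$ equal to one; this is what allows the inequality to upgrade from a weighted midpoint convexity to a genuine subadditivity statement.
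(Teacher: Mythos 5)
Your proof is correct and coincides exactly with the paper's: both specialize \eqref{eq:cond} to $\a=\b=0$ to extract convexity and to $\l=1/2$, $\a=\b=\sqrt{3}$ to extract subadditivity. Your verification of the algebra (the vanishing of $\l\a-(1-\l)\b$ and the factor $\sqrt{1+\a^2}=2$) is accurate and the paper simply states these substitutions without spelling out the computation.
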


\begin{proof}
Take $\a=\b=0$ in \eqref{eq:cond} to deduce that $\psi$ is convex and set $\a=\b=\sqrt{3}$, $\lambda=\frac{1}{2}$ to obtain \eqref{eq:cond3}.
\end{proof}

The above result indicates that the conditions we are imposing so far on $\psi$ are, in general, not sufficient to ensure the lower semi-continuity of $\F$.  Moreover, even when $\psi$ is an admissible function, as in Definition \ref{def:energy}, and such that \eqref{eq:cond3} is satisfied, we do not expect $\F$ to be lower semi-continuous. Indeed, concentration phenomena can take place, as illustrated in Figure \ref{Pacman}, or along a sequence of shrinking balls with adatom density blowing up (see Remark \ref{degballs}). On the other hand, \eqref{eq:cond3} guarantees the finiteness of $ \lim_{s\to +\infty}  \psi(s)/s $. Taking all of this together into consideration, we build a candidate for the relaxed functional by replacing $\psi$ with its convex and subadditive envelope (see Section \ref{seccvxsubadd}) and by adding its \textit{recession function} on the singular part of the measure.


\begin{definition}
Given $\psi:\R_+ \rightarrow (0,\infty)$ be as in Definition \ref{def:energy}, let $\ov{\psi}$ be its \textit{convex subadditive envelope} (see Definition \ref{def:convexsubadditiveenvelope}), and set
	\[
	\Theta:=\lim_{s\rightarrow +\infty} \frac{\ov{\psi}(s) }{s}.
	\]
We define the functional $\ov{\F}:= \mathfrak S \rightarrow[0,\infty]$ as
	\begin{equation*}
	\ov{\F}(E,\mu):= \int_{\pared E} \ov{\psi}\left( u \right)\d\H^{n-1} + \Theta\mu^s(\R^n)\,,
	\end{equation*}
	where we write $\mu = u\H^{n-1}\restr\pared E + \mu^s$ using the Radon-Nikodym decomposition.
\end{definition}

\begin{remark}
Notice that, since the function $s\mapsto\ov{\psi}(s)/s$ is non increasing (see Lemma \ref{cvxsubaddparacd}), $\Theta$ in the above definition is well defined. Moreover, notice that $\ov{\F}(E,\mu)=\infty$ if and only if $\mu(\R^n)=\infty$.
Indeed, this follows from the inequalities
\[
\Theta s\leq \ov{\psi}(s)\leq \psi(0)+\Theta s\,,
\]
which, in turn, give us
\[
\Theta \mu(\R^n)\leq \ov{\F}(E,\mu)\leq \psi(0)P(E)+\Theta\mu(\R^n)\,.
\]
\end{remark}

The following result is a slight variation\footnote{Mainly we can remove the assumption of weak*-convergence of
$|D\ca_{E_k}|$ to $|D\ca_E|$ thanks to the subadditivity of $\ov{\psi}$.} of \cite[Theorem 2.34]{AFP}.
For the reader's convenience, we include here the proof adopting their notation.

\begin{theorem}\label{thm:lsc}
$\ov{\F}$ is lower semi-continuous.
\end{theorem}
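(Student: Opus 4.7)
My plan is to reformulate $\ov{\F}$ as the integral of a positively $1$-homogeneous convex integrand against a vector-valued Radon measure on $\R^n$ and then invoke Reshetnyak's lower semi-continuity theorem as in \cite[Theorem 2.34]{AFP}. The key deviation from the standard proof there is that I cannot rely on weak-* convergence of $|D\ca_{E_k}|$ to $|D\ca_E|$, and so I will compensate with a monotonicity property of the integrand. This monotonicity will be a consequence of the parabolicity condition \eqref{paracond}, which $\ov{\psi}$ enjoys thanks to its convexity and subadditivity.

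First I would introduce the positively $1$-homogeneous extension $F:\R_+^2 \to [0,\infty)$ of $\ov{\psi}$ defined by
\[
F(x,y) := x\,\ov{\psi}(y/x) \quad\text{if } x > 0, \qquad F(0,y) := \Theta\, y, \qquad F(0,0) := 0.
\]
The two-sided estimate $\Theta s \leq \ov{\psi}(s) \leq \psi(0) + \Theta s$ makes $F$ into a convex, lower semi-continuous, positively $1$-homogeneous function on $\R_+^2$. For any $(E,\mu) \in \mathfrak{S}$ with $\ov{\F}(E,\mu) < \infty$, choosing the reference measure $\sigma := |D\ca_E| + \mu^s$ and writing $|D\ca_E| = p\,\sigma$, $\mu = q\,\sigma$, the $1$-homogeneity of $F$ together with the mutual singularity $\mu^s \perp |D\ca_E|$ yield
\[
\int_{\R^n} F(p,q)\, d\sigma = \int_{\pared E} \ov{\psi}(u)\, d\H^{n-1} + \Theta\,\mu^s(\R^n) = \ov{\F}(E,\mu).
\]
Moreover, $F$ is non-decreasing in its first argument: for $x > 0$,
\[
\partial_x F(x,y) = \ov{\psi}(s) - s\,\ov{\psi}'(s) \geq 0, \qquad s := y/x,
\]
the inequality being precisely \eqref{paracond} for $\ov{\psi}$ (equivalent to $s \mapsto \ov{\psi}(s)/s$ being non-increasing).

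Given $(E_k,\mu_k) \to (E,\mu)$ in $\mathfrak{S}$ with $\liminf_k \ov{\F}(E_k,\mu_k) < \infty$ (the only case of interest), the bound $\ov{\F} \geq \psi(0) P(\cdot)$ and local mass bounds on $(\mu_k)$ allow me, via Lemma \ref{weak*comp}, to extract a subsequence along which $|D\ca_{E_k}| \wt \mu^*$ for some $\mu^* \in \mathcal{M}^+_{loc}(\R^n)$, with $\mu^* \geq |D\ca_E|$ by the lower semi-continuity of the total variation. Then $\lambda_k := (|D\ca_{E_k}|, \mu_k)$ locally weakly-* converges to $\lambda^* := (\mu^*, \mu)$ as $\R_+^2$-valued Radon measures, and Reshetnyak's theorem applied to the convex $1$-homogeneous integrand $F$ gives
\[
\int F\!\left(\frac{d\lambda^*}{d|\lambda^*|}\right) d|\lambda^*| \leq \liminf_{k\to\infty} \int F\!\left(\frac{d\lambda_k}{d|\lambda_k|}\right) d|\lambda_k| = \liminf_{k\to\infty} \ov{\F}(E_k,\mu_k).
\]
Monotonicity of $F$ in its first slot combined with $|D\ca_E| \leq \mu^*$ then yields
\[
\ov{\F}(E,\mu) = \int F(d|D\ca_E|, d\mu) \leq \int F(d\mu^*, d\mu) = \int F\!\left(\frac{d\lambda^*}{d|\lambda^*|}\right) d|\lambda^*|,
\]
which combined with the previous display gives $\ov{\F}(E,\mu) \leq \liminf_k \ov{\F}(E_k,\mu_k)$ along the subsequence, and a standard subsubsequence argument concludes. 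The main delicate point will be verifying that this first-variable monotonicity of $F$ absorbs exactly the excess mass $\mu^* - |D\ca_E|$ arising from the failure of $|D\ca_{E_k}| \wt |D\ca_E|$, a role played here precisely by the subadditivity of $\ov{\psi}$.
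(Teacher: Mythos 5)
Your proposal is correct but follows a genuinely different route from the paper's own argument, so the comparison is worth recording. The paper proves Theorem~\ref{thm:lsc} bare-hands, mirroring \cite[Theorem~2.34]{AFP}: it writes $\ov{\psi}(s)=\sup_j\{a_j s+b_j\}$ with $a_j,b_j\geq 0$ (the non-negativity of the intercepts $b_j$ is the place where subadditivity enters), tests against finitely many compactly supported $g_j$ on disjoint open sets, and then in the passage to the $\liminf$ uses $\langle |D\ca_E|,g_j\rangle \leq \liminf_k\langle |D\ca_{E_k}|,g_j\rangle$ --- i.e.\ lower semi-continuity of the perimeter rather than continuity, which is precisely what the sign $b_j\geq 0$ permits. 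You instead package everything into the perspective function $F(x,y)=x\,\ov{\psi}(y/x)$, treat $(|D\ca_{E_k}|,\mu_k)$ as an $\R_+^2$-valued measure, and invoke Reshetnyak's lower semi-continuity theorem; since $|D\ca_{E_k}|$ need only converge to some $\mu^*\geq |D\ca_E|$, you absorb the excess with the monotonicity $\partial_x F(x,y)=\ov{\psi}(s)-s\,\ov{\psi}'(s)\geq 0$ ($s=y/x$), which by Lemma~\ref{cvxsubaddparacd} is again exactly subadditivity. The two proofs are the same phenomenon seen from two angles: in the paper, subadditivity appears as $b_j\geq 0$; in yours, as first-slot monotonicity of the homogeneous integrand. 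Your route is cleaner and more modular (one monotonicity lemma plus a quotable theorem) at the cost of importing Reshetnyak and of the auxiliary subsequence extraction of $\mu^*$; the paper's is longer but self-contained. Two small remarks on the write-up: the cited \cite[Theorem~2.34]{AFP} is not Reshetnyak's theorem for $1$-homogeneous integrands --- you presumably want \cite[Theorem~2.38]{AFP}, with $F$ extended to $\R^2$ by $+\infty$ outside $\R^+_2$ so that it is convex and lower semi-continuous on all of $\R^2$; and the inequality $\mu^*\geq |D\ca_E|$ is most cleanly justified by noting that $D\ca_{E_k}\wt D\ca_E$ as $\R^n$-valued measures (their total variations being bounded along the chosen subsequence and $\ca_{E_k}\to\ca_E$ in $L^1$), so that the total variation of the limit is $\leq$ the limit of the total variations.
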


\begin{proof}
Let $((E_k,\mu_k))_{k\in \N} \subset \mathfrak S$ be a sequence converging to $(E,\mu)$ in $\mathfrak S$, that is
	$\ca_{E_k} \rightarrow \ca_E$ in $L^1$ and $ \mu_k\wt \mu$ in $\mathcal{M}^+_{loc}(\R^n)$. Let
	\begin{align*}
	\mu_k&= u_k \H^{n-1}\restr \pared E_k+\mu_k^s,\qquad \quad \mu=u \H^{n-1}\restr \pared E +\mu^s.
	\end{align*}
In view of the characterization of $\ov{\psi}$ (see Lemma \ref{lem:supconv}), there exist families of real numbers $\{a_j\}_{j\in \N}$, $\{b_j\}_{j\in \N}$ with $a_j, b_j\geq 0$ and such that
	\[
	\ov{\psi}(s):= \sup_{j\in \N} \{a_j s+ b_j\},\ \qquad \quad \Theta = \sup_{j\in \N} \{a_j\}\,.
	\]
Consider $A_1,\ldots,A_m$ pairwise disjoint open, bounded subsets of $\R^n$. For any $g_j\in \mathcal{C}^1_c(A_j)$, with $0 \leq g_j\leq 1$, we have
	\begin{align*}
	&\int_{A_j\cap \pared E_k} \ov{\psi}(u_k) \d \H^{n-1}  + \Theta \mu_k^s(\R^n)\\
	&\hspace{1cm} \geq \int_{A_j\cap \pared E_k} g_j(a_j  u_k + b_j  ) \d \H^{n-1} + \int_{A_j} g_j a_j \d \mu_k^s\\
	&\hspace{1cm}=\int_{A_j \cap \pared E_k} g_j  a_j  u_k \d \H^{n-1}  + \int_{A_j\cap \pared E_k}g_j b_j  \d \H^{n-1} + \int_{A_j} g_j a_j \d \mu_k^s\\
	&\hspace{1cm}=\int_{A_j} g_j  a_j  \d \mu_k  + \int_{A_j\cap \pared E_k}g_j b_j  \d \H^{n-1}.
	\end{align*}
Adding with respect to $j$, we obtain
	\begin{align*}
	\ov{\F}(E_k,\mu_k) \geq \sum_{j=1}^m\int_{A_j} g_j  a_j  \d \mu_k  + \int_{A_j\cap \pared E_k}g_j b_j  \d \H^{n-1}.
	\end{align*}
Since $b_j \geq 0$ and $ \langle|D\ca_E|,g_j \rangle \leq \liminf_k \langle|D\ca_{E_k}|, g_j \rangle$ for all $j$ (here $\langle\cdot,\cdot\rangle$ is the duality pairing), taking the liminf we get 
	\begin{align}
	 \liminf_{k\rightarrow +\infty} \ov{\F}(E_k,\mu_k) &\geq \sum_{j=0}^m \int_{A_j } g_j  a_j  \d \mu  + \int_{A_j\cap \pared E} g_j b_j  \d \H^{n-1} \nonumber \\
	 &=\sum_{j=0}^m \int_{A_j \cap \pared E} g_j  (a_j u +b_j) \d\H^{n-1}  + \int_{A_j} g_j a_j  \d \mu^s \label{eqn: yeah!}.
	\end{align}
Let $N$ be a $|D\ca_E|-$negligible set on which $\mu^s$ is concentrated, and define the functions $\varphi_j:\R^n\rightarrow\R$ and $\varphi:\R^n\rightarrow\R$ as
	\begin{equation*}
	\varphi_j(x):=\left\{
	\begin{array}{ll}
	a_j u(x) + b_j & \text{for $x\in \pared E \setminus N$}\,,\\
	a_j &  \text{for $x\in N$}\,,
	\end{array}	
 	\right.
 	\end{equation*}
 	\begin{equation*}
 		\varphi(x):=\left\{
	\begin{array}{ll}
	\ov{\psi}(u(x) ) & \text{for $x\in \pared E\setminus N$}\,,\\
	\Theta &  \text{for  $x\in N$}\,,
	\end{array}	
 	\right.
	\end{equation*}
and set $\nu:= |D\ca_E| + \mu^s$. With this notation, equation \eqref{eqn: yeah!} can be written as
	\begin{align*}
	 \liminf_{k\rightarrow +\infty} \ov{\F}(E_k,\mu_k) &\geq\sum_{j=0}^m \int_{A_j} g_j \varphi_j \d \nu.
	\end{align*}
Taking the supremum among all the $g_j\in \mathcal{C}^1_c(A_j)$ with $0\leq g_j \leq 1$, we get (since $\varphi_j\geq 0$ for all $j$) 
	\begin{align*}
	 \liminf_{k\rightarrow +\infty} \ov{\F}(E_k,\mu_k) &\geq\sum_{j=0}^m \int_{A_j}   \varphi_j \d \nu.
	\end{align*}
By \cite[Lemma 2.35]{AFP}, we have that
	\[
	\int_{\R^n} \sup_j \{\varphi_j\} \d \nu = \sup\left\{ \sum_{j\in J} \int_{A_j} \varphi_j \d \nu \right\} 
	\]
where the supremum ranges over all finite sets $J\subset \N$ and all families of pairwise disjoint open and bounded sets $A_j\subset \R^ n$. Thus, we conclude that
	\begin{align*}
	 \liminf_{k\rightarrow +\infty} \ov{\F}(E_k,\mu_k) &\geq  \int_{\R^n}  \sup_j \{\varphi_j\} \d \nu= \int_{\R^n} \varphi \d \nu \\
	 & =  \int_{\pared E}  \ov{\psi}(u(x))  \d \H^{n-1} + \Theta\mu^s(\R^n) =\ov{\F} (E,\mu).
	\end{align*}
\end{proof}


\subsection{The relaxed functional}

We start by recalling the notion of \textit{relaxation} of a functional.
We refer to \cite{DM} and \cite{Bra} for a treatment of $\Gamma$-convergence.

\begin{definition}\label{def:relaxgen}
Let $(X,\tau)$ be a topological space and let $F:X\rightarrow[-\infty,+\infty]$. 
We define the \emph{relaxed functional} $\overline F:X\to [-\infty,+\infty]$ of $F$ as the largest lower semi-continuous functional
$G:X\rightarrow[-\infty,+\infty]$ such that $G\leq F$.
\end{definition}

The following characterization of the relaxed functional holds true.

\begin{proposition}
Let $(X,d)$ be a metric space and let $F:X\rightarrow[-\infty,+\infty]$.
Then, the relaxed functional $\overline F:X\to [-\infty,+\infty] $ of $F$
is characterized by the following two conditions:
\begin{enumerate}[i)]
\item (Liminf inequality) for every $x \in X$ and every sequence $(x_k)_{k\in  \mathbb N}$ such that $x_k \to x$, $$\overline F(x) \leq \liminf_{k \to \infty} F(x_k).$$
\item (Recovery sequences) for every $x \in X$ there exists a sequence $(x_k)_{k\in  \mathbb N}$ such that $x_k \to x$ and $$\limsup_{k \to \infty} F(x_k)\leq \overline F(x).$$
\end{enumerate}
\end{proposition}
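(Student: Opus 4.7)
The plan is to establish the characterization by proving the two directions separately: any functional $G$ satisfying (i) and (ii) coincides with $\overline{F}$ from Definition \ref{def:relaxgen}, and conversely $\overline{F}$ itself satisfies both properties.

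First I would prove the \emph{uniqueness} direction. Suppose $G:X\to[-\infty,+\infty]$ satisfies (i) and (ii). Applying (i) to the constant sequence $x_k=x$ yields $G(x)\leq F(x)$, so $G$ is a minorant of $F$. Moreover (i) is exactly the statement that $G$ is sequentially lower semi-continuous, which in a metric space coincides with lower semi-continuity. Hence $G$ is an l.s.c. minorant of $F$ and by maximality $G\leq \overline{F}$. Conversely, for $x\in X$ pick a recovery sequence $x_k\to x$ for $G$ given by (ii); since $\overline{F}$ is itself l.s.c. and satisfies $\overline{F}\leq F$,
\[
\overline{F}(x)\leq\liminf_{k\to\infty}\overline{F}(x_k)\leq\liminf_{k\to\infty} F(x_k)\leq\limsup_{k\to\infty} F(x_k)\leq G(x),
\]
so $G=\overline{F}$.

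Second I would show that $\overline{F}$ satisfies (i) and (ii) by exhibiting an explicit functional with those properties; by uniqueness it must agree with $\overline{F}$. A natural candidate is
\[
H(x):=\inf\bigl\{\liminf_{k\to\infty} F(x_k)\ :\ (x_k)_k\subset X,\ x_k\to x\bigr\}.
\]
For (ii) applied to $H$, by definition of the infimum, for every integer $m\geq 1$ there is a sequence $(x_k^m)_k\to x$ with $\liminf_k F(x_k^m)\leq H(x)+1/m$. The metric structure allows a diagonal extraction: choose an increasing $k_m$ so that $d(x_{k_m}^m,x)\leq 1/m$ and $F(x_{k_m}^m)\leq H(x)+2/m$, producing $y_m:=x_{k_m}^m\to x$ with $\limsup_m F(y_m)\leq H(x)$, which is (ii) for $H$.

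Third, to verify that $H$ satisfies (i), let $y_k\to x$ and assume without loss of generality that $\ell:=\liminf_k H(y_k)<+\infty$; by passing to a subsequence we may even suppose the $\liminf$ is a limit. By the already-proven property (ii) for $H$, for each $k$ there exists a sequence $(x_k^j)_j\to y_k$ with $\limsup_j F(x_k^j)\leq H(y_k)$. A second diagonal argument, again using the metric $d$, extracts indices $j_k$ such that $z_k:=x_k^{j_k}$ satisfies $d(z_k,x)\to 0$ and $\limsup_k F(z_k)\leq \ell$. By definition of $H$ this gives $H(x)\leq \ell$, proving (i). Hence $H$ is an l.s.c. minorant of $F$ satisfying both conditions, and by the uniqueness step $H=\overline{F}$, which concludes the proof. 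The main obstacle is the careful execution of the two diagonal extractions, but the metric structure of $X$ reduces them to standard bookkeeping with the distances $d(x_k^j,y_k)$ and $d(y_k,x)$.
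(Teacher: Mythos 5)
The paper does not prove this proposition --- it is recorded as a standard fact with references to \cite{DM} and \cite{Bra} --- so there is no internal proof to compare against; your approach via the formula $H(x)=\inf\{\liminf_k F(x_k):x_k\to x\}$ and diagonal extractions is the classical one. However, there is a genuine gap in your first paragraph. You assert that ``(i) is exactly the statement that $G$ is sequentially lower semi-continuous,'' but this is false: condition (i) bounds $G(x)$ by $\liminf_k F(x_k)$, not by $\liminf_k G(x_k)$, and since $G\leq F$ these liminfs are in general unrelated in the needed direction. For instance, with $F\equiv 0$ and $G(0)=0$, $G\equiv -1$ off the origin, $G$ satisfies (i) (trivially, as $G\leq 0$) but is not l.s.c. at $0$. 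Lower semi-continuity of $G$ is \emph{not} a consequence of (i) alone; it requires (ii) as well, via exactly the kind of diagonal extraction you carry out in your third paragraph. As written, the step ``$G$ is an l.s.c.\ minorant of $F$, hence $G\leq\overline{F}$'' is therefore unjustified.

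The same confusion resurfaces in your third paragraph: you announce that you are verifying condition (i) for $H$, but what you in fact prove (correctly) is that $H$ is sequentially lower semi-continuous, i.e.\ $H(x)\leq\liminf_k H(y_k)$. Condition (i) for $H$ --- that $H(x)\leq\liminf_k F(y_k)$ for every $y_k\to x$ --- is immediate from the definition of $H$ as an infimum and needs no diagonal. What you actually need, and do prove, is l.s.c.\ of $H$; this together with $H\leq F$ (take constant sequences) gives $H\leq\overline{F}$ by maximality, while for any l.s.c.\ $G\leq F$ one has $G(x)\leq\liminf G(x_k)\leq\liminf F(x_k)$ for all $x_k\to x$, hence $G\leq H$ and $\overline{F}\leq H$, so $H=\overline{F}$. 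Once this identification is in hand, ``$G$ satisfies (i) $\Rightarrow G\leq H=\overline{F}$'' is a one-line consequence of the infimum and does not require l.s.c.\ of $G$ at all, which repairs the uniqueness step cleanly. A minor additional remark: the diagonal in your second paragraph implicitly assumes $H(x)>-\infty$; the case $H(x)=-\infty$ requires a small separate argument (pick sequences driving $\liminf F$ below $-m$). The essential ingredient --- the diagonal extraction --- is present and correct; the proof needs to be reorganized so that l.s.c.\ is derived where it is actually used, rather than being read off from (i).
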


We now prove the main theorem of this section.

\begin{theorem}\label{thm:relax}
The functional $\ov{\F}$ is the relaxation of $\F$.
To be precise, the following hold:
\begin{itemize}
\item[(i)] for every $(E,\mu)\in \mathfrak S$ and every sequence
$\left((E_k,\mu_k)\right)_{k\in\N}\subset \mathfrak S$ such that $(E_k,\mu_k)\rightarrow (E,\mu)$,
we have that
\[
\ov{\F}(E,\mu)\leq\liminf_{k\rightarrow\infty}\F(E_k,\mu_k)\,,
\]
\item[(ii)] for every $(E,\mu)\in \mathfrak S$ there exists $\left((E_k,\mu_k)\right)_{k\in\N}\subset \mathfrak S$
with $(E_k,\mu_k)\rightarrow (E,\mu)$ such that
\[
\limsup_{k\rightarrow\infty}\F(E_k,\mu_k)\leq \ov{\F}(E,\mu)\,.
\]
\end{itemize}
\end{theorem}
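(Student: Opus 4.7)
The plan is to combine Theorem \ref{thm:lsc} with an explicit recovery sequence construction implementing the ``oscillation plus concentration'' recipe suggested by the necessary conditions of Corollary \ref{cor:neccond} and Remark \ref{degballs}.

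For part (i), I would first establish the pointwise bound $\ov{\F}\le \F$ on $\mathfrak S$. Indeed, if $\F(E,\mu)<+\infty$ then by definition $\mu=u\,\H^{n-1}\restr\pared E$ is absolutely continuous, $\mu^s=0$, and since $\ov{\psi}\le\psi$ (the convex subadditive envelope lies below $\psi$) we have $\ov{\F}(E,\mu)\le \F(E,\mu)$; otherwise the bound is trivial. Then for any sequence $(E_k,\mu_k)\to (E,\mu)$ in $\mathfrak S$, this pointwise bound combined with the lower semi-continuity furnished by Theorem \ref{thm:lsc} gives
\[
\liminf_{k\to\infty}\F(E_k,\mu_k)\ge \liminf_{k\to\infty}\ov{\F}(E_k,\mu_k)\ge \ov{\F}(E,\mu),
\]
which is exactly (i).

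For part (ii), I would first invoke Proposition \ref{prop:density} to reduce to a dense subclass of $\mathfrak S$: couples $(E,\mu)$ where $E$ is bounded and smooth, $u$ is bounded and essentially piecewise constant, and $\mu^s$ is a finite sum of atoms supported on $\pared E$. On such a couple the recovery sequence is built as the superposition of two local constructions supported on disjoint small pieces of $\pared E$. For the absolutely continuous part, I would use the characterization of $\ov{\psi}$ as a supremum of affine functions (Lemma \ref{lem:supconv}) together with the definition of the convex subadditive envelope to decompose $\ov{\psi}(s)=\lambda\psi(s_1)+(1-\lambda)\psi(s_2)$ at each plateau value $s$ of $u$ with $\lambda s_1+(1-\lambda)s_2=s$; the wriggling Lemma \ref{lem:wrigg} (in the spirit of Theorem \ref{thm: main theorem wriggling}) then replaces a fraction $\lambda$ of the corresponding piece of $\pared E$ by a wrinkled surface on which $u_k\equiv s_1$, and the complementary fraction by a wrinkled surface on which $u_k\equiv s_2$. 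By construction $\ca_{E_k}\to\ca_E$ in $L^1$ and the perimeter increase matches the convex combination, so that $\int\psi(u_k)\d\H^{n-1}$ converges locally to $\int\ov{\psi}(u)\d\H^{n-1}$. For each atom $m\delta_{x_0}$ in $\mu^s$, I would wriggle a small neighborhood of $x_0$ in $\pared E$ into a surface $\Sigma_k$ and choose a density $u_k\equiv t_k$ on $\Sigma_k$ so that $t_k\H^{n-1}(\Sigma_k)\to m$ while $\psi(t_k)\H^{n-1}(\Sigma_k)\to \Theta m$; the availability of such pairs rests on $\Theta=\lim_{s\to\infty}\ov{\psi}(s)/s$ and on subadditivity. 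A diagonalization using the metrizability of $\mathfrak S$ (Proposition \ref{prop:weakstarmetriz}, Definition \ref{topology}) then patches together the density reduction, the oscillation piece, and the concentration piece into a single sequence $(E_k,\mu_k)\to(E,\mu)$ with $\F(E_k,\mu_k)\to\ov{\F}(E,\mu)$.

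The main obstacle I foresee is the mutual compatibility of these three ingredients: the oscillation wriggling, the concentration wriggling, and the preexisting piecewise structure of $u$ must live on disjoint pieces of $\pared E$ whose scales are ordered so that none interferes with the others, yet are fine enough to ensure $\ca_{E_k}\to\ca_E$ in $L^1$ and $\mu_k\wt\mu$, while remaining coarse enough that each local energy approximation stays valid. The subadditivity of $\ov{\psi}$ is precisely what makes this juggling possible, and a careful telescopic choice of scales together with the diagonal extraction should close the argument.
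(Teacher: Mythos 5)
Part (i) of your proposal is correct and matches the paper's argument: the pointwise bound $\ov{\F}\le\F$ together with Theorem \ref{thm:lsc} immediately yields the liminf inequality. For part (ii), however, there are three concrete gaps.

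First, Proposition \ref{prop:density} only applies to \emph{absolutely continuous} couples $(E,u)$; it does not reduce general $\mu^s$ to a finite sum of atoms. The paper deals with the singular part entirely separately (Lemma \ref{technical lemma singular} and Proposition \ref{propo singular}), via a mollification of $\mu^s$ and a dyadic ball construction.

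Second, your proposed decomposition $\ov{\psi}(s)=\lambda\psi(s_1)+(1-\lambda)\psi(s_2)$ with $\lambda s_1+(1-\lambda)s_2=s$ cannot hold when $\ov{\psi}(s)<\psi(s)$: by convexity $\lambda\psi(s_1)+(1-\lambda)\psi(s_2)\ge\psi(s)>\ov{\psi}(s)$. The convex combination identity needs to account for the area \emph{increase} produced by wriggling, and then one may as well use the paper's one-parameter mechanism: since $\ov{\psi}$ is linear on $[s_0,\infty)$ with $\ov{\psi}(s)=(s/s_0)\psi(s_0)$, one simply replaces a piece of constant density $u_i^k>s_0$ by a wriggled copy with area multiplied by $r_i^k=u_i^k/s_0$ and density fixed at $s_0$ (cf.\ \eqref{eq:erre}, \eqref{eq:vki} in Proposition \ref{propo ac}). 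No two-density, two-slope optimization is needed.

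Third, your treatment of the concentration piece fails for the same reason. If $t_k\H^{n-1}(\Sigma_k)\to m$ with $t_k\to\infty$, then $\psi(t_k)\H^{n-1}(\Sigma_k)=m\,\psi(t_k)/t_k\to m\lim_{s\to\infty}\psi(s)/s$, which in general \emph{exceeds} $\Theta m=m\lim_{s\to\infty}\ov{\psi}(s)/s$ (they agree only when $\psi\equiv\ov{\psi}$). The paper avoids this by approximating the singular measure first in \emph{$\ov{\F}$}-energy with tiny balls carrying high densities (using $\ov{\psi}(s)/s\searrow\Theta$), and only then applying the wriggling/density-reduction of Proposition \ref{propo ac} to convert to an $\F$-energy approximation. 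Finally, the paper glues the absolutely continuous recovery sequence and the singular one by symmetric difference with an $\varepsilon$-rescaling to preserve the total mass; a diagonalization as you describe is not quite sufficient because you must ensure that the two pieces can be placed disjointly without affecting either energy approximation.
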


The proof of the above theorem is long and will be divided into several steps. Let us first sketch it briefly. The liminf inequality will be a consequence of Theorem \ref{thm:lsc} and the fact that $\ov{\psi}\leq\psi$. In order to construct recovery sequences, the case $\psi=\ov{\psi}$ will be easier to deal with so let us assume here that there exists $s_0\in(0,\infty)$ such that $\psi=\ov{\psi}$ in $[0,s_0]$ and $\ov{\psi}<\psi$ in $(s_0,\infty)$ (see Remark \ref{recipe}). We will approximate the two terms of $\ov{\F}$ separately. To explain how we deal with the first one, for the sake of simplicity let us consider a smooth set $E\subset\R^n$ and a constant adatom density $u\equiv c > x_0$. We construct a recovery sequence $((E_k,u_k))_{k\in\N}\in \mathfrak S$ as follows:
write $c=r s_0$ for some $r>1$. Then, since $\ov{\psi}$ is \emph{linear} in $[s_0,\infty)$,
we have
\[
\ov{\psi}(c)=\ov{\psi}(r s_0)=r\ov{\psi}(s_0)=r\psi(s_0)\,.
\]
Therefore take $u_k\equiv s_0$ and we let $(E_k)_{k\in\N}$ be a sequence of smooth sets converging to $E$ in $L^1$ and such that
\[
\H^{n-1}(\partial E_k)\rightarrow r\H^{n-1}(\partial E)\,.
\] 
This will be done by a wriggling process (Lemma \ref{lem:wrigg}) similiar to the one pictured in Figure \ref{pic:wriggle} for the unit circle.

\begin{figure}[h!]
\begin{center}
\includegraphics[scale=0.50]{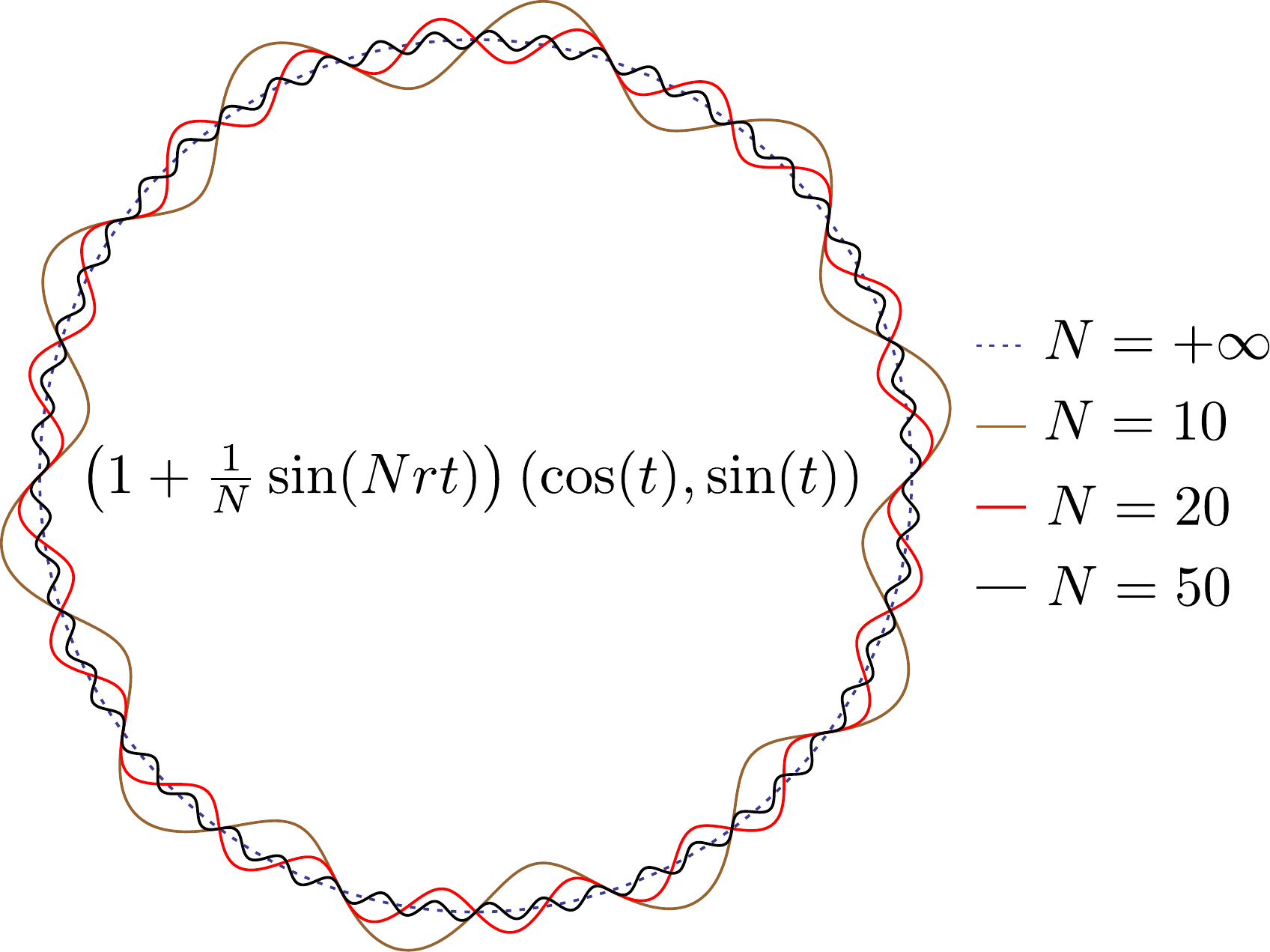}  
\caption{Approaching the unit circle by curves with constant but bigger perimeter.
Notice that the recovery sequence here exhibits features similar to numerical simulations of the evolution equation in \cite{RV2}.}
\label{pic:wriggle}
\end{center}
\end{figure}

To treat the second term we are led by the following observation: a couple $(\varnothing, \delta_0)$ can be recovered by shrinking spheres with increasing adatom density. This, combined with the fact that any $\mu^s$ can be approximated by a sum of such Dirac deltas and with a suitable mollification argument, will allow us to recover any $(\varnothing,\mu^s)$ (see Proposition \ref{propo singular}).
In a last step, we show that we can combine these two approximations to get close to any such $(E,\mu)$ as much as we want.

We now prove a density result in $\mathfrak S$ allowing us to restrict the analysis to the above scenario.

\begin{proposition}\label{prop:density}
Let $(E,u)\in \mathfrak S$. Then, there exists a sequence of bounded smooth sets $(E_k)_{k\in\N}$ and a sequence of functions $(u_k)_{k\in\N}$ with $u_k\in L^1(\partial E_k,\R_+)$ Borel, with the following properties:
\begin{itemize}
\item[(i)] for every $k\in\mathbb{N}$ there exists a family $(M^k_i)_{i\in\N}\subset\partial E_k$ of smooth manifolds with Lipschitz boundary, with
$\mathcal{H}^{n-1}\left(\partial E_k\setminus \bigcup_{i\in\N}M^k_i\right)=0$,
such that $u_k$ is constant on each $M^k_i$, for every $i\in\N$,
\item[(ii)] $\ca_{E_k}\rightarrow \ca_E$ in $L^1$, and $|D\ca_{E_k}|\wt|D\ca_E|$ as $k\to\infty$,
\item[(iii)] setting $\mu_k:=u_k \H^{n-1}\restr\partial E_k$ and $\mu:=u \H^{n-1}\restr\partial^* E$, we get $\mu_k\wt\mu$ and $\mu_k(\R^n)\rightarrow\mu(\R^n)$ as $k\to\infty$,
\item[(iv)] $\ov\F(E_k,u_k)\rightarrow\ov\F(E,u)$ as $k\to\infty$.
\end{itemize}
\end{proposition}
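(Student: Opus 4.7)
The plan is to combine three levels of approximation via a diagonal extraction. First, I would approximate $E$ by bounded smooth sets $(F_j)_j$ with $\ca_{F_j}\to\ca_E$ in $L^1$ \emph{and} $|D\ca_{F_j}|\wt|D\ca_E|$. The latter is stronger than just $P(F_j)\to P(E)$, but follows from it together with the vector-valued weak-* convergence $D\ca_{F_j}\wt D\ca_E$, which is automatic from the $L^1$ convergence. Such $F_j$ can be built in the standard way by mollifying $\ca_E$, selecting level sets of the mollification with smooth boundary via a Sard/coarea argument, and truncating by a large ball; a preliminary reduction to bounded $E$ (by intersecting with a large ball and controlling the error using $|E|,P(E)<\infty$) is harmless.

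Second, since $\H^{n-1}\restr\pared E$ is a finite Radon measure and $C_c(\R^n)$ is dense in $L^1$ of any Radon measure, pick $\varphi_n \in C_c(\R^n, \R_+)$ with $\varphi_n \to u$ in $L^1(\H^{n-1}\restr\pared E)$. Third, for each pair $(n,j)$, since $\partial F_j$ is a smooth compact hypersurface on which $\varphi_n$ is uniformly continuous, partition $\partial F_j$ into finitely many smooth pieces $M^{n,j,\ell}_i$ with Lipschitz boundaries and diameter at most $1/\ell$ (using a finite atlas with a fine grid in each chart), and define $u^\ell_{n,j}$ to be the piecewise constant function equal to $\varphi_n(x_i)$ on $M^{n,j,\ell}_i$ for some choice $x_i\in M^{n,j,\ell}_i$. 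Uniform continuity yields $\|u^\ell_{n,j}-\varphi_n\|_{L^\infty(\partial F_j)}\to 0$ as $\ell\to\infty$. A diagonal extraction $(E_k,u_k):=(F_{j_k},u^{\ell_k}_{n_k,j_k})$, with $n_k\to\infty$ slowly, $j_k$ large depending on $n_k$, and $\ell_k$ large depending on $(n_k,j_k)$, produces the desired sequence; property (i) holds by construction.

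To verify (ii)--(iv), I would test against $g\in C_c(\R^n)$ and decompose $\int_{\partial E_k} g\, h(u_k)\d\H^{n-1} - \int_{\pared E} g\, h(u)\d\H^{n-1}$, for $h=\mathrm{id}$ (giving (iii)) or $h=\ov\psi$ (giving (iv)), as a sum of three pieces corresponding to the three approximation levels: a piecewise-constant error on $\partial F_{j_k}$, controlled by $\|u^{\ell_k}_{n_k,j_k}-\varphi_{n_k}\|_{L^\infty(\partial F_{j_k})}$ times $P(F_{j_k})$; a weak-* error $\int g\, h(\varphi_{n_k})\d|D\ca_{F_{j_k}}| - \int g\, h(\varphi_{n_k})\d|D\ca_E|$ vanishing as $j_k\to\infty$ thanks to $|D\ca_{F_{j_k}}|\wt|D\ca_E|$ and continuity of $h(\varphi_{n_k})$; and an $L^1$-approximation error controlled by $\int_{\pared E}|h(\varphi_{n_k})-h(u)|\d\H^{n-1}$. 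Each piece vanishes along the diagonal scheme. The total mass statement in (iii) and the energy convergence (iv) follow by taking $g\equiv 1$ on a common large ball containing all the supports, with the further decomposition $\ov\psi(\varphi_n) = \ov\psi(0) + [\ov\psi(\varphi_n) - \ov\psi(0)]$ in order to test the (now compactly supported) second term against $|D\ca_{F_{j_k}}|\wt|D\ca_E|$, while the constant part is handled by $P(F_{j_k})\to P(E)$.

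The main obstacle is the $h=\ov\psi$ case of the third piece, where controlling $\int_{\pared E}|\ov\psi(\varphi_n)-\ov\psi(u)|\d\H^{n-1}$ by $\|\varphi_n-u\|_{L^1(\pared E)}$ requires a \emph{global} Lipschitz bound on $\ov\psi$. This is precisely where the structure of $\ov\psi$ pays off: being convex on $\R_+$ with asymptotic slope $\Theta$, its right-derivative is nondecreasing and bounded above by $\Theta$; combined with monotonicity of $\ov\psi$ (inherited from $\psi$ non-decreasing in Theorem~\ref{mainthm}), this makes $\ov\psi$ globally $\Theta$-Lipschitz on $\R_+$. Without this feature, only an $L^\infty$ approximation of $u$ would work --- impossible in general for an $L^1$ adatom density --- so the convex subadditive envelope is precisely the right object to make the density argument close.
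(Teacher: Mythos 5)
Your proposal is correct, but it follows a genuinely different route from the paper. The paper works with a \emph{single} approximating sequence: it builds smooth sets $E_k$ with $P(E_k)\to P(E)$, partitions the ambient space into $k^n$ cubes of side $2L/k$ (using \cite{Falconer} to null out boundary intersections), and defines $u_k$ on $\partial E_k\cap(Q^k_j)^\circ$ to be the \emph{local average} $\fint_{\pared E\cap Q^k_j} u\,\d\H^{n-1}$. Weak-$*$ convergence then comes from uniform continuity of test functions together with the near-$1$ perimeter ratios $\H^{n-1}(\partial E_k\cap Q^k_j)/\H^{n-1}(\pared E\cap Q^k_j)$, and (iv) is proven one-sidedly: Jensen's inequality (using convexity of $\ov\psi$ against the averaging) gives $\ov\F(E_k,u_k)\le\ov\F(E,u)+O(1/k)$, while the reverse inequality is obtained by invoking the lower semicontinuity Theorem~\ref{thm:lsc}. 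Your three-level diagonal (smooth $F_j$, $C_c$-density $\varphi_n\to u$ in $L^1(\H^{n-1}\restr\pared E)$, then piecewise-constant $u^\ell_{n,j}$ on $\partial F_j$) instead leverages the \emph{global $\Theta$-Lipschitz bound} on $\ov\psi$ — a consequence of convexity plus the asymptotic slope $\Theta$ and monotonicity (which, incidentally, is already forced by Definition~\ref{def:energy}, since convexity with $\psi(0)<\psi(s)$ implies $\psi$ non-decreasing) — to close the $L^1$ approximation piece, and gives a direct two-sided estimate without appealing to lower semicontinuity. Your decomposition $\ov\psi(\varphi_n)=\ov\psi(0)+[\ov\psi(\varphi_n)-\ov\psi(0)]$ to handle the non-compactly-supported integrand is exactly the right fix for testing against $|D\ca_{F_j}|\wt|D\ca_E|$. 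Both proofs are sound; what the paper's local-averaging construction buys, which yours does not automatically, is that pointwise bounds on $u$ are inherited by $u_k$ (e.g.\ $u\ge s_0\Rightarrow u^k_i\ge s_0$), and this extra structure is explicitly reused in the proof of Theorem~\ref{thm: main theorem wriggling} in Appendix~D. Your construction is more modular and self-contained for Proposition~\ref{prop:density} itself, but would require an extra truncation step to recover that pointwise-preservation feature.
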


\begin{proof}
\textbf{Step one:} \emph{approximation of a bounded set.} Assume that $E$ is bounded and let $Q\subset\R^n$ be a closed cube
with edges of length $L$ parallel to the coordinate axes such that $E\subset Q$. By a standard argument (see \cite[Theorem 3.42]{AFP}), it is possible to construct a sequence of bounded smooth sets $(E_k)_{k\in\N}$ with $E_k\Subset Q$ such that
\begin{equation}\label{eq:conv}
\ca_{E_k}\rightarrow \ca_E\quad \text{in } L^1\,,\quad\quad|D\ca_{E_k}|\wt|D\ca_E|\,,\quad\quad P(E_k)\rightarrow P(E)\,.
\end{equation}
For every $k\in\N$, write
\[
Q=\bigcup_{j=1}^{k^n} Q^k_i\,,
\]
where each $Q^k_j$ is a closed cube of side $2L/k$ with edges parallel to the coordinate axes.
By \cite{Falconer}, up to an arbitrarily small rotation of the $E_k$'s and of $E$, it is possible to assume that
\begin{equation}\label{eq:null}
\H^{n-1}\left(\, \partial E_k \cap \bigcup_{j=1}^{k^n}\partial Q^k_j \,\right)=0\,,\quad\quad
\H^{n-1}\left(\, \partial^* E \cap \bigcup_{j=1}^{k^n}\partial Q^k_j \,\right)=0\,
\end{equation}
for every $k\in\N$.
Notice that $\partial E_k\cap (Q_j^k)^\circ$, where $(Q_j^k)^\circ$ denotes the open cube, is made by at most countably many smooth manifolds with Lipschitz boundary. Call them $(M^k_i)_{i\in\N}$.
By using \eqref{eq:conv}, together with \eqref{eq:null}, up to a subsequence of the $E_k$'s, it is also possible to assume that
\begin{equation}\label{eq:small}
\sum_{j\in I_k}\left|\,\frac{\H^{n-1}(\partial E_k\cap Q^k_j)}{\H^{n-1}(\partial^* E\cap Q^k_j)} -1 \,\right|<\frac{1}{k}\,,\quad\quad\quad
\sum_{j\in J_k}\H^{n-1}(\partial E_k\cap Q^k_j)<\frac{1}{k}\,,
\end{equation}
where we set
\[
I_k:=\{\, j\in\{1,\dots, k^n\} \,:\, \H^{n-1}(\partial^* E\cap Q^k_j)\neq0  \,\}\,
\]
and
\[
J_k:=\{\, j\in\{1,\dots, k^n\} \,:\, \H^{n-1}(\partial^* E\cap Q^k_j)=0  \,\}\,.
\]
Finally, let us define the function $u_k:\partial E_k\rightarrow\R$ as
\begin{equation}\label{locaverage}
u_k(x):=\fint_{\partial^* E\cap Q_j^k}u\,\H^{n-1}=\frac{1}{\H^{n-1}(\partial^* E\cap Q_j^k)}\int_{\partial^* E\cap Q_j^k}u\,\H^{n-1}\,,
\end{equation}
if $x\in \partial E_k\cap (Q_j^k)^\circ$, with $j\in I_k$, and $u_k(x):=0$ otherwise.
Notice that $u_k$ is not defined only on a set of $\H^{n-1}$ measure zero.

Let $\mu_k:=u_k\,\H^{n-1}\restr\partial E_k$ and $\mu:=u\,\H^{n-1}\restr\partial^* E$.
We want to prove that $\mu_k\wt\mu$. Take $\varphi\in \mathcal C_c(\R^n)$ and fix $\delta>0$. Using the uniform continuity of $\varphi$, it is possible to find $\bar{k}\in\N$ such that, for every $k\geq\bar{k}$, it holds $|\varphi(x)-\varphi(y)|<\delta$ whenever $x,y\in Q^k_j$ and
for every $j=1,\dots,k^n$. Let us denote by $x^k_j$ the center of the cube $Q^k_j$.
Then we have that
\begin{align}\label{eq:vicu}
&\left|\, \int_{\partial E_k}\varphi u_k\,\d\H^{n-1} - \int_{\partial^* E}\varphi u\,\d\H^{n-1}  \,\right| \nonumber\\
&\hspace{0.5cm}\leq \sum_{j=1}^{k^n}
\left|\, \int_{\partial E_k\cap Q^k_j}\varphi u_k\,\d\H^{n-1} - \int_{\partial^* E\cap Q^k_j}\varphi u\,\d\H^{n-1}  \,\right|
    \nonumber \\
&\hspace{0.5cm}=\sum_{j\in I_k}
\left|\, \int_{\partial E_k\cap Q^k_j}\varphi u_k\,\d\H^{n-1} - \int_{\partial^* E\cap Q^k_j}\varphi u\,\d\H^{n-1}  \,\right|
    \nonumber \\
&\hspace{0.5cm}= \sum_{j\in I_k}\left|\, \left( \fint_{\partial^* E\cap Q^k_j}u\,\d\H^{n-1} \,\right)
    \left(\, \int_{\partial E_k\cap Q^k_j}\varphi\,\d\H^{n-1}\,\right)
    - \int_{\partial^* E\cap Q^k_j}\varphi u\,\d\H^{n-1}  \,\right| \nonumber \\
&\hspace{0.5cm}= \sum_{j\in I_k}\Biggl|\, \left( \fint_{\partial^* E\cap Q^k_j}u\,\d\H^{n-1} \,\right)
    \left(\, \int_{\partial E_k\cap Q^k_j} (\varphi - \varphi(x^k_j))\,\d\H^{n-1} \right)\nonumber \\
&\hspace{2cm}+\left( \fint_{\partial^* E\cap Q^k_j}u\,\d\H^{n-1} \,\right)\varphi(x^k_j)\H^{n-1}(\partial E_k\cap Q^k_j)\, \nonumber \\
&\hspace{3cm} - \int_{\partial^* E\cap Q^k_j}(\varphi - \varphi(x^k_j)) u\,\d\H^{n-1}
    -  \varphi(x^k_j)\int_{\partial^* E\cap Q^k_j} u\,\d\H^{n-1}\,\Biggr| \nonumber \\
&\hspace{0.3cm}\leq \sum_{j\in I_k} \left( \int_{\partial^* E\cap Q^k_j}u\,\d\H^{n-1} \,\right) \Bigg[\,
	\left(\frac{\H^{n-1}(\partial E_k\cap Q^k_j)}{\H^{n-1}(\partial^* E\cap Q^k_j)}+1 \,\right)\delta \nonumber \\
&\hspace{6cm}+\left|\frac{\H^{n-1}(\partial E_k\cap Q^k_j)}{\H^{n-1}(\partial^* E\cap Q^k_j)}-1 \,\right||\varphi(x_j^k)| \,\Bigg] \nonumber \\
&\hspace{0.3cm}\leq \left(2\delta+\frac{\delta+\sup|\varphi|}{k} \right)\|u\|_{L^1(\partial^* E)}\,,
\end{align}
where in the first step we used \eqref{eq:null} and in the last one the first condition in \eqref{eq:small}.
Letting $k\rightarrow\infty$ we get that
\[
\left|\, \int_{\partial E_k}\varphi u_k\,\d\H^{n-1} - \int_{\partial^* E}\varphi u\,\d\H^{n-1}  \,\right| \to 0\,.
\]
Since $\varphi \in \mathcal C_c(\R^n)$ is arbitrary we conclude that $\mu_k\wt\mu$.
Moreover, by taking $\varphi\in\mathcal{C}_c(\R^n)$ such that $\varphi\equiv1$ in $Q$, we have that $\mu_k(\R^n)\rightarrow\mu(\R^n)$.

Finally, we claim that $\ov{\F}(E_k,u_k)\rightarrow\ov{\F}(E,u)$ as $k\rightarrow\infty$. Indeed,
\begin{align*}
\ov{\F}(E_k,u_k)-\ov{\F}(E,u)&=\int_{\partial E_k} \ov{\psi}(u_k)\,\d\H^{n-1} - \int_{\partial^* E} \ov{\psi}(u)\,\d\H^{n-1} \\
&= \sum_{j\in I_k}
    \int_{\partial E_k\cap Q^k_j} \ov{\psi}(u_k)\,\d\H^{n-1}-\int_{\partial^* E\cap Q^k_j} \ov{\psi}(u)\,\d\H^{n-1}\\
&\hspace{2cm}+\psi(0)\sum_{j\in J_k} \H^{n-1}(\partial E_k\cap Q^k_j)\\
&\leq \sum_{j\in I_k}
    \left(\frac{\H^{n-1}(\partial E_k\cap Q^k_j)}{\H^{n-1}(\partial^* E\cap Q^k_j)}-1 \,\right)\,
    \int_{\partial^* E\cap Q^k_j} \ov{\psi}(u)\,\d\H^{n-1}\\
&\hspace{2cm}+\psi(0)\sum_{j\in J_k} \H^{n-1}(\partial E_k\cap Q^k_j)\\
&\leq \frac{\psi(0)(1+P(E))+\Theta\|u\|_{L^1(\partial^* E)}}{k}\,,
\end{align*}
where in the second step we used Jensen's inequality, while in the last one we invoked \eqref{eq:small} and the fact that
$\ov{\psi}(u)\leq \psi(0)+\Theta u$. Letting $k\rightarrow\infty$ we get that
\[
\limsup_{k\to\infty}\ov{\F}(E_k,u_k)\leq \ov{\F}(E,u)\,.
\]
The other inequality follows from the lower semi-continuity of the functional $\ov{\F}$ (see Theorem \ref{thm:lsc}).
We thus conclude the proof of this step.\\

\textbf{Step two:} \emph{reduction to bounded sets.} Let $E$ be a set of finite perimeter, and assume that $E$ is not bounded.
Using the coarea formula (see \cite[Theorem 2.93]{AFP}), for every $k\in\N$ it is possible to find a sequence
$(R_k)_{k\in\N}$ with $R_k\nearrow\infty$, such that $F_k:=E\cap B_{R_k}(0)$ satisfies
\[
\left\|\ca_{F_k}-\ca_{E}\right\|_{L^1}<\frac{1}{2k}\,,
\quad\quad\quad
P(F_k)=P(E,B_{R_k}(0))+\H^{n-1}(\partial B_{R_k}(0)\cap E)\,,
\]
with $\H^{n-1}(\partial B_{R_k}(0)\cap E)<1/2k$.
Moreover, extracting if necessary a (not relabeled) subsequence , we can also assume that
\[
\int_{\partial^* E\setminus B_{R_k}(0)} u\,\d\H^{n-1}<\frac{1}{2k}\,.
\]
Define $\widetilde{u}_k:\partial^* F_k\rightarrow\R$ as
\[
\widetilde{u}_k(x):=
\left\{
\begin{array}{ll}
u(x) & \text{ if } x\in\partial^* E\cap B_{R}(0) \,,\\ 
0 & \text{ otherwise }\,.
\end{array}
\right.
\]
Then
\begin{align*}
|\ov{\F}(F_k,\widetilde{u}_k)-\ov{\F}(E,u)|&=\left|\, \int_{\partial^* E}\ov{\psi}(u)\,\d\H^{n-1}
    -\int_{\partial^* F_k}\ov{\psi}(\widetilde{u}_k)\,\d\H^{n-1}  \,\right|\\
&=\left|\, \int_{\partial B_{R_k}\cap E}\psi(0)\,\d\H^{n-1}
    +\int_{\partial^* E\setminus B_{R_k}(0)} \ov{\psi}(u)\,\d\H^{n-1} \,\right|   \\
&\leq\H^{n-1}(\partial B_{R_k}\cap E)\psi(0)+\int_{\partial^* E\setminus B_{R_k}(0)} \ov{\psi}(u)\,\d\H^{n-1}\\
&\leq \frac{2\psi(0)+\Theta}{2k}\,,
\end{align*}
where in the last step we used again the fact that $\ov{\psi}(u)\leq \psi(0)+\Theta u$. Moreover, for every $\varphi\in\mathcal{C}_c(\R^n)$, we have
\begin{align}\label{eq:convapproxmeas}
\left|\, \int_{\partial^* E}\varphi u\,\d\H^{n-1}-\int_{\partial^* F_k} \varphi \widetilde{u}_k\,\d\H^{n-1}  \,\right|
    &=\left|\, \int_{\partial^* E\setminus B_{R_k}(0)} \varphi u\,\d\H^{n-1} \,\right| \nonumber \\
&\leq \frac{\sup|\varphi|}{2k}\,.
\end{align}
Set $\widetilde{\mu}_k:= \widetilde{u}_k\H^{n-1}\restr\partial^* F_k$ and $\mu:= u\H^{n-1}\restr\partial^* E$.
Up to a (not relabeled) subsequence, we can assume that $\mathrm{d}_{\mathcal{M}}(\widetilde{\mu}_k,\mu)\leq 1/2k$.
In particular, \eqref{eq:convapproxmeas} gives us that $\widetilde{\mu}(\R^n)\rightarrow\mu(\R^n)$.
Now, by Step one, for every $k\in\N$ let $(E_k,u_k)\in \mathfrak S$, with $E_k$ smooth and bounded, be such that
\[
\|\ca_{E_k}-\ca_{F_k}\|_{L^1}<\frac{1}{2k}\,,\quad
\mathrm{d}_{\mathcal{M}}(\widetilde{\mu}_k,\mu_k)\leq\frac{1}{2k}\,,\quad
|\ov{\F}(F_k,\widetilde{u}_k)-\ov{\F}(E_k,u_k)|\leq\frac{1}{2k}\,,
\]
where $\mu_k:=u_k\H^{n-1}\restr\partial E_k$. Moreover, $\mu_k(\R^n)\rightarrow\mu(\R^n)$.
So,the sequence $((E_k,u_k))_{k\in\N}$ satisfies the requirements of the lemma.
\end{proof}

We now carry on the wriggling construction. The idea is to wriggle by a suitable factor each piece $M^k_i$ where $u_k$ is constant, staying in a small tubular neighborhood and leaving its boundary untouched, so that we can glue all the pieces together afterwards.

\begin{lemma}\label{lem:wrigg}
Let $M\subset\R^n$ be a bounded smooth $(n-1)$-dimensional manifold having Lipschitz boundary such that
$\H^{n-1}(M)<\infty$, and let $r\geq1$.
Then, there exist a sequence of smooth $(n-1)$-dimensional manifolds $(N_k)_{k\in\N}$ such that
\[
\partial N_k=\partial M\,,\quad\quad N_k\subset(M)_{1/k}\,,\quad\quad \H^{n-1}(N_k)\rightarrow r\H^{n-1}(M)\,,
\]
where $(M)_{1/k}:=\{\, x\in\R^n \,:\, \mathrm{d}(x,M)<1/k \,\}$ and $\mathrm{d}(x,M):=\inf\{\, |x-y| \,:\, y\in M \,\}$.
\end{lemma}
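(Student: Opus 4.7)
I would realize $N_k$ as a small-amplitude, high-frequency normal perturbation of $M$, tuned so that the added oscillations multiply the area by exactly $r$ in the limit. First, choose $\alpha=\alpha(r)\geq 0$ with
\[
C(\alpha):=\frac{1}{2\pi}\int_0^{2\pi}\sqrt{1+\alpha^2\cos^2 t}\,\mathrm dt = r;
\]
this is possible because $C:[0,\infty)\to[1,\infty)$ is a continuous strictly increasing bijection. Let $\nu$ be a smooth unit normal field on $M$ (constructed in orientable charts, which suffices since the whole construction is local). Set
\[
\Psi_k(x):=x+\phi_k(x)\,\nu(x),\qquad N_k:=\Psi_k(M),
\]
where $\phi_k:M\to\R$ is smooth, satisfies $\|\phi_k\|_\infty\le 1/k$, and vanishes on a neighborhood of $\partial M$ in $M$. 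This immediately yields $N_k\subset(M)_{1/k}$ and $\partial N_k=\partial M$; for $k$ large $\Psi_k$ is a diffeomorphism onto its image since the tubular neighborhood map is a diffeomorphism for small normal displacements.

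\textbf{Construction of $\phi_k$.} Cover the compact set $\overline M$ by finitely many charts $(U_j,\varphi_j)$ with $\varphi_j=(\xi_j^{(1)},\dots,\xi_j^{(n-1)}):U_j\to V_j\subset\R^{n-1}$, and let $\{\chi_j\}$ be a subordinate smooth partition of unity. Using that $\partial M$ is Lipschitz, fix a smooth cutoff $\rho_k:M\to[0,1]$ equal to $1$ on $\{x:\mathrm{dist}(x,\partial M)\geq 2/k\}$ and to $0$ on $\{x:\mathrm{dist}(x,\partial M)\leq 1/k\}$; the Lipschitz hypothesis yields $\H^{n-1}(\{\rho_k<1\})=O(1/k)$. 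Define
\[
\phi_k(x):=\frac{\alpha}{k}\,\rho_k(x)\sum_{j}\frac{\chi_j(x)}{|\nabla^M\xi_j^{(1)}(x)|}\sin\!\bigl(k^2\xi_j^{(1)}(x)\bigr).
\]
Up to rescaling $k$ this has $\|\phi_k\|_\infty\le 1/k$. By the area formula $\H^{n-1}(N_k)=\int_M J\Psi_k\,\mathrm d\H^{n-1}$, and in a local orthonormal frame of $T_xM$ the matrix-determinant lemma gives
\[
J\Psi_k=|\det(I-\phi_k W)|\sqrt{1+|(I-\phi_k W)^{-1}\nabla^M\phi_k|^2}=(1+O(1/k))\sqrt{1+|\nabla^M\phi_k|^2}
\]
uniformly on $\overline M$, where $W$ is the (uniformly bounded) Weingarten map of $M$.

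\textbf{Convergence and conclusion.} On $\{\rho_k=1\}$ one computes $\nabla^M\phi_k=\alpha\sum_j\chi_j\cos(k^2\xi_j^{(1)})\,\widehat{\nabla^M\xi_j^{(1)}}+O(1/k)$, with $\widehat v:=v/|v|$. A Riemann--Lebesgue argument yields the weak-$*$ convergence $\sqrt{1+|\nabla^M\phi_k|^2}\wt C(\alpha)=r$ in $L^\infty(\{\rho_k=1\})$, hence $\int_{\{\rho_k=1\}}J\Psi_k\,\mathrm d\H^{n-1}\to r\,\H^{n-1}(M)$. On $\{\rho_k<1\}$, the Jacobian is uniformly bounded by a constant (we control $|\nabla^M\phi_k|$ there by keeping the amplitude $O(1/k)$ even where the cutoff is active) and the region has $\H^{n-1}$-measure $O(1/k)$, so its contribution vanishes. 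Combining, $\H^{n-1}(N_k)\to r\,\H^{n-1}(M)$, as required.

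\textbf{Main obstacle.} The delicate point is the partition of unity: on chart overlaps, $|\nabla^M\phi_k|^2$ contains cross terms of the form $\chi_i\chi_j\cos(k^2\xi_i^{(1)})\cos(k^2\xi_j^{(1)})$, and these must oscillate out in the weak limit, leaving only the diagonal terms whose $L^\infty$-weak-$*$ limit is $\alpha^2/2$ and produces $C(\alpha)=r$ after taking the square root. This is arranged by performing, if necessary, a small linear change of coordinates inside each chart so that on every overlap $U_i\cap U_j$ the phase difference $\xi_i^{(1)}-\xi_j^{(1)}$ has nonvanishing gradient; a stationary-phase/two-variable Riemann--Lebesgue argument then forces the cross terms to zero in the weak-$*$ limit. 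An alternative that sidesteps the issue is to refine the cover to have essentially disjoint supports, with smooth interpolation on thin strips of $\H^{n-1}$-measure $o(1)$; both approaches are standard but each requires careful bookkeeping.
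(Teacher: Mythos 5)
Your approach differs genuinely from the paper's. You fix the amplitude-to-slope ratio $\alpha$ in advance by the bijectivity of $C(\alpha)$ and then try to \emph{compute} the weak-$*$ limit of $\sqrt{1+|\nabla^M\phi_k|^2}$ via Riemann--Lebesgue in local charts. The paper instead uses a single \emph{global} normal perturbation $x\mapsto x+\tfrac{1}{k}\varphi_k(x)\sin\bigl(t_k(x\cdot v)\bigr)\nu(x)$ along a fixed direction $v\in\R^n$, and \emph{selects} the frequency $t_k$ by the intermediate value theorem so that the area integral equals $r\H^{n-1}(M)$ exactly, proving only afterwards that $t_k\lesssim k$ so the slope stays bounded. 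That choice avoids charts, partitions of unity, and — crucially — any need to identify the weak-$*$ limit of a nonlinear functional of the gradient.

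There is a gap in your argument, and it is located exactly where you flag the ``main obstacle.'' (A minor preliminary: with amplitude $\alpha/k$ and phase $k^2\xi_j^{(1)}$, differentiating gives $|\nabla^M\phi_k|\sim\alpha k$, not $\alpha$; you surely mean frequency $k$, otherwise the area diverges.) The real issue is that your treatment of the cross-terms confuses the weak-$*$ limit of $|\nabla^M\phi_k|^2$ with that of $\sqrt{1+|\nabla^M\phi_k|^2}$. Showing that the off-diagonal products $\chi_i\chi_j\cos(k\xi_i^{(1)})\cos(k\xi_j^{(1)})$ oscillate to zero, and that the diagonal terms weak-$*$ converge to $\tfrac{\alpha^2}{2}\sum_j\chi_j^2$ (note: not $\alpha^2/2$, since $\sum_j\chi_j^2<1$ on overlaps), does \emph{not} let you ``take the square root'' of the limit: $t\mapsto\sqrt{1+t}$ is strictly concave, so the weak-$*$ limit of the square root is not the square root of the weak-$*$ limit. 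On an overlap where the two phases jointly equidistribute, the actual limit is the double average $\frac{1}{(2\pi)^2}\iint_{[0,2\pi]^2}\bigl(1+\alpha^2\bigl|\chi_i\cos s\,\hat u_i+\chi_j\cos t\,\hat u_j\bigr|^2\bigr)^{1/2}\,\mathrm{d}s\,\mathrm{d}t$, which is in general different from $C(\alpha)=r$. So ``fix 1'' fails as stated. ``Fix 2'' (shrinking the overlap strips) could work, but then the partition of unity must depend on $k$, the strip widths must be tuned against the amplitude $\alpha/k$ to keep $|\nabla^M\phi_k|$ bounded there, and one needs a diagonal extraction; none of this is carried out. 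The paper's IVT device removes all of this bookkeeping, since one never needs to know what the limit is, only that the area functional is continuous in the frequency, starts at $\H^{n-1}(M)\leq r\H^{n-1}(M)$, and tends to $+\infty$.
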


\begin{proof}
If $r=1$, it suffices to set $N_k = M$. Assume $r > 1$.
For $k\in\N^*$, let $C_k\subset M$ be a compact set such that $M\setminus C_k\subset (\partial M)_{1/k}$ and let
$\varphi_k\in C^\infty_c(M)$ be such that
\begin{equation}\label{eq:varphi}
0\leq\varphi_k\leq1\,,\quad\quad \varphi_k\equiv1 \text{ on } C_k\,,\quad\quad
|\nabla^M\varphi_k|\leq Ck\,,
\end{equation}
for some constant $C>0$. In the sequel, $\tau_1(x),\dots,\tau_{n-1}(x)$ will denote an orthonormal base of the tangent space of $M$ at a point $x\in M$.
Fix a point $\bar{x}\in M$ and let $v\in\R^n$ be such that
\begin{equation*}
0< \sum_{i=1}^{n-1}(v\cdot\tau_i(\bar{x}))^2\,,\quad\quad |\bar{x}\cdot v|<\frac{\pi}{2}\,.
\end{equation*}
We claim that it is possible to find a sequence $(t_k)_{k\in\N}$ such that
\begin{equation}\label{eq:rper}
\int_M\sqrt{1+\frac{t^2_k}{k^2}\cos^2(t_k(x\cdot v)) \sum_{i=1}^{n-1}\left(\tau_i(x)\cdot v\right)^2}\,\d\H^{n-1}(x)
    =r\H^{n-1}(M)\,.
\end{equation}
Indeed, by continuity it is possible to find $\lambda,\varepsilon>0$ such that
\begin{equation}\label{eq:estg}
\H^{n-1}(G)=\lambda
\end{equation}
where
\begin{equation}\label{eq:definitiong}
G:=\left\{\, x\in M \,:\, \varepsilon<\sum_{i=1}^{n-1}(v\cdot\tau_i(x))^2\,,\quad
    |x\cdot v| <\frac{\pi}{2}-\varepsilon \,\right\}\,.
\end{equation}
For every $t>0$ define
\[
Z_t:=\left\{\, x\in M \,:\, t|x\cdot v| \,\,\mathrm{mod }\  \pi \in \left(\frac{\pi}{2}-\varepsilon ,\frac{\pi}{2}+\varepsilon \right) \,\right\}\,,
\]
and notice that
\begin{equation}\label{eq:estz}
\liminf_{t\rightarrow\infty}\H^{n-1}(G \setminus Z_t)\geq\frac{\lambda}{2}\,.
\end{equation}
Let $\delta := \cos(\pi/2-\varepsilon) > 0$. By using \eqref{eq:estg}, \eqref{eq:definitiong} and \eqref{eq:estz}, we have that
\begin{align*}
&\liminf_{t\rightarrow\infty}
    \int_M\sqrt{1+\frac{t^2}{k^2}\cos^2(t(x\cdot v)) \sum_{i=1}^{n-1}\left(\tau_i(x)\cdot v\right)^2}\,\d\H^{n-1}(x)\\
&\hspace{0.3cm}\geq\liminf_{t\rightarrow\infty}
    \int_{G\setminus Z_t} \sqrt{1+\frac{t^2}{k^2}\delta^2\varepsilon^2}\,\d\H^{n-1}(x)\\
&\hspace{0.3cm}\geq \liminf_{t\rightarrow\infty}\frac{\lambda}{2}\sqrt{1+\frac{t^2}{k^2}\delta^2\varepsilon^2}=+\infty\,.
\end{align*}
Moreover, it holds that
\begin{equation}\label{eq:esttk}
t_k\leq Ck\,,
\end{equation}
where $C:=\sqrt{4r^2(\H^{n-1}(M))^2-\lambda^2}/(\lambda\delta\varepsilon)$. Let $\nu(x)$ be a unit normal vector to $M$ at $x$, for every $k\geq1$ let
\[
z_k(s):=\frac{1}{k}\sin(t_k s)\,,
\]
and define $w_k:M\rightarrow\R^n$ as
\[
w_k(x):=x+v_k(x)\nu(x)\,,
\]
where $v_k(x):=z_k(x\cdot v)\varphi_k(x)$.
Set $N_k:=w_k(M)$. Using the area formula (see \ref{areaformula}) we get
\[
\H^{n-1}(N_k)=\int_M J^M w_k\,\d\H^{n-1}
    =\int_M \sqrt{\mathrm{det}\left(\, \left[\nabla^M w_k\right]^T\cdot \nabla^M w_k \,\right)}\,\d\H^{n-1}\,.
\]
Since the above determinant is invariant under rotations, for every fixed $x\in M$ we can compute $\nabla^M w_k$ with respect to the orthonormal base of $\R^n$ given by $\tau_1(x),\dots,\tau_{n-1}(x)$,$\nu(x)$. It holds that
\begin{equation}\label{eq:nablaMwk}
\nabla^M w_k=\ov{\mathrm{Id}} + \nu\otimes \nabla^M(\varphi_k v_k) + (\varphi_k v_k) \nabla^M\nu
\end{equation}
where $\ov{\mathrm{Id}}$ denotes the $n\times(n-1)$ matrix defined as $(\ov{\mathrm{Id}})_{ij}:=\delta_{ij}$ for $i=1,\dots,n$ and $j=1,\dots,n-1$.
In particular, $\ov{\mathrm{Id}}^T$ is the projection over the first $n-1$ coordinates. Thus
\begin{equation}\label{eq:canc1}
\ov{\mathrm{Id}}^T\cdot (\nu\otimes \nabla^M(\varphi_k v_k))
    =(\nabla^M(\varphi_k v_k)\otimes \nu)^T\cdot\ov{\mathrm{Id}}=0.
\end{equation}
Moreover, since $\nabla^M\nu[\nu]=0$, we get
\begin{equation}\label{eq:canc2}
(\nabla^M(\varphi_k v_k)\otimes \nu)^T\cdot\nabla^M\nu
    =(\nabla^M\nu)^T\cdot(\nu\otimes(\nabla^M(\varphi_k v_k)))=0.
\end{equation}
Thus, from \eqref{eq:nablaMwk}, \eqref{eq:canc1} and \eqref{eq:canc2} and the identity $(a\otimes b)^T\cdot(a\otimes b)=|a|^2 b\otimes b$, we get
\begin{align*}
\left[\nabla^M w_k\right]^T\cdot  \nabla^M w_k &=\mathrm{Id}_{n-1}+\nabla^M(\varphi_k v_k)\otimes\nabla^M(\varphi_k v_k) +\varphi_k v_k \bigg[\, \ov{\mathrm{Id}}^T\cdot\nabla^M\nu\\
&\hspace{2.8cm}
    + (\nabla^M\nu)^T\cdot \ov{\mathrm{Id}}
    +\varphi_k v_k  (\nabla^M\nu)^T\cdot (\nabla^M\nu)\,\bigg].
\end{align*}
Using \eqref{eq:varphi} and \eqref{eq:esttk} it is possible to write
\[
\left[\nabla^M w_k\right]^T\cdot \nabla^M w_k = \mathrm{Id}_{n-1}+\nabla^M(\varphi_k v_k)\otimes\nabla^M(\varphi_k v_k)
    +(\varphi_kv_k)A_k\,,
\]
where the $A_k$'s are uniformly bounded. Using $\mathrm{det}(\mathrm{Id}+a\otimes a)=1+|a|^2$, we can write
\[
\mathrm{det}\left[\,\mathrm{Id}_{n-1}+\nabla^M(\varphi_k v_k)\otimes\nabla^M(\varphi_k v_k) \,\right]
    =1+|\nabla^M(\varphi_k v_k)|^2\,.
\]
Then
\begin{align}\label{eq:easydet}
\left|\, \int_M\sqrt{\mathrm{det}\left(\, \left[\nabla^M w_k\right]^*\cdot \nabla^M w_k \,\right)}\,\d\H^{n-1}
    -\int_M\sqrt{1+|\nabla^M(\varphi_k v_k)|^2}
        \d\H^{n-1}\,\right| \rightarrow 0 
\end{align}
since $A_k$ is uniformly bounded and $|\varphi_k v_k|\rightarrow0$ (by the uniform continuity of the determinant and a Taylor expansion).
Moreover, the fact that $\varphi^2_k|\nabla^M v_k|^2$ and $|v_k|^2|\nabla^M\varphi_k|^2$ are uniformly bounded, allows us to estimate
\begin{align}\label{eq:onlyck}
&\int_{M\setminus C_k} \sqrt{1+|\nabla^M(\varphi_k v_k)|^2}\,\d\H^{n-1} \nonumber \\
&\hspace{2cm}\leq \int_{M\setminus C_k} \sqrt{2|\nabla^M\varphi_k\cdot\nabla^M v_k|}\,\d\H^{n-1}\nonumber\\
&\hspace{3cm}+\int_{M\setminus C_k} \sqrt{1+\varphi^2_k|\nabla^M v_k|^2+|v_k|^2|\nabla^M\varphi_k|^2}\,\d\H^{n-1} \nonumber\\
&\hspace{2cm} \leq C\H^{n-1}(M\setminus C_k) + C\int_{M\setminus C_k} \sqrt{|\nabla^M\varphi_k|} \,\d\H^{n-1}\nonumber\\
&\hspace{2cm} \leq C(1+\sqrt{k})\H^{n-1}(M\setminus C_k) = \frac{C(1+\sqrt{k})}{k}\rightarrow0\,,
\end{align}
as $k\rightarrow\infty$. Thus, the combination of \eqref{eq:easydet} and \eqref{eq:onlyck} yields
\begin{equation*}
\left|\, \int_M\sqrt{\mathrm{det}\left(\, \left[\nabla^M w_k\right]^*\cdot \nabla^M w_k \,\right)}\,\d\H^{n-1}
    -\int_{C_k}\sqrt{1+|\nabla^M(\varphi_k v_k)|^2}\,\d\H^{n-1}\right|\rightarrow0\,,
\end{equation*}
as $k\rightarrow\infty$. Now, notice that for points in $C_k$ it holds
\[
1+|\nabla^M(\varphi_k v_k)|^2 = 1+|\nabla^M v_k|^2
    =1+\frac{t_k^2}{k^2}\cos^2(t_k(x\cdot v)) \sum_{i=1}^{n-1}\left(\tau_i(x)\cdot v\right)^2
\]
and thus by \eqref{eq:rper} we have that
\begin{equation}\label{eq:identityper}
\int_{C_k}\sqrt{1+|\nabla^M v_k |^2}\,\d\H^{n-1}=r\H^{n-1}(M)\,.
\end{equation}
Hence, by \eqref{eq:conv} and \eqref{eq:identityper}, we conclude that $\H^{n-1}(N_k)\rightarrow r\H^{n-1}(M)$, as $k\rightarrow\infty$. Finally, since $\varphi$ is compactly supported in $M$, $\pa M = \pa N_k$ for all $k \in \N^*$.
\end{proof}

We now combine the above results to obtain recovery sequences for absolutely continuous couples (see Remark \ref{rem:ac}).

\begin{proposition}\label{propo ac}
Let $(E,u)\in \mathfrak S$ be an absolutely continuous couple. Then, for every $\varepsilon>0$ there exists an absolutely continuous couple $(F,v)\in\mathfrak S$ such that
\[
\mathrm{d}_{\mathfrak S}[(F,v), (E,u)]<\varepsilon\,,\quad\quad\quad
|\F(F,v)-\ov{\F}(E,u)|<\varepsilon\,,
\]
and
\[
\left|\, \int_{\pared F} v\,\H^{n-1}-\int_{\pared E} u\,\H^{n-1}   \,\right|<\varepsilon\,.
\]

\end{proposition}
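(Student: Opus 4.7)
The plan is to first invoke Proposition \ref{prop:density} to reduce to a smooth, bounded set with piecewise-constant adatom density, and then to apply the wriggling of Lemma \ref{lem:wrigg} simultaneously on each piece where the constant value exceeds the threshold $s_0 := \sup\{s\ :\ \psi(s) - s\psi'(s) > 0\}$ in order to locally \emph{trade perimeter for adatom density} in a way that preserves, in the limit, both the $\ov\F$-energy and the total mass integral.

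First, by Proposition \ref{prop:density} I would choose an absolutely continuous couple $(E',u')$ with $E'$ smooth and bounded, and $u'$ constant equal to $c_i$ on pieces $(M_i)_{i \in \N}$ satisfying $\H^{n-1}(\pa E'\setminus\bigcup_i M_i)=0$, placing all three quantities of the conclusion within $\varepsilon/2$ of those for $(E,u)$. This reduces the problem to showing that any such piecewise-constant couple admits an absolutely continuous $\varepsilon/2$-approximation $(F,v)$ whose $\F$-energy is close to $\ov\F(E',u')$ and whose total mass matches $\int u' \d\H^{n-1}$.

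The key analytic input is the identity $\ov\psi(c) = \Theta c = r\psi(s_0)$ for $c = rs_0 \geq s_0$, obtained from $\ov\psi$ being linear on $[s_0,\infty)$ with slope $\psi'(s_0) = \Theta$ and from the parabolicity equality $\psi(s_0) = s_0\Theta$ at the endpoint (see Remark \ref{recipe}). Accordingly, set $I_+ := \{i\in\N : c_i > s_0\}$ and $r_i := c_i/s_0 > 1$ for $i\in I_+$. I would then apply Lemma \ref{lem:wrigg} simultaneously to each $M_i$ ($i\in I_+$) with ratio $r_i$ and parameter $m$, getting smooth $N_i^m \subset (M_i)_{1/m}$ with $\pa N_i^m = \pa M_i$ and $\H^{n-1}(N_i^m) \to r_i\H^{n-1}(M_i)$ as $m\to\infty$; note that $\sum_{i\in I_+} r_i \H^{n-1}(M_i) \leq s_0^{-1}\int u'\d\H^{n-1} < \infty$, so the cumulative wriggled perimeter stays finite even if $I_+$ is infinite. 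Since the wriggling is compactly supported in each $M_i$, replacing $M_i$ by $N_i^m$ for $i\in I_+$ (and keeping the other pieces) produces the boundary of a set of finite perimeter $F_m$ with $F_m\triangle E' \subset \bigcup_{i\in I_+}(M_i)_{1/m}$, whose total volume tends to zero as $m\to\infty$. Define $v_m := s_0$ on each $N_i^m$ and $v_m := c_i$ on the unmodified $M_i$.

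In the limit $m\to\infty$, the piecewise identities $\psi(s_0)\H^{n-1}(N_i^m) \to \ov\psi(c_i)\H^{n-1}(M_i)$ and $s_0\H^{n-1}(N_i^m) \to c_i\H^{n-1}(M_i)$ guarantee, after summing over $i$ and invoking dominated convergence with dominating series $r_i\H^{n-1}(M_i)$, that $\F(F_m,v_m) \to \ov\F(E',u')$ and that $\int v_m \d\H^{n-1} \to \int u' \d\H^{n-1}$. The $L^1$ convergence $F_m\to E'$ follows from the tubular inclusion above, while the weak-$*$ convergence of $v_m\H^{n-1}\restr\pa F_m$ to $u'\H^{n-1}\restr\pa E'$ is obtained by testing against $\varphi \in \mathcal C_c$ piece by piece, exploiting uniform continuity of $\varphi$ on each $(M_i)_{1/m}$ combined with the mass identity just verified. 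The main obstacle I anticipate is the assembly in the third paragraph: one must verify that the wriggled pieces genuinely glue into the boundary of a set of finite perimeter for all $m$ large, which hinges on the compact support of the wriggling inside each $M_i$ (provided by the cut-off $\varphi_k$ in the proof of Lemma \ref{lem:wrigg}) and the preservation of orientation with respect to the exterior normal of $E'$ along the fixed common Lipschitz boundaries.
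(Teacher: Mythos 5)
Your overall strategy matches the paper's: reduce via Proposition \ref{prop:density} to a piecewise-constant density on a smooth bounded set, set $r_i := c_i/s_0$ on the pieces where $c_i > s_0$, and wriggle those pieces by Lemma \ref{lem:wrigg} while lowering the density to $s_0$, exploiting $\ov{\psi}(r s_0) = \Theta r s_0 = r\psi(s_0)$. This is exactly the paper's choice $r_i^k := \max\{1,\, u_i^k/s_0\}$, $v_i^k := \min\{s_0,\, u_i^k\}$.

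The gap is in the ``dominated convergence with dominating series $r_i\H^{n-1}(M_i)$'' step. Lemma \ref{lem:wrigg} gives, for each $M_i$ separately, a sequence $N_i^m$ with $\H^{n-1}(N_i^m) \to r_i\H^{n-1}(M_i)$ as $m \to \infty$; it provides no domination of the form $\H^{n-1}(N_i^m) \leq C\, r_i\H^{n-1}(M_i)$ with $C$ uniform in $i$ and $m$, and the rate of convergence may well degrade as $i$ grows (the correction terms in its proof depend on the second fundamental form of $M_i$, and the construction is run for each piece independently). Applying the lemma with the \emph{same} parameter $m$ to every $i\in I_+$ and then summing therefore cannot be justified by dominated convergence: for a fixed $m$, the tail $\sum_{i>I}\H^{n-1}(N_i^m)$ is uncontrolled. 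The paper closes this gap by a diagonal extraction that imposes a summable error bound: for each $i$ it selects the member of the wriggling sequence satisfying $\left|\H^{n-1}(N_i^k)-r_i^k\H^{n-1}(M_i^k)\right| \leq 2^{-i}/k$ together with $N_i^k \subset (M_i^k)_{\varepsilon_i^k}$, $\varepsilon_i^k := \min\left(\delta_k,\, 2^{-i}/k\right)$. These explicit geometric weights $2^{-i}$ are what allow the interchange of sum and limit in all four estimates (energy, total mass, weak* convergence, and $L^1$-closeness of the sets); they replace, rather than follow from, dominated convergence. Apart from this missing summable control, your construction (gluing along the fixed Lipschitz boundaries $\pa M_i$, staying inside a normal tubular neighborhood of size $\delta_k$ to avoid self-intersections, orientation consistency) is correct.
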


\begin{proof}
In the case $\psi=\ov{\psi}$, there is nothing to prove. Therefore, assume that there exists $s_0>0$ such that
$\psi\equiv\ov{\psi}$ in $[0,s_0]$ and $\ov{\psi}<\psi$ in $(s_0,\infty)$ (see Remark \ref{recipe}).
Let $(E_k,u_k)\in \mathfrak S$ and $ M^k_i \subset\partial E_k$ be the sequences given by Proposition \ref{prop:density} relative to $(E,u)$. Notice that, by looking at the way the $M^k_i$ are obtained, we can assume that each one of them is contained in a cube of diagonal $1/2k$ and of center $x_i^k$. Write
\[
u_k(x) =: \sum_{i=1}^\infty u_i^k \   \ca_{M_i^k}(x)\,.
\]
Using \eqref{eq:vicu}, and the extraction of a (not relabeled) subsequences, we can assume that
\begin{equation}\label{eq:vicuk}
\|u_k\|_{L^1(\partial E_k)}\leq \|u\|_{L^1(\partial^* E)}+\frac{1}{k}\,.
\end{equation}
Fix $k\in\N$ large enough and let
\begin{equation}\label{eq:erre}
r^k_i:=\max \left\{ 1,\ \frac{u_i^k}{s_0} \right\}\,.
\end{equation}
Let $\delta_k>0$ be such that $(\pa E_k)_{\delta_k}$ is a normal tubular neighborhood of the whole $\pa E_k$ to avoid self-intersection when wriggling.
By Lemma \ref{lem:wrigg} for every $i \in \N$ it is possible to find a sequence of smooth manifolds $(N_i^k)_{k\in \N}$ with Lipschitz boundary such that
\begin{equation}\label{eq:closeness}
N^k_i\subset (M^k_i)_{\varepsilon_i^k}\,, \qquad \ \
\left|\, \H^{n-1}(N^k_i)-r^k_i\H^{n-1}(M^k_i) \right|\leq\frac{2^{-i}}{k}\,,
\end{equation}
where $\varepsilon_i^k := \min( \delta_k, \frac{2^{-i}}{k})$.
Define
\begin{equation}\label{eq:vki}
v_i^k = \min\left\{ s_0, u_i^k \right\}\,.
\end{equation}
Observe that when $r_i^k=1$ then $N_i^k = M_i^k$ and $v_i^k = u_i^k$, \emph{i.e.}, we do not modify anything.

Now, let $F_k$ be the bounded set whose boundary is $\pa F_k := \bigcup_{i\in\N} \ov{N}^k_i$, and let
$v_k\in L^1(\partial F_k,\R_+)$ be defined as $v_k:=v^k_i$ on $N^k_i$.
Notice that $F_k$ is well defined, since the $N^k_i$ are disjoint, smooth and $\partial N^k_i=\partial M^k_i$ by construction. Then,
\[
\left\|\ca_{E_k}-\ca_{F_k}\right\|_{L^1}\leq\frac{1}{k}\,.
\]
Let $\varphi\in \mathcal C_c(\R^n)$. By uniform continuity of $\varphi$, fixed $\eta>0$ it is possible to find
$\bar{k}\in\N$ such that $|\varphi(x)-\varphi(y)|<\eta$ for every $x,y\in\R^n$ with $|x-y|<1/\bar{k}$. Increasing $k$ if necessary, we can assume that $1/k < 1/\ov{k}$. Then 
\begin{align*}
&\left|\, \int_{\partial F_k} \varphi v_k\,\d\H^{n-1} -\int_{\partial^* E} \varphi u\,\d\H^{n-1} \,\right| \\
&\hspace{2cm}=\left|\, \sum_{i\in\N} \int_{N^k_i} \varphi v_k\,\d\H^{n-1} - \int_{\partial^* E} \varphi u\,\d\H^{n-1} \,\right|\\
&\hspace{2cm}\leq \sum_{i\in \N} \left|\, \int_{N^k_i} \varphi v_k\,\d\H^{n-1} -
    \int_{M^k_i} \varphi u_k\,\d\H^{n-1} \,\right| \\
&\hspace{3cm}+\left|\, \int_{\partial E_k} \varphi u_k\,\d\H^{n-1}-\int_{\partial^* E} \varphi u\,\d\H^{n-1} \,\right|\\
&\hspace{2cm}\leq \eta \left(\, \|u_k\|_{L^1(\partial E_k)} + \|v_k\|_{L^1(\partial F_k)}  \,\right) \\
&\hspace{3cm}+\sup|\varphi|\, \sum_{i\in \N} \left|\,\H^{n-1}(N^k_i)v^k_i - \H^{n-1}(M^k_i)u^k_i \,\right|\\
&\hspace{4cm}+\left|\, \int_{\partial E_k} \varphi u_k\,\d\H^{n-1}-\int_{\partial^* E} \varphi u\,\d\H^{n-1} \,\right|.
\end{align*}    
In this last step we used the uniform continuity of $\varphi$, the facts that $M_i^k$ and $N_i^k$ are contained in cubes of diagonal $1/(2k)$ and $1/k$, respectively, and that $1/k < 1/\ov{k}$. Observe that the summands in the last term are zero if $r_i^k=1$, so denote $J \subset \mathbb{N}$ the set of indexes $i$ for which $r_i^k > 1$. We thus have
\begin{align*}    
&\left|\, \int_{\partial F_k} \varphi v_k\,\d\H^{n-1} -
    \int_{\partial^* E} \varphi u\,\d\H^{n-1} \,\right| \\
&\hspace{2cm}\leq\eta \left(\, \|u_k\|_{L^1(\partial E_k)} + \|v_k\|_{L^1(\partial F_k)}  \,\right)\\
&\hspace{3cm}+s_0 \sup|\varphi|\, \sum_{i\in J} \left|\,\H^{n-1}(N^k_i) - r_i^k \H^{n-1}(M^k_i) \,\right|\\
&\hspace{4cm}+\left|\, \int_{\partial E_k} \varphi 
    u_k\,\d\H^{n-1}-\int_{\partial^* E} \varphi u\,\d\H^{n-1} \,\right|\\
&\hspace{2cm}\leq 2\eta\left(\, \|u\|_{L^1(\partial^* E)} + \frac{1}{k} \,\right)
    +\frac{s_0 \sup|\varphi|}{k} \\
&\hspace{3cm}+\left|\, \int_{\partial E_k} \varphi 
    u_k\,\d\H^{n-1}-\int_{\partial^* E} \varphi u\,\d\H^{n-1} \,\right|
\end{align*}
where in the last step we used \eqref{eq:vicuk}, \eqref{eq:erre}, \eqref{eq:closeness} and \eqref{eq:vki}.
Now, by recalling that
\[
u_k\H^{n-1}\restr\partial E_k\wt u\H^{n-1}\restr\partial^* E\,,
\]
and using the arbitrariness of $\eta$, we conclude that the above quantities go to zero as $k\rightarrow\infty$. In particular  $\mu_k\wt\mu$,
where $\mu_k:=v_k\H^{n-1}\restr\partial F_k$ and $\mu:=u\H^{n-1}\restr\partial E$.
Moreover, $\mu_k(\R^n)\rightarrow\mu(\R^n)$.
Finally, observe that 
\begin{align*}
\left|\F(F_k,v_k) - \ov{\F}(E,u) \right|  \leq  & \left| \int_{\pa F_k} \psi(v_k) \d \H^{n-1} - \int_{\pa E_k} \ov{\psi}(u_k) \d \H^{n-1} \right| \\
&\hspace{1cm}+ \left|\int_{\pa E_k} \ov{\psi}(u_k) \d \H^{n-1} - \int_{\pared E} \ov{\psi}(u) \d \H^{n-1} \right|
\end{align*}
goes to zero as $k\to \infty$ thanks to similar computations of the ones above and (iv) of Proposition \ref{prop:density}. This concludes the proof.
\end{proof}

We now prove the approximation in energy of a measure $\mu$ that is singular with respect to $|D\ca_E|$.

\begin{proposition} \label{propo singular}
Let $\mu\in \mathcal{M}^+_{loc}(\R^n)$ be such that $\mu(\R^n) < \infty$. Then for every $\e>0$ there exists an absolutely continuous couple $(E,u)$ such that 
	\begin{align*}
	d_{\mathfrak S}[(E,u) , (\varnothing, \mu) ]<\e\,,\quad\quad\quad \left| \F(E,u) -  \Theta \mu(\R^n) \right|< \e\,,
	\end{align*}
and
\[
\left|\, \int_{\pared E} u\,\H^{n-1}-\mu(\R^n)\,\right|<\varepsilon\,.
\]
\end{proposition}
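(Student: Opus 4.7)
The plan is a two-level approximation: first replace $\mu$ by a finite sum of Dirac masses, then replace each Dirac by a small ``wriggled sphere'' carrying a constant density near the value $s_*$ at which $\psi(s)/s$ attains (or nearly attains) its infimum $\Theta$. The key algebraic input is the identity $\Theta = \inf_{s>0}\psi(s)/s$: indeed $\psi \geq \ov\psi$ everywhere, $\ov\psi(s)/s$ is non-increasing with limit $\Theta$, and when $s_0 := \sup\{s : \psi(s) - s\psi'(s) > 0\}$ is finite one has $\psi(s_0)/s_0 = \ov\psi(s_0)/s_0 = \Theta$, while $s_0 = \infty$ forces $\psi = \ov\psi$. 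Hence for every $\eta > 0$ one can pick $s_* \in (0,\infty)$ with $\psi(s_*)/s_* \leq \Theta + \eta$.

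For the Dirac reduction I would use outer regularity and finiteness of $\mu$: fix a large ball $B_R$ with $\mu(\R^n \setminus B_R) < \eta$, subdivide $B_R$ into finitely many pairwise disjoint Borel sets $A_1,\dots,A_N$ of diameter less than $\delta$ with $\mu(\partial A_i) = 0$ (e.g.\ a grid of cubes whose faces are generically positioned), and pick arbitrary $x_i \in A_i$, $m_i := \mu(A_i)$. The discrete measure $\mu' := \sum_i m_i \delta_{x_i}$ then satisfies $d_{\mathcal M}(\mu',\mu)$ and $|\mu'(\R^n) - \mu(\R^n)|$ both arbitrarily small, by making $\delta$ and $\eta$ small.

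Next I would approximate each atom $m_i\delta_{x_i}$ by a smooth, absolutely continuous couple concentrated near $x_i$. Take a ball $B_{r_i}(x_i)$ with $r_i$ so small that the prescribed wriggling factor
\[
\rho_i := \frac{m_i}{s_*\, n\omega_n\, r_i^{n-1}}
\]
exceeds $1$ and the $B_{2r_i}(x_i)$ are pairwise disjoint. Apply Lemma \ref{lem:wrigg} to $M := \partial B_{r_i}(x_i)$ with this factor to produce a smooth closed hypersurface $N_i \subset (M)_{1/k}$ with $|\H^{n-1}(N_i) - m_i/s_*|$ as small as desired; because $\partial M = \varnothing$ the wriggled $N_i$ is also closed and bounds a smooth region $F_i$ in $\R^n$. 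Set $v_i := m_i / \H^{n-1}(N_i)$ constant on $N_i$: then $v_i$ is close to $s_*$, $\int_{N_i} v_i\,\d\H^{n-1} = m_i$, and $|F_i| \leq C(r_i + 1/k)^n$ can be made arbitrarily small. Since $N_i$ is supported in an arbitrarily thin neighborhood of $x_i$, the measure $v_i\H^{n-1}\restr N_i$ is close to $m_i\delta_{x_i}$ weakly-$*$, and
\[
\F(F_i, v_i) = \psi(v_i)\,\H^{n-1}(N_i) \;\approx\; \frac{\psi(s_*)}{s_*}\, m_i \;\leq\; (\Theta + \eta)\, m_i.
\]

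Finally, set $E := \bigcup_i F_i$ and $u := v_i$ on $\partial F_i$; this defines an absolutely continuous couple. Summing the per-atom estimates and combining with the Dirac-approximation error, all three required inequalities hold provided $\eta$ is taken small (in terms of $\varepsilon$, $\mu(\R^n)$ and $\Theta$) and the parameters $(r_i, 1/k)$ are chosen by a diagonal selection. The main technical obstacle will be bookkeeping these nested limits consistently while keeping the $F_i$ disjoint and the wriggling factor $\rho_i \geq 1$; this forces the order of parameter choice (first $s_*$, then $\delta$ and the $r_i$, then the wriggling index $k$), but causes no essential difficulty.
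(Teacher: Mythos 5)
Your proof is correct, and it takes a genuinely different route from the paper's. The paper first mollifies $\mu$ to a smooth density $f_\delta\L^n$, then (in Lemma~\ref{technical lemma singular}) discretizes by a dyadic partition and places in each cube a tiny ball $B^k_j$ whose radius $r_k = m_k^{1/(n-1)}2^{-2k}$ is chosen so that the constant adatom density carried by $\pa B^k_j$ \emph{blows up}, exploiting $\ov{\psi}(s)/s\searrow\Theta$ as $s\to\infty$; this only controls $\ov{\F}$, so an extra application of Proposition~\ref{propo ac} is needed to pass to $\F$. You instead discretize $\mu$ directly into finitely many Dirac atoms $m_i\delta_{x_i}$, then recover each atom by a \emph{wriggled} small sphere (Lemma~\ref{lem:wrigg} applied to $\pa B_{r_i}(x_i)$, whose boundary is empty, so the wriggled surface remains closed) carrying a \emph{bounded} density $v_i\approx s_*$ where $\psi(s_*)/s_*\leq\Theta+\eta$. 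Your algebraic lemma $\Theta=\inf_{s>0}\psi(s)/s$ is correct: when $s_0<\infty$ the infimum is attained at $s_0$ by Lemma~\ref{stick}, and when $s_0=\infty$ one has $\psi=\ov{\psi}$. Because the density stays bounded near $s_*$, you obtain the estimate on $\F$ directly, without invoking Proposition~\ref{propo ac} — a real simplification. The cost is that you must manage the wriggling by hand (injectivity of $w_k$ for $1/k$ below the reach of $\pa B_{r_i}$ so that $N_i$ is embedded and bounds a smooth region, pairwise disjointness of the $F_i$, and the $\rho_i\geq 1$ constraint), but as you observe these are enforced by choosing $r_i$ small and $k$ large in the right order; cubes with $m_i=0$ should simply be dropped. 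Both proofs hinge on the same fact — $\Theta$ is the asymptotic/least value of the per-unit-mass cost — but you realize it via the near-minimizing density $s_*$ and extra perimeter, while the paper realizes it via a diverging density and the relaxed integrand.
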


The proof of Proposition \ref{propo singular} is a consequence of the following lemma.

\begin{lemma} \label{technical lemma singular}
Let $f\in \C^{\infty}(\R^n)\cap L^1(\R^n)$ with $f\geq 0$. For every $\e>0$ there exists an absolutely continuous couple $(F,w)$ such that
	\begin{align*}
	d_{\mathfrak S} [(F,w), (\varnothing,f\L^n)] &< \e \quad\quad\quad\left| \ov{\F} (F,w) - \ov{\F}(\varnothing,f\L^n) \right| < \e\,,
	\end{align*}
and
\[
\left|\, \int_{\pared F} w\,\H^{n-1}-\int_{\R^n} f\,\mathrm{d}x  \,\right|<\varepsilon\,.
\]

\end{lemma}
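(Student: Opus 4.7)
The idea is that, since $\overline{\F}(\varnothing, f\mathcal{L}^n) = \Theta \int f\,dx$ and $\overline{\psi}(s)/s \to \Theta$ as $s \to \infty$, we can recover $f\mathcal{L}^n$ by distributing its mass over many small spheres, each carrying a large constant adatom density. A single sphere $\partial B_r$ carrying density $c$ contributes $\overline{\psi}(c) P(B_r) = (\overline{\psi}(c)/c) \cdot cP(B_r)$, and the first factor is close to $\Theta$ for $c$ large while the second is the mass of the associated surface measure. The associated volume $|B_r|$ can be made arbitrarily small, so the sets converge in $L^1$ to $\varnothing$, while the surface measures can be arranged to approximate $f\mathcal{L}^n$ in the weak-* sense by a quadrature-like argument.

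\textbf{Construction.} Given $\varepsilon > 0$, pick $\delta > 0$ such that $\delta \|f\|_{L^1} < \varepsilon$ and, by the definition of $\Theta$, an $S > 0$ such that $|\overline{\psi}(s)/s - \Theta| < \delta$ whenever $s \geq S$. Then fix a small $\ell > 0$ and partition $\R^n$ into closed cubes $(Q_i)_{i\in\mathbb{Z}^n}$ of side $\ell$ with centers $x_i$; set $m_i := \int_{Q_i} f\,dx$ and $I := \{i : m_i > 0\}$. For each $i \in I$ choose a radius
\[
0 < r_i < \min\left\{\frac{\ell}{4},\ \left(\frac{m_i}{n\omega_n S}\right)^{1/(n-1)}\right\},
\]
which forces $B_{r_i}(x_i) \subset Q_i$ and $c_i := m_i/P(B_{r_i}) = m_i/(n\omega_n r_i^{n-1}) > S$. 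Finally let $F := \bigcup_{i \in I} B_{r_i}(x_i)$ (a disjoint union, balls being in distinct cubes) and define $w := c_i$ on $\partial B_{r_i}(x_i)$.

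\textbf{Verification.} Disjointness gives $P(F) = \sum_i P(B_{r_i}) = \sum_i m_i/c_i \leq \|f\|_{L^1}/S < \infty$, so $F \in \mathfrak{C}(\R^n)$ and $(F,w)$ is an absolutely continuous couple. The mass is exactly preserved: $\int_{\pared F} w\, d\H^{n-1} = \sum_i c_i P(B_{r_i}) = \sum_i m_i = \int f\,dx$, yielding the third estimate with zero error. The volume bound $|F| = \sum_i \omega_n r_i^n \leq \ell\|f\|_{L^1}/(4nS)$ gives $\chi_F \to 0$ in $L^1$ as $\ell \to 0$. For the energy, since $c_i > S$ for all $i \in I$:
\[
\left| \overline{\F}(F,w) - \Theta \int f\,dx \right| = \left| \sum_{i \in I} \left( \frac{\overline{\psi}(c_i)}{c_i} - \Theta \right) m_i \right| \leq \delta \|f\|_{L^1} < \varepsilon.
\]
For the weak-* convergence toward $f\mathcal{L}^n$, test against $\varphi \in \mathcal{C}_c(\R^n)$ with modulus of continuity $\omega_\varphi$. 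Splitting $\int_{\partial B_{r_i}} \varphi\, d\mathcal{H}^{n-1} = \varphi(x_i) P(B_{r_i}) + O(\omega_\varphi(r_i) P(B_{r_i}))$ gives
\[
\left| \int \varphi\, d\mu - \sum_i \varphi(x_i) m_i \right| \leq \omega_\varphi(\max_i r_i)\, \|f\|_{L^1},
\]
while $\big|\sum_i \varphi(x_i) m_i - \int \varphi f\,dx\big| \leq \omega_\varphi(\sqrt{n}\,\ell)\|f\|_{L^1}$; both vanish as $\ell, \max_i r_i \to 0$, and so $d_{\mathcal{M}}(\mu, f\mathcal{L}^n) < \varepsilon$ after a final tuning of the parameters.

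\textbf{Main obstacle.} The delicate point is coordinating the two scales: on the one hand $r_i$ must be small enough with respect to $m_i$ to drive $c_i$ above the threshold $S$ (so $\overline{\psi}(c_i)/c_i$ is uniformly close to $\Theta$); on the other hand it must fit inside a cube of side $\ell$, and $\ell$ itself must be small for the weak-* quadrature error to be under control. The inequality displayed above shows that both constraints can be simultaneously satisfied for every cube, including those where $m_i$ is very small. A subsidiary issue is that $f$ need not have compact support, so the index set $I$ may be infinite; however, the total perimeter, total energy and total volume are all controlled by $\|f\|_{L^1}$ (via division by $S$), so no truncation is necessary and $(F,w)$ remains a legitimate admissible couple.
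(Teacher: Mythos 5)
Your proof is correct, and it follows the same fundamental idea as the paper (spread the mass of $f\L^n$ over many tiny spheres, each carrying a large constant adatom density so that $\overline{\psi}(c)/c$ is uniformly close to $\Theta$). The implementation differs in three ways worth pointing out. First, you choose the radius $r_i$ cube-by-cube (so that each $c_i$ directly exceeds the threshold $S$), whereas the paper fixes a single radius $r_k = m_k^{1/(n-1)}2^{-2k}$ for all balls at dyadic level $k$, with $m_k$ the minimum cube mass; your choice removes the need for a positive minimum. Second, and as a consequence, you do not need the paper's preliminary reduction to $f \in \mathcal{C}_c^\infty$: all your estimates (total perimeter, volume, energy error, quadrature error) are controlled by $\|f\|_{L^1}$ alone and degrade gracefully over an infinite index set, while the paper needs compact support to ensure $J_0$ is finite and $m_k > 0$. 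Third, you verify $\mu \wt f\L^n$ directly by a two-term quadrature estimate against each $\varphi \in \mathcal{C}_c$, whereas the paper extracts a weak-* limit $\nu$ by compactness and then identifies $\nu = f\L^n$ by computing its density on balls; your route is more elementary. The only step you leave slightly informal is the passage from the per-$\varphi$ quadrature bound to a bound on the metric $d_{\mathcal{M}}$: this is fine if phrased as ``take $\ell_k \to 0$, get $\mu_{\ell_k} \wt f\L^n$, then metrizability gives $d_{\mathcal{M}}(\mu_{\ell_k}, f\L^n) \to 0$, and choose $k$ large,'' which is clearly what you intend by ``a final tuning of the parameters.''
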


Before proving this lemma, we first show how to derive Proposition \ref{propo singular} from it.
 
\begin{proof}[Proof of Proposition \ref{propo singular}]
Let $\{\eta_{r}\}_{r>0}$ be a mollifying kernel, and define	
\[
f_r(x):=\int_{{\R^n}} \eta_{r}(x-y) \d \mu(y).
\]
By standard arguments we know that $f_r\in C^{\infty}(\R^n)$ and $f_r\L^n \wt \mu$ as $r\rightarrow 0$. In particular, for every $\e>0$ we can find $\de>0$ such that
	\begin{align*}
	d_{\M}(f_{\delta} \L^n, \mu) & < \e/3\,,
	\end{align*}
and
\[
\left|\, \int_{\R^n} f_\delta\,\mathrm{d}x -\mu(\R^ n) \,\right|<\varepsilon/3\,. 
\]
Moreover, since
	\[
	\int_{\R^n} f_r \d x \underset{r\to 0}{\longrightarrow} \mu(\R^n),
	\]
up to further decreasing $\delta$ we can also ensure that
	\[
	\left| \ov{\F} (\varnothing, f_{\delta} \L^n) - \ov{\F}(\varnothing, \mu )\right| = \Theta \left| \ \|f_\de\|_{L^1} - \mu(\R^n)   \right| < \e/3.
	\]
Applying Lemma \ref{technical lemma singular} we find an absolutely continuous couple $(F ,w ) $ such that
	\begin{align*}
	d_{\mathfrak S}[ (F,w) , (\varnothing,f_{\delta} \L^n) ] < \e/3, \qquad \quad \left| \ov{\F}(F,w) -  \ov{\F} (\varnothing, f_{\delta} \L^n) \right| < \e/3\,,
	\end{align*}
and
\[
\left|\, \int_{\pared F} w\,\H^{n-1}-\int_{\R^n} f_\delta\,\mathrm{d}x  \,\right|<\varepsilon/3\,.
\]
Applying Proposition \ref{propo ac} let $(E,u) $ be an absolutely continuous couple such that 
	\begin{align*}
	d_{\mathfrak S}[ (E,u),(F,w) ] < \e/3, \qquad \quad \left| \F(E,u) -  \ov{\F} (F,w) \right| < \e/3.
	\end{align*}
Using the triangle inequality, we conclude that
		\begin{align*}
	d_{\mathfrak S}[ (E,u),(\varnothing, \mu) ] <\e, \qquad \quad 	\left| \F(E,u) -  \ov{\F} (\varnothing, \mu ) \right| < \e\,,
	\end{align*}
as well as
\[
\left|\, \int_{\pared E} u\,\H^{n-1}-\mu(\R^n)\,\right|<\varepsilon\,.
\]
\end{proof}

\begin{proof}[Proof of Lemma \ref{technical lemma singular}]
Let us first notice that, without loss of generality, it is possible to assume that $f\in C^\infty_c(\R^n)\cap L^1(\R^n)$.
Indeed, it is possible to find $R>0$ such that
\[
\int_{\R^n\setminus B_R(0)} |f(x)| \d x<\e/2.
\]
Taking a mollifying kernel $\{\eta_r\}_{r>0}$ it is possible to find $r>0$ such that the function
\[
\widetilde{f}_{r}(x):=\int_{B_R(0)} \eta_r(x-y) f(y) \d y
\]
satisfies $\|f-\widetilde{f}_r\|_{L^1(\R^n)}<\e$. In particular we will assume $f\in \C_c^{\infty}(\R^n)$ (by performing the slight abuse of notation $f=\widetilde{f}_r$).\\

Let $\{Q_j^{k}\}_{j\in \N}$ be a diadic partition of $\R^n$  in cubes of size $|Q_j^{k}|= 2^{-nk}$ and centers $x_j^k$. We introduce the set of indexes
	\[
	 J_0 =\{ j \in \{ 1,\ldots, 2^{nk} \}  :  \ |Q_j^k\cap \ov{\{f>0\}}|\neq 0 \}\,,
	\]
and we set
	\[
	0<m_k:= \min \left\{  \int_{Q_j^k} f \d x \ : \  j\in J_0 \right\}< \sup_{\R^n}\{f\} 2^{-nk}.
	\]
Since $\text{supp}(f)$ is compact, we can infer that
	\begin{equation}\label{controlpower}
	\#(J_0)|Q_j^k|<C
	\end{equation}
where here, and in what follows, $C$ will always stand for a constant depending on $f$ and $n$ only and whose value can change from line to line.  Let 
	\[
	r_k:=  m_k^{1/(n-1)}2^{-2k},  \ \ \ \ \ B_j^k:= B_{r_k}(x_j^k)\cc Q_j^k\,,
	\]
and define (see Figure \ref{pic: sing})
	\begin{equation}\label{eqn: sequence of mickey mouse}
			F_k:= \bigcup_{j\in J_0 }  B_j^k, \ \ \ \ \ 	w_k(x):=\sum_{j\in J_0} \frac{ \ca_{\pa B_j^k}(x) }{P(B_j^k)} \int_{Q_j^k} f(y) \d y.
	\end{equation}
	
\begin{figure}[h!]
\centering
\includegraphics[scale=0.6]{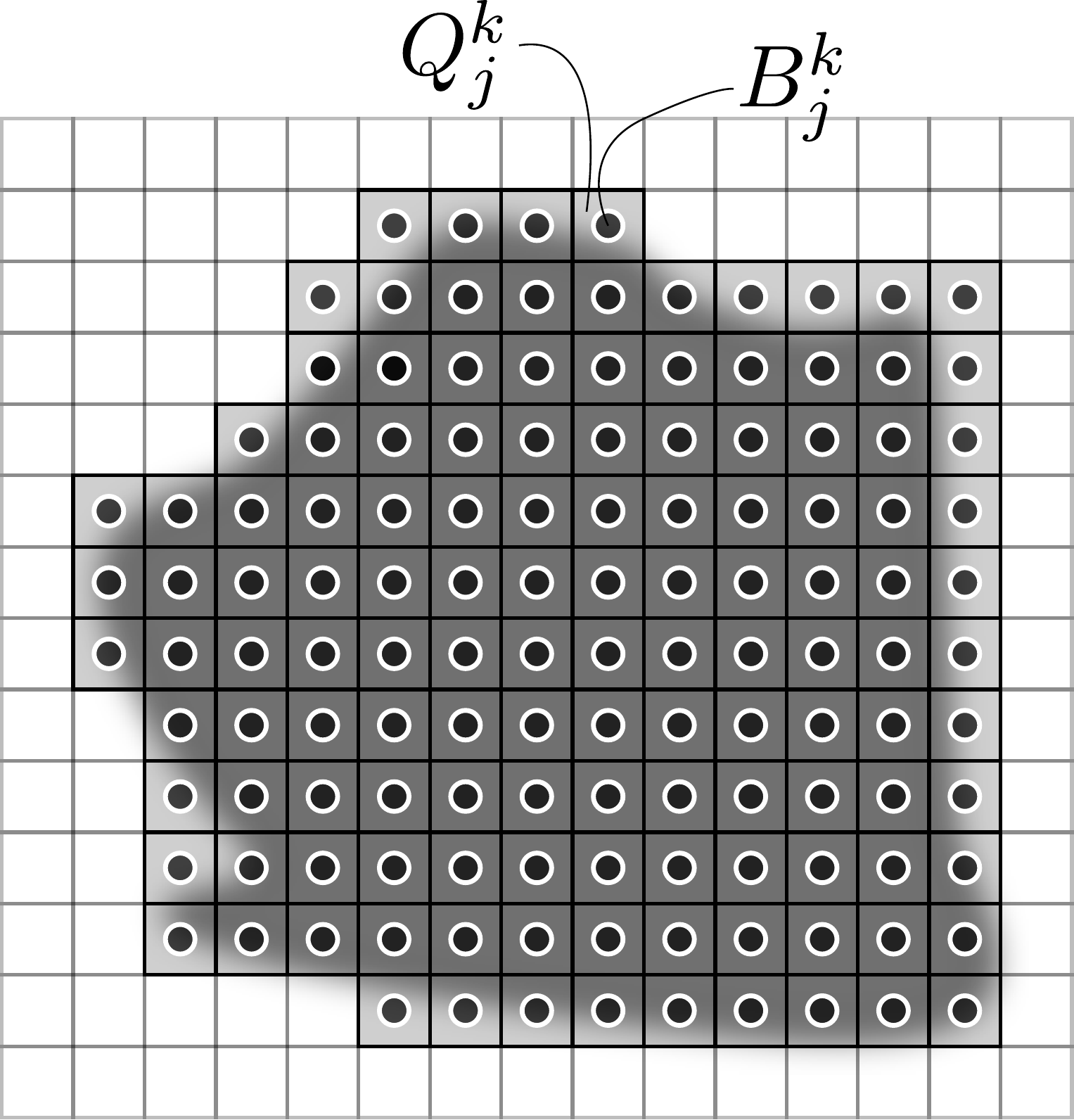}\caption{In the background the set $\text{supp} (f)$. On the top the diadic division and the set $F_k$ built as the union of small balls (in black). The adatom density $w_k$ is defined to be constant on each $\pa B_j^k$ (evidenced in white circles).}\label{pic: sing}
\end{figure}

Notice that, since $B_j^k\cap B_m^k=\varnothing$ for $j\neq m$, the function $w_k \in L^1(\pared F_k;\R_+)$ is well defined.
We also notice that, by construction, for each $j\in J_0$ it holds
	\begin{equation}\label{pieceofcake}
	\frac{1}{P(B_j^k)} \int_{Q_j^k} f(y)\d y \geq C 2^{2(n-1)k}.
	\end{equation}
Since $\ov{\psi}(x)/x \searrow \Theta$ we have, for each $\e>0$ and for $k$ big enough, that
	\begin{equation}\label{pieceofcake2}
	 \left|P(B_j^k)\ov{\psi}\left( \frac{1}{P(B_j^k)} \int_{Q_j^k} f\d y\right)-\Theta \int_{Q_j^k} f\d y \right|<\e \int_{Q_j^k} f\d y,
	\end{equation}
for all $j\in J_0$. Since
	\begin{align*}
	\ov{\F}(F_k,w_k)&= \sum_{j\in J_0} \int_{\pa B_j^k}\ov{\psi}(w_k) \d \H^{n-1}\\
	&=    \sum_{j\in J_0} P(B_j^k) \ov{\psi}\left( \frac{1}{P(B_j^k)} \int_{Q_j^k} f\d y\right)\\
	&= \sum_{j\in J_0} \left( P(B_j^k) \ov{\psi}\left( \frac{1}{P(B_j^k)} \int_{Q_j^k} f\d y\right)
	    -\Theta \int_{Q_j^k} f\d y\right)\\
	&\hspace{2cm}+\Theta \sum_{j\in J_0}  \int_{Q_j^k} f\d y,
		\end{align*}
invoking \eqref{pieceofcake2} and \eqref{controlpower}, for large $k$, we are led to
	\begin{equation*}
		\left|\ov{\F}(F_k,w_k) - \Theta\int_{\R^n} f\d y\right| \leq \e \sum_{j\in J_0} \int_{Q_j^k} f\d y\leq \e C.
	\end{equation*}
	
We now claim that the sequence $((F_k,w_k))_{k\in\N}$ defined in \eqref{eqn: sequence of mickey mouse} converges to $(\varnothing,f \L^n)$. Using \eqref{controlpower} together with the definition of the $r_k$'s, we get that $|F_k| \rightarrow 0$, and thus $\ca_{F_k} \rightarrow 0$ in $L^1$. Let $\mu_k := w_k \H^{n-1}\restr \pa F_k$ and $\mu := f\L^n$. Noticing that
	\begin{equation*} 
	\mu_k(\R^n)=\mu(\R^n)<+\infty\,,
	\end{equation*}
by Lemma \ref{weak*comp}, up to a (not relabeled) subsequence,
we have that $\mu_k \wt \nu$ for some $\nu\in\mathcal{M}^+_{loc}(\R^n)$. In order to prove that $\nu = f \L^n$, we compute its density. For this, for any ball $B_r $  we introduce the subset of indexes
	\begin{align*}
	\text{in}(B_r;k)&:= \{j \in \{ 1,\ldots, 2^{nk} \} \ : \ Q_j^k\cc B_r)\}, \\
	 \text{bd}(B_r;k)&:=\{ j \in \{ 1,\ldots, 2^{nk} \} \ : \ Q_j^k\cap \pa B_r\neq \varnothing\}.
	\end{align*}

\noindent\textbf{Step one:} \textit{estimate on the cardinality of $\bd(B_r;k)$: $\# (\bd(B_r;k)) $.} Notice that if $Q_j^k\cap \pa B_r \neq \varnothing$ then
	\[
	Q_j^k \subseteq \{x\in \R^n \ : \ d(x,\pa B_r) \leq \sqrt{n} 2^{-k} \} 
	\]
since $\sqrt{n} 2^{-k}$ is the diagonal of each cube. Observe that
	\[
	\left|\{x\in \R^n \ : \ d(x,\pa B_r) \leq \sqrt{n}  2^{-k} \} \right| \leq C(n)P(B_r) 2^{-k},
	\]
and thus we have
	\begin{equation}\label{eqn: crucial estimate}
	\# (\text{bd}(B_r;k))  \leq CP(B_r)  2^{(n-1)k}.
	\end{equation}
\text{}\\
\textbf{Step two:} \emph{$\nu = f\L^n$.} Let $x\in \text{supp}(f)$, $r>0$, $B_r=B_r(x)$, and consider 
	\[
	D_r(k):= \bigcup_{j\in \text{in}(B_r;k)} Q_j^k.
	\]
In view of \eqref{eqn: crucial estimate}, we have
	\begin{align}\label{eqn: pavarotti}
	|B_r \setminus D_r(k)| &\leq  \H^{0} (\text{bd}(B_r;k) )  |Q_j^k| \leq  C P(B_r) 2^{-k}\,. 
	\end{align}
Notice also that 
	\begin{equation}\label{eqn: Napoleone}
	\mu_k(D_r(k))= \sum_{j\in \text{in}(B_r ;k)} \mu_k(Q_j^k) = \sum_{j\in \text{in}(B_r;k)} \mu(Q_j^k) =  \int_{D_r(k)} f \d x.
	\end{equation}
Thus \eqref{eqn: Napoleone} and \eqref{eqn: pavarotti} imply that
	\begin{equation}\label{eqn: part 1}
	\left|\mu_k(D_r(k)) - \int_{B_r}f \d x \right| \underset{k\to \infty}{\longrightarrow} 0.
	\end{equation}
Also, by \eqref{eqn: crucial estimate}, we have
	\begin{align*}
	\left| \mu_{k}(B_r) -\mu_k(D_r(k)) \right| &\leq \sum_{ j\in   \text{bd}(B_r;k) } \int_{Q_j^k} f\d y  \leq C  \#(\text{bd}(B_r;k)) 2^{-nk}\\
	&\leq C  P(B_r)  2^{-k}\underset{k\to \infty}{\longrightarrow} 0.
	\end{align*}
By the triangle inequality and \eqref{eqn: part 1} we obtain
	\begin{equation}\label{eqn: damn it...}
	\left| \mu_k(B_r) - \int_{B_r} f\d x  \right| \leq \left| \mu_k(B_r) -  \mu_k(D_r(k)) \right|+ \left| \mu(D_r(k)) -\int_{B_r} f\d x  \right| \rightarrow 0.
\end{equation}
Clearly, if $x\notin \text{supp}(f)$ we have $\mu_k(B_r(x))=0$ for a small enough $r>0$ and for a large enough $k$, implying that $\nu(B_r(x))=0$. On the other hand, in view of \eqref{eqn: damn it...}, if $x\in \text{supp}(f)$ then for every $r>0$
	\[
	\mu_{k_h}(B_r(x)) \rightarrow \int_{B_r(x)} f \d y\,. 
	\]
Thus, by \eqref{foliation} for all but countably many $r>0$
	\[
	\mu_{k_h}(B_r(x))\rightarrow \nu(B_r(x)).
	\]
This argument shows that
	\begin{equation}		
		\lim_{r\rightarrow 0} \frac{\nu(B_r(x))}{r^n} = \left\{ 
		\begin{array}{ll}
		0 & \text{if $x\notin \text{supp}(f)$},\\
		\om_n f(x) & \text{if $x\in \text{supp}(f)$}\,,
			\end{array}
		\right.
	\end{equation}
and hence $\nu=f\L^n$. Since the limit measure $\nu$ does not depend on the subsequence $\mu_{k_h}$, we conclude that $\mu_k \wt f\L^n$.
\end{proof}

We are finally in position to prove the relaxation result.

\begin{proof}[Proof of Theorem \ref{thm:relax}]
\textbf{Step one:} \textit{liminf inequality.} Let $(E,\mu)\in \mathfrak S$ and let $\left((E_k,\mu_k)\right)_{k\in\N}\subset \mathfrak S$ with
$(E_k,\mu_k)\rightarrow (E,\mu)$.
If there exists $\bar{k}\in\mathbb{N}$ such that $\mu_k$ has a singular part
with respect to $|D\ca_{E_k}|$ for all $k\geq\bar{k}$, then $\F(E_k,\mu_k)=\infty$ for all $k\geq\bar{k}$.
So we can assume, without loss of generality, that, up to a (not relabeled) subsequence, $\mu_k=u_k|D\ca_{E_k}|$, with
$u_k\in L^1(\partial^* E_k,\R_+)$ for all $k\in\N$.
Since $\ov{\psi}\leq\psi$, we have that
\[
\F(E_k,\mu_k)=\int_{\partial^* E_k} \psi(u_k)\,\d\H^{n-1} \geq \int_{\partial^* E_k} \ov{\psi}(u_k)\,\d\H^{n-1}
    =\ov{\F}(E_k,\mu_k)\,.
\]
Using the semi-continuity of $\ov{\F}$ (see Lemma \ref{thm:lsc}), we get that
\[
\liminf_{k\rightarrow\infty}\F(E_k,\mu_k)\geq \liminf_{k\rightarrow\infty}\ov{\F}(E_k,\mu_k)\geq\ov{\F}(E,\mu)\,.\\
\]

\textbf{Step two:} \textit{limsup inequality.} Let $(E,\mu)\in \mathfrak S$ and write $\mu = u|D\ca_E| + \mu^s$, where $\mu^s$ is the singular part of $\mu$ with respect to $|D\ca_E|$. Set $\widetilde{m}:=|E|+\mu(\R^n)$. The cases $\widetilde{m}\in\{0,\infty\}$ are trivial, so we can assume $\widetilde{m}\in(0,\infty)$.
For every $k\in\N^*$, using Propositions \ref{propo ac} and \ref{propo singular}, we can find
$(F_k, v_k)$ and $(G_k,w_k)$ in $\mathfrak S$ such that 
	\begin{align}
	\label{eps31} d_{\mathfrak S}\left[ (E,u), \  (F_k,v_k) \right] &< 1/(4k)\,, \\ 
	\label{eps32} d_{\mathfrak S}[ (G_k,w_k),\  (\varnothing,\mu^s)] &< 1/(4k)\,, \\
	\label{eps21}\left| \int_{\partial^* F_k} \psi(v_k) \d\H^{n-1} 
	    - \int_{\partial^* E} \ov{\psi}(u) \d\H^{n-1} \right| &< 1/(2k)\,,  \\
	\label{eps22}\left| \int_{\partial^* G_k} \psi(w_k) \d\H^{n-1} - \Theta \mu^s(\R^n) \right| &< 1/(2k)\,,\\
	\label{epsm1}\left|\, \int_{\pared F_k} v_k\,\H^{n-1}-\int_{\pared E}u\,\H^{n-1} \,\right|&<1/(2k)\,,\\
	\label{epsm2}\left|\, \int_{\pared G_k} w_k\,\H^{n-1}-\mu^s(\R^n) \,\right|&<1/(2k)\,.
	\end{align}
Define $\widetilde{E}_k:=F_k\triangle G_k$, the symmetric difference of $F_k$ and $G_k$. Up to arbitrarily small isometries of the (finitely many) connected components of $G_k$, it is possible to assume that (see \cite{Mattila2})
$$\H^{n-1}( \partial^* F_k\cap \ov{G_k} )= 0\,,$$
and that \eqref{eps32} still holds. In particular
	\[
	\H^{n-1}(\partial^* \widetilde{E}_k)=\H^{n-1}(\partial^* F_k)+\H^{n-1}(\partial^* G_k).
	\]
Using $|\,|a|-|b|\,|\leq|a-b|$, we obtain
\begin{align}\label{eq:convsets}
\|\ca_E-\ca_{\widetilde{E}_k}\|_{L^1}&=\|\, \ca_E-|\ca_{F_k}-\ca_{G_k}|\,\|_{L^1} \nonumber\\
&\leq \|\ca_E-\ca_{F_k}\|_{L^1}+\|\ca_{G_k}\|_{L^1}\leq 1/(2k)\,.
\end{align}
Now, define $\widetilde{u}_k:\partial^* \widetilde{E}_k\rightarrow\R_+$ as
\[
\widetilde{u}_k(x):=
\left\{
\begin{array}{ll}
v_k(x) & \text{ if } x\in\partial^* F_k\,,\\
w_k(x) & \text{ if } x\in\partial^* G_k\,.
\end{array}
\right.
\]
Using \eqref{eq:convsets}, \eqref{epsm1} and \eqref{epsm2} we get the existence of $(\varepsilon_k)_{k\in\N}$ with $\varepsilon_k\rightarrow1$
such that
\[
|E_k|+\int_{\pared E_k} u_k\,\H^{n-1}=\widetilde{m}\,,
\]
where $E_k:=\varepsilon_k \widetilde{E}_k$ and $u_k:\pared E_k\rightarrow\R_+$ is defined as
$u_k(x):=\widetilde{u}_k(\varepsilon_k^{-1}x)$.
Moreover, up to a (not relabeled) subsequence, we can assume that \eqref{eps31}, \eqref{eps32}, \eqref{eps21} and
\eqref{eps22} still hold true.

Set $\mu_k:=u_k\,\H^{n-1}\restr\partial^* E_k$. Using \eqref{eps31} and \eqref{eps32}, we get that
$$d_{\mathcal M} (\mu,\ \mu_k) < 1/(2k)\,,$$
and with similar computations as in \eqref{eq:convsets}, we get $\|\ca_E-\ca_{E_k}\|_{L^1}\leq 1/(2k)$.
Thus
\begin{equation*}
d_{\mathfrak S}\left[ (E, \mu), \  (E_k, \mu_k) \right] < 1/k\,.
\end{equation*}
Finally, noticing that
\begin{align*}
\mathcal F(E_k,\mu_k) &= \varepsilon_k^n\int_{\partial^* E_k} \psi(u_k) \d\H^{n-1}\\
&=\varepsilon_k^n  \int_{\partial^* F_k} \psi(v_k) \d\H^{n-1}
    +\varepsilon_k^n\int_{\partial^* G_k} \psi(w_k) \d\H^{n-1}\,,
\end{align*}
and using \eqref{eps21}, \eqref{eps22} and $\varepsilon_k\rightarrow1$, we get
\begin{equation*}
\left|  \mathcal F(E_k,\mu_k) -  \overline{\mathcal F}(E, \mu) \right| < 1/k\,.
\end{equation*}
Thus, $\left((E_k,u_k)\right)_{k\in\N}$ is the desired recovery sequence.
\end{proof}

\begin{remark}\label{rem:convconstraint}
Notice that the above proof provides, for any $(E,\mu)\in\mathfrak S$ with $\mu(\R^n)<\infty$, a recovery sequence
$\left((E_k,u_k)\right)_{k\in\N}$ with
\[
|E_k|+\mu_k(\R^n)=|E|+\mu(\R^n)\,.
\]
\end{remark}


  \section{Minimizers and critical points of the relaxed energy}
  
  \label{secminrelax}
  
We now study minimizers and critical points of the relaxed energy $\ov{\F}$ and their relation with those of $\F$.

\begin{theorem}\label{critfbar}
Assume that $\psi$ is strictly convex. Let $(E,\mu) \in \mathfrak S$ be such that $|E|>0$ and its absolutely continuous part $(E,u)$ is a regular critical point for $\ov{\F}$, \emph{i.e.}, $(E,u)$ is as in Definition \ref{def:av} and satisfies
\begin{equation}\label{eq:critfbar}
\int_{\pa E} [\,\ov{\psi}\,'(u) w+\ov{\psi}(u) v H \,] \d \H^{n-1}=0 \quad\, \text{ for all } (v,w)\in \mathrm{Ad}(E,u)\,,
\end{equation}
where $\mathrm{Ad}(E,u)$ is defined in Definition \ref{def:av}.
Then $E$ is a ball $B$ with constant adatom density $c < s_0$ satisfying condition \eqref{cstcritpointsvalue}, namely
\[
(\psi(c)-c\psi'(c))H_{\pa B} = \rho \psi'(c)\,.
\]
\end{theorem}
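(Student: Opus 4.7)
The plan is to mimic the four-step scheme of Proposition \ref{prop:charregcrit}, with $\ov{\psi}$ in place of $\psi$. The key new feature to deal with is that $\ov{\psi}$ is \emph{not} globally strictly convex: combining Remark \ref{recipe} with strict convexity of $\psi$, the map $s\mapsto \psi(s)-s\psi'(s)$ is strictly decreasing, so $\ov{\psi}$ coincides with $\psi$ on $[0,s_0]$ and is linear with slope $\Theta=\psi'(s_0)$ on $[s_0,\infty)$; since $s_0$ is by definition the first zero of the continuous function $s\mapsto \psi(s)-s\psi'(s)$, this linear piece has vanishing intercept, giving $\ov{\psi}(s)=\Theta s$ for $s\geq s_0$. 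As $\ov{\psi}\in\mathcal C^1(\R_+)$ and grows at most linearly, the first variation computation of Proposition \ref{prop:EL} applies verbatim to $\ov{\F}$.

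My first step would be a verbatim replay of Step one of Proposition \ref{prop:charregcrit}: inserting tangential admissible velocities $(0,{\rm div}^{\partial E} T)$ in \eqref{eq:critfbar} yields $\nabla^{\partial E}(\ov{\psi}'(u))=0$ in the sense of distributions, so $\ov{\psi}'(u)$ is constant on each connected component of $\partial E$. Since $\ov{\psi}'$ is strictly increasing on $[0,s_0]$ and constant equal to $\Theta$ on $[s_0,\infty)$, on every connected component $\partial E_i$ exactly two alternatives remain: either $u\equiv c_i$ for some $c_i<s_0$, or $u\geq s_0$ holds $\H^{n-1}$-a.e.\ on $\partial E_i$.

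The principal obstacle will be discarding this second alternative. Assuming $u\geq s_0$ on some $\partial E_i$, I would select $v\in\mathcal C^1(\partial E_i)$ with $\int_{\partial E_i} v\,\d\H^{n-1}\neq 0$, set $w:=-v(uH_{\partial E_i}+\rho)$, extend both by zero on the other components so that $(v,w)\in\mathrm{Ad}(E,u)$, and substitute into \eqref{eq:critfbar}. Using $\ov{\psi}'(u)\equiv \Theta$ and $\ov{\psi}(u)=\Theta u$, the two occurrences of $\Theta\,u\,v\,H_{\partial E_i}$ cancel and one is left with
\[
0 = -\Theta\rho\int_{\partial E_i} v\,\d\H^{n-1},
\]
contradicting $\Theta,\rho>0$. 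Hence $u$ must be a constant $c_i<s_0$ on every connected component.

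At that stage $\ov{\psi}(c_i)=\psi(c_i)$ and $\ov{\psi}'(c_i)=\psi'(c_i)$ for every $i$, so \eqref{eq:critfbar} reduces to the Euler--Lagrange equation \eqref{eq:crit} associated with $\F$. Steps two, three and four of the proof of Proposition \ref{prop:charregcrit} then transfer without modification: the crucial non-vanishing $\psi(c_i)-c_i\psi'(c_i)>0$ is automatic since $c_i<s_0$; Alexandrov's theorem then forces each connected component of $\partial E$ to be a sphere of common radius, and a balance argument across components yields a common value $c<s_0$ satisfying \eqref{cstcritpointsvalue}, which concludes the argument.
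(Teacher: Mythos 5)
Your proof is correct and follows essentially the same route as the paper's: you replay Step one of Proposition \ref{prop:charregcrit} to obtain that $\ov{\psi}\,'(u)$ is constant on each connected component, rule out the case $u\geq s_0$ by exploiting that $\ov{\psi}$ is linear with zero intercept on $[s_0,\infty)$ together with an admissible velocity of nonzero average, and then reduce to Proposition \ref{prop:charregcrit} because $\psi$ and $\ov{\psi}$ coincide on $[0,s_0]$. The only cosmetic difference is that you observe the cancellation of the $\Theta u v H$ terms directly from $\ov{\psi}(u)=\Theta u$, whereas the paper factors out the pointwise-vanishing coefficient $\ov{\psi}(u)-u\Theta$; the two computations are the same.
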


\begin{proof}
Notice that $(E,u)\in\mathrm{Cl}(\widetilde{m})$, where $\widetilde{m}:=m-\mu^s(\mathbb R^n)$. Since $|E|>0$ we have that
$\widetilde{m}>0$.
In the case $\psi = \ov{\psi}$ the result follows using the same steps of the proof of Proposition \ref{prop:charregcrit} applied to the couple $(E,u)\in\mathrm{Cl}(\widetilde{m})$.

Otherwise, we will obtain the result by adapting the same proof as follows:
Step one implies that, on each connected component of $\pa E$, $\ov{\psi}\,'(u)$ is constant.
Thus, for every fixed connected component $(\partial E)_i$ of $\partial E$, we have two possibilities: $\ov{\psi}\,'(u) \equiv \Theta$ or $\ov{\psi}\,'(u) < \Theta$.

In the first case $u \geq s_0$ $\H^{n-1}$- a.e. on $(\partial E)_i$, so that $\ov{\psi} - u\ov{\psi}\,'(u) \equiv 0$.
We claim that this is impossible. Indeed, arguing as in Step two of Proposition \ref{prop:charregcrit},
take $v\in \mathcal{C}^1((\pa E)_i)$ such that
\begin{equation}\label{eq:nonvanishv}
\int_{(\pa E)_i}v\d\H^{n-1}\neq0\,,
\end{equation}
and consider the  admissible velocities $(v,-v(uH+\rho))\in \mathrm{Ad}(E,u)$.
Using the fact that $u$ is constant on $(\pa E)_i$ and \eqref{eq:critfbar}, we obtain
\begin{align*}
0 &=\left( \ov{\psi}(u)-u\Theta \right) \int_{(\pa E)_i} vH_{\pa E}\d\H^{n-1} - \rho\Theta\int_{(\pa E)_i}v\d\H^{n-1}\\
&=- \rho\Theta\int_{(\pa E)_i}v\d\H^{n-1}\neq0\,,
\end{align*}
where in the last step we used \eqref{eq:nonvanishv} and that $\rho, \Theta\neq0$.

So, we have that, on each connected component of $\partial E$, $\ov{\psi}\,'(u) < \Theta$, that in turn implies that 
$u< s_0$ $\H^{n-1}$-a.e. on $\partial E$. But for such values of $u$, the functions $\psi$ and $\ov{\psi}$ agree.
Thus we can conclude by arguing as in steps 2,3 and 4 of the proof of Proposition \ref{prop:charregcrit}.
\end{proof}

\begin{remark}
The necessary condition $c<s_0$ is physically relevant and it prevents, in the case $\psi\not\equiv\ov{\psi}$, the occurrence of large concentrations of atoms freely diffusing on the surface of the crystal. It will have a considerable importance in the study of gradient flows associated to $\ov{\F}$, as it will lead them to be attracted by points nearby which the equations are \textit{parabolic} (parabolicity will be given by $\psi(c) - c \psi'(c)>0$, \textit{i.e.}, by $c<s_0$).
\end{remark}

We now prove that the minimum of $\ov{\F}$ can be reached by balls with constant adatom density. Observe that due to the previous theorem, the density cannot be arbitrarily big (the balls cannot be arbitrarily small), even though a Dirac delta $(\varnothing, \delta)$ could still be a minimizer since this is not an absolutely continuous couple.

\begin{definition}
Fix $m>0$ and set
\[
\ov{\gamma}_m:=\inf\{\, \ov{\F}(E,\mu) \,:\, (E,\mu)\in \ov{\mathrm{Cl}}(m) \,\}\,,
\]
where
\[
\ov{\mathrm{Cl}}(m):=\left\{\, (E,\mu )\in \mathfrak S \,:\, \ov{\mathcal J}(E,\mu)=m \,\right\}\,,
\]
and
\[
\ov{\mathcal J}(E,\mu):=\rho|E| + \mu(\R^n)\,.
\]
\end{definition}

\begin{theorem}\label{rolexx}
Fix $m>0$. If $\psi$ satisfies the assumptions of Theorem \ref{thm:ex},
then there exist $R\in(\underline{R}_m,\ov{R}_m)$ and a constant $0 < c < s_0$ such that the pair $(B_R,c)\in\ov{\text{Cl}}(m)$, and
\[
\ov{\F}(B_R,c)=\ov{\gamma}_m = \gamma_m\,.
\]
Moreover, every minimizing couple $(E,\mu)\in\ov{\text{Cl}}(m)$ is such that either
$E$ is a ball or $E=\varnothing$.
\end{theorem}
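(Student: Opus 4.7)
The plan is to establish $\ov\gamma_m=\gamma_m$ with explicit minimizer and then characterize all minimizers by a reduction argument.

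\textbf{Step 1 (upper bound $\ov\gamma_m\leq\gamma_m$).} I will invoke Theorem \ref{thm:ex} to obtain $R\in(\underline R_m,\ov R_m)$ and $c>0$ with $(B_R,c)\in\text{Cl}(m)$ realizing $\F(B_R,c)=\gamma_m$. At such a minimizer, $e'(R)=0$ gives \eqref{cstcritpointsvalue}, and since $\rho\psi'(c)H_{\partial B_R}>0$ we have $\psi(c)-c\psi'(c)>0$, that is, $c<s_0$. On $[0,s_0]$ one has $\psi\equiv\ov\psi$ (see Remark \ref{recipe}), hence $\ov\F(B_R,c)=\F(B_R,c)=\gamma_m$. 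As $\text{Cl}(m)\subset\ov{\text{Cl}}(m)$ via the absolutely continuous identification, this yields $\ov\gamma_m\leq\gamma_m$.

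\textbf{Step 2 (lower bound $\ov\gamma_m\geq\gamma_m$).} Given $(E,\mu)\in\ov{\text{Cl}}(m)$ with $\ov\F(E,\mu)<\infty$, Theorem \ref{thm:relax} together with Remark \ref{rem:convconstraint} produces a recovery sequence $(E_k,u_k)\in\mathfrak S$ with $\F(E_k,u_k)\to\ov\F(E,\mu)$ and preserving the total mass; a minor adjustment of the scaling $\varepsilon_k\to 1$ in that construction ensures we may take the weighted constraint $\rho|E_k|+\mu_k(\R^n)=m$, so $(E_k,u_k)\in\text{Cl}(m)$. Then $\gamma_m\leq\F(E_k,u_k)\to\ov\F(E,\mu)$. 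Combined with Step 1, $\ov\gamma_m=\gamma_m$ and $(B_R,c)$ realizes it.

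\textbf{Step 3 (shape of minimizers).} Let $(E,\mu)\in\ov{\text{Cl}}(m)$ be a minimizer with $|E|>0$, and decompose $\mu=u\,\H^{n-1}\restr\pared E+\mu^s$. Set $s:=\mu^s(\R^n)$ and $\bar u:=P(E)^{-1}\int u\,\d\H^{n-1}$. Jensen's inequality for the convex function $\ov\psi$ gives $\int\ov\psi(u)\,\d\H^{n-1}\geq P(E)\ov\psi(\bar u)$, so we may replace $u$ by the constant $\bar u$ without increasing the energy. Next, let $R'$ be the unique radius with $\rho|B_{R'}|+\bar u\,P(B_{R'})=m-s$. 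I claim $P(B_{R'})\leq P(E)$ with strict inequality unless $E$ is a ball: indeed, if $|B_{R'}|\leq|E|$, then $P(B_{R'})\leq P(B^*)\leq P(E)$ with $B^*$ the ball of volume $|E|$, strict in the second inequality when $E$ is not a ball; if instead $|B_{R'}|>|E|$, the constraint forces $\bar u\,P(B_{R'})<\bar u\,P(E)$, and $\bar u>0$ in this alternative, yielding $P(B_{R'})<P(E)$.

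Placing a translate $\widetilde\mu^s$ of $\mu^s$ with support disjoint from $\overline{B_{R'}}$, the couple $(B_{R'},\bar u\,\H^{n-1}\restr\partial B_{R'}+\widetilde\mu^s)$ lies in $\ov{\text{Cl}}(m)$ and has energy
\[
\ov\psi(\bar u)P(B_{R'})+\Theta s\ <\ \ov\psi(\bar u)P(E)+\Theta s\ \leq\ \ov\F(E,\mu)=\ov\gamma_m,
\]
using $\ov\psi(\bar u)\geq\psi(0)>0$. This contradicts minimality, so $E$ must be a ball. The main technical points will be the constraint-preserving normalization in Step 2 and the case distinction on whether $|B_{R'}|$ lies above or below $|E|$ in Step 3, which reroutes the isoperimetric inequality depending on whether $\bar u=0$.
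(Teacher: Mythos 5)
Your proof is correct and follows the same overall route as the paper: obtain the minimizing ball from Theorem \ref{thm:ex} and check $c<s_0$ so that $\psi\equiv\ov\psi$ there (upper bound); pass through mass-preserving recovery sequences and Theorem \ref{thm:ex} applied to $(E_k,u_k)$ (lower bound); and reduce the shape of a minimizer via Jensen plus the isoperimetric inequality applied to its absolutely continuous part, carrying the singular mass along unchanged. Two points where you are actually more careful than the paper's write-up are worth highlighting. First, you note that Remark \ref{rem:convconstraint}, as stated, preserves $|E_k|+\mu_k(\R^n)$ rather than the $\rho$-weighted quantity $\mathcal J$, so the scaling $\varepsilon_k$ in the recovery construction must be tuned to the weighted constraint; the paper invokes the remark as if it gave $\mathcal J(E_k,u_k)=\ov{\mathcal J}(E,\mu)$ outright, so your observation is a genuine clarification. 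Second, in Step 3 you correctly compare $E$ with the ball $B_{R'}$ satisfying $\rho|B_{R'}|+\bar u\,P(B_{R'})=m-\mu^s(\R^n)$, so that the competitor $(B_{R'},\bar u|D\ca_{B_{R'}}|+\widetilde\mu^s)$ really lies in $\ov{\mathrm{Cl}}(m)$; the paper's final paragraph instead writes ``a ball $B$ with $|B|=|E|$,'' which does not satisfy the constraint and should be read as the constrained ball from Lemma \ref{reductionto}. Your explicit two-case comparison of $|B_{R'}|$ against $|E|$ is an equivalent reformulation of that lemma's contradiction argument; both handle the degenerate case $\bar u=0$ correctly.
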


\begin{proof}
Let $(E,\mu)\in\ov{\text{Cl}}(m)$
and let $((E_k,u_k))_{k\in\N}~\subset \mathfrak S$ be a recovery sequence given by Theorem \ref{thm:relax}, \emph{i.e.},
\[
\F(E_k,u_k)\rightarrow\ov{\F}(E,\mu)\,.
\]
By Remark \ref{rem:convconstraint} we have that
\begin{equation}\label{eq:convconstr}
\mathcal J(E_k,u_k)=\ov{\mathcal J}(E,\mu)\,.
\end{equation}
By Theorem \ref{thm:ex} we know that there exist $R\in(\underline{R}_m,\ov{R}_m)$ and $c>0$ such that
\[
\mathcal  J(B_R,c)=\mathcal  J(E_k,u_k)\,,\quad\quad
\F(B_R,c)=\gamma_{m}\,.
\]
Moreover, if $E_k$ is not a ball, then
\[
\F(B_R,c)<\F(E_k,u_k)\,.
\]
Thus
\[
\ov{\F}(E,\mu)=\lim_{k\rightarrow\infty}\F(E_k,u_k)\geq\F(B_R,c)=\ov{\F}(B_R,c)\,.
\]
In particular, if we take $((F_k,w_k))_{k\in\N}$ to be a minimizing sequence for the constrained minimization problem for $\ov{\F}$, we get that
\[
\ov{\gamma}_m=\lim_{k\rightarrow\infty}\ov{\F}(F_k,w_k)\geq\ov{\F}(B_R,c)\geq\ov{\gamma}_m\,,
\]
that is $\ov{\F}(B_R,c)=\ov{\gamma}_m$.

Finally, let $(E,u|D\ca_E|+\mu^s)$ be a minimizer of $\ov{\F}$ in $\ov{\mathrm{Cl}}(m)$ with $|E|>0$ and assume $E$ is not a ball. Set $m_1:=m-\mu_s(\R^N)>0$. Then $(E, u)\in\mathrm{Cl}(m_1)$. Thus, applying Lemma \ref{reductionto} to this couple, we get that
\[
\F(E,u)>\F(B,\ov{u})\,,
\]
where $B$ is a ball with $|B|=|E|$ and $\ov{u}:=\fint_{\pared E}u\,\d\H^{n-1}$.
Then $(E,u|D\ca_E|+\mu^s)\in\ov{\mathrm{Cl}}(m)$ and
\[
\ov{\F}(E,u|D\ca_E|+\mu^s)>\ov{\F}(B,\ov{u}|D\ca_B|+\mu^s)\,,
\]
which is in contradiction with the minimality of $(u|D\ca_E|+\mu^s)$.
\end{proof}

\begin{remark}
We would like to point out that the strategy we used to deal with this "constrained relaxation" problem is not usual. Indeed, it is more customary to insert the mass constraint in the definition of the functional, \emph{i.e.}, define for $m>0$,

\[
\F_m(E,\mu):=
\left\{
\begin{array}{ll}
\displaystyle\int_{\partial^*E} \psi(u)\,\d\H^{n-1} & \text{ if } \mu = u|D\ca_E| \text{ with } (E,u)\in \mathrm{Cl}(m)\,,\\
&\\
+\infty & \text{ otherwise}\,,
\end{array}
\right.
\]
and then compute the relaxation of $\F_m$. We avoided to do that because we were able to recover the energy of every $(E,\mu)\in\mathfrak{S}$ satisfying $\ov{\mathcal J}(E,\mu)=m$ with sequences satisfying the same mass constraint, as explained in Remark \ref{rem:convconstraint}.
\end{remark}

\begin{remark}
Minimizers of $\ov{\F}$ can have less structure than minimizers of $\F$ in the following terms:
\begin{enumerate}[i)]
\item the additivity of the singular part of $\ov{\F}$ allows for a huge variety of phenomena. For instance, if $\Theta \gamma_m = m$, any couple of Dirac deltas suitably weighted will produce a minimizing couple $(\varnothing, m_1 \delta_1 + m_2 \delta_2)$.
\item for the same reason, if there exists a minimizer $(E,u|D\ca_E|+\mu^s)$ with a non-zero singular part $\mu^s$, any couple $\mu_1^s, \mu_2^s$ such that $(\mu_1^s + \mu_2^s)(\R^n) = \mu^s(\R^n)$ will produce another minimizer $(E,u|D\ca_E| + \mu_1^s + \mu_2^s)$.
\end{enumerate}
Observe that there are two distinct ways of seeing a ball with constant adatom density in our setting.
One is $(B_{R(c)}, c)$ representing a ball of crystal with a constant adatom density on its surface. Another is
$(\varnothing, \rho \ca_{B_{R(c)}} \L^n + c \H^{n-1}\restr\pa B_{R(c)})$
These representations have the same mass but the former one is better energetically, provided
$$ \ov{\psi}(c) \leq \Theta c + \frac{\Theta  \rho R(c)}{n}.$$
\end{remark}


\appendix  

\section{Convex subadditive envelope of a function}\label{seccvxsubadd}

\begin{definition}
Let $g:\R\rightarrow\R$. We say that $g$ is \emph{subadditive} if for every $r,s\in\R$,
\[
g(r+s)\leq g(r)+g(s)\,.
\]
\end{definition}

\begin{definition}\label{def:convexsubadditiveenvelope}
Let $g:[0,\infty)\rightarrow\R$ be a function. We define its \emph{convex subadditive envelope}
$\text{convsub}(g):[0,\infty)\rightarrow\R$ as
\[
\text{convsub}(g)(s):=\sup\{\, f(s) \,:\, f:[0,\infty)\rightarrow\R \text{ is convex, subadditive and }f\leq g \,\}\,.
\]
\end{definition}

The aim of this section is to characterize the convex subadditive envelope of admissible energy densities (see  Definition \ref{def:energy}). To this end, we need a few preliminary results which are related to the parabolicity condition \eqref{paracond}.

\begin{lemma}\label{cvxsubaddparacd}
Let $g: (0,+\infty) \to \mathbb R$ be convex and subadditive. Then, $s\mapsto g(s)/s$ is non-increasing in $(0,+\infty)$.
In particular for $\mathcal{L}$-a.e. $s\in\R$ we have
\[
g(s) - g'(s)s \geq 0\,.
\]

\end{lemma}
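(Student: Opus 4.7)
The plan is to establish the key pointwise inequality
\[
g(s) \geq s\, g'_+(s) \qquad \text{for every } s>0,
\]
where $g'_+$ denotes the right derivative, which exists everywhere on $(0,\infty)$ by convexity. Both conclusions of the lemma will follow from it quickly.

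To prove the key inequality I would combine subadditivity and convexity in their simplest manifestations, applied to $2s$. On the one hand, subadditivity with $t=s$ gives $g(2s)\leq 2g(s)$. On the other hand, the affine support at $s$ coming from convexity gives $g(2s)\geq g(s)+g'_+(s)\cdot s$. Chaining these two inequalities,
\[
g(s)+s\,g'_+(s)\;\leq\;g(2s)\;\leq\;2g(s),
\]
so $g(s)-s\,g'_+(s)\geq 0$.

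For the monotonicity of $s\mapsto g(s)/s$, I would pick $0<s<t$ and use the convexity support at $t$ (this time with the left derivative): $g(s)\geq g(t)+g'_-(t)(s-t)=g(t)-(t-s)g'_-(t)$. Multiplying by $t$ and rearranging,
\[
t\,g(s)-s\,g(t)\;\geq\;(t-s)\bigl(g(t)-t\,g'_-(t)\bigr).
\]
Since $g'_-(t)\leq g'_+(t)$, the key inequality applied at $t$ gives $g(t)-t\,g'_-(t)\geq g(t)-t\,g'_+(t)\geq 0$, so $t\,g(s)\geq s\,g(t)$, i.e.\ $g(s)/s\geq g(t)/t$. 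Finally, a convex function is differentiable at $\mathcal L$-a.e.\ point with $g'=g'_+$ a.e., so the key inequality immediately yields $g(s)-g'(s)s\geq 0$ for a.e.\ $s$.

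The main obstacle is really just spotting the ``two-line'' proof of the key inequality: neither convexity nor subadditivity alone suffices (convexity alone does not control the behaviour of $g$ near $0$, while subadditivity alone does not produce tangent inequalities), and the trick is that $g(2s)$ is squeezed between an upper bound from subadditivity and a lower bound from convexity whose combination cancels cleanly.
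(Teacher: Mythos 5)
Your proof is correct, but it runs in the opposite direction from the paper's. The paper first proves monotonicity of $s\mapsto g(s)/s$ by contradiction: supposing $g(r)/r<g(s)/s$ for some $0<r<s$, it sets $t=r+s$, uses subadditivity to bound the chord slope of $g$ over $[s,t]$ by $g(r)/r$, and observes that the assumption forces the chord slope over $[r,s]$ to exceed $g(r)/r$, contradicting the monotonicity of chord slopes that convexity demands; the a.e.\ inequality $g(s)-g'(s)s\geq 0$ is then read off from the monotonicity. You instead prove the pointwise inequality $g(s)\geq s\,g'_+(s)$ first, by squeezing $g(2s)$ between the subadditivity upper bound $2g(s)$ and the supporting-line lower bound $g(s)+s\,g'_+(s)$, and then derive monotonicity from it via the supporting line at $t$. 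Your route is more direct (no contradiction), is arguably easier to check, and yields the slightly sharper statement that $g(s)-s\,g'_+(s)\geq 0$ holds at \emph{every} $s>0$ with the right derivative, not just $\mathcal L$-a.e.; the paper's route makes the monotonicity the primary object, which is the form most used elsewhere in the paper. Both proofs are sound and of comparable length.
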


\begin{proof}
Assume, by contradiction,  that there exist $0<r<s$ with
\begin{equation}\label{eq:contr}
\frac{g(r)}{r} < \frac{g(s)}{s}\,.
\end{equation}
Let $t:=r+s$. By subadditivity, we get 
\begin{equation}\label{eq:convineq}
\frac{g(t)-g(r)}{t-s}=\frac{g(r+s)-g(s)}{r} \leq \frac{g(r)}{r}\,.
\end{equation}
Moreover, \eqref{eq:contr} yields
$$\frac{g(r)}{r} < \frac{g(s)-g(r)}{s-r}.$$
These two inequalities together violate the convexity of $g$.

Finally, since $r \mapsto g(r)/r$ is non-increasing, it is differentiable $\mathcal{L}$-a.e. on $\R$. In particular, fixed $r\in\R$ for which $g'(r)$ exists, we have that
\[
g'(r)=\lim_{s\rightarrow r^+}\frac{g(s)-g(r)}{s-r}\leq\frac{g(r)}{r}\,,
\]
where in the last step we used \eqref{eq:convineq}.
\end{proof}

\begin{lemma}\label{noninc}
Let $g:(0,+\infty)\rightarrow\R$ be a convex function. Let $D\subset\R$ be the set where $g'$ is defined.
Then, the function $r \mapsto g(r)-g'(r)r$ is non-increasing on $D$.
\end{lemma}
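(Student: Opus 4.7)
The plan is to exploit the fundamental tangent-line inequality coming from convexity, combined with the monotonicity of the derivative. Fix two points $r_1,r_2\in D$ with $0<r_1<r_2$; the goal is to show
\[
g(r_2)-g'(r_2)\,r_2\;\leq\;g(r_1)-g'(r_1)\,r_1\,.
\]

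First I would apply convexity at $r_2$: since $g'(r_2)$ exists and $g$ is convex on $(0,+\infty)$, the graph of $g$ lies above the tangent line at $r_2$, so
\[
g(r_1)\;\geq\;g(r_2)+g'(r_2)(r_1-r_2)\,.
\]
Rearranging immediately yields
\[
g(r_2)-g'(r_2)\,r_2\;\leq\;g(r_1)-g'(r_2)\,r_1\,.
\]
Next, I would use the fact that the derivative of a convex function is non-decreasing on its domain of definition $D$ (this follows from the monotonicity of subgradients, or directly from convexity by a standard difference-quotient argument). Hence $g'(r_1)\leq g'(r_2)$, and multiplying by $r_1>0$ gives $r_1 g'(r_1)\leq r_1 g'(r_2)$, that is,
\[
-g'(r_2)\,r_1\;\leq\;-g'(r_1)\,r_1\,.
\]
Combining the two displayed inequalities,
\[
g(r_2)-g'(r_2)\,r_2\;\leq\;g(r_1)-g'(r_2)\,r_1\;\leq\;g(r_1)-g'(r_1)\,r_1\,,
\]
which is exactly the monotonicity statement.

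There is no real obstacle here: the only slightly subtle point is that $g'$ is not assumed to exist everywhere, but since we only compare the quantity $g(r)-rg'(r)$ at points of $D$, and since the monotonicity of $g'$ on $D$ still holds (either by using one-sided derivatives, which exist everywhere for convex functions, and noting that they coincide with $g'$ on $D$, or by a direct convexity argument), the above steps go through unchanged. If one had access to $\mathcal{C}^2$ regularity the result would be instantaneous, since $(g(r)-rg'(r))'=-rg''(r)\leq0$; the proof above is simply the $\mathcal{C}^1$-on-$D$ substitute for this one-line computation.
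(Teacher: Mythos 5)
Your proof is correct and is essentially the paper's argument in a different notation: the paper writes $g(s)-g(r)=\int_r^s g'(t)\,\mathrm{d}t\leq g'(s)(s-r)$, which is exactly your tangent-line (supporting-line) inequality at $r_2=s$, and then both proofs finish by invoking monotonicity of $g'$ and positivity of $r$. The two-step split you use (first compare against the tangent at $r_2$, then replace $g'(r_2)$ by $g'(r_1)$) is the same mechanism the paper compresses into one chain of inequalities.
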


\begin{proof}
It suffices to observe that for any $0 < r \leq  s$, since $g'$ is a.e. non-decreasing and $r< 0$,
$$g'(s)s - g'(r) r \geq g'(s)(s-r) \geq \int_r^s g'(t) \,\mathrm{d} t = g(s)-g(r)$$
\end{proof}

We now recall a classical result for convex functions (see \cite[Proposition 2.31]{AFP} and \cite{Pollard}, Appendix).

\begin{lemma}\label{lem:supconv}
Let $g:\R\rightarrow\R$ be a convex function. Then, there exist families $(a_j)_{j\in\mathbb{N}}$
and $(b_j)_{j\in\mathbb{N}}$ of real numbers such that
\[
g(r)=\sup_j\{\, a_j r+b_j\,\}\,.
\]
Moreover as $r\to +\infty$ $$\frac{g(r)-g(0)}{r} \nearrow  \sup_j \{a_j\}.$$
\end{lemma}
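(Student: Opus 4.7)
The plan is to build the family of affine minorants out of supporting lines at each point, then use density to reduce to a countable subfamily. Since $g:\R\to\R$ is convex, at every point $r_0\in\R$ the right derivative $g'_+(r_0)$ exists and satisfies the supporting line inequality
\[
g(r)\geq g(r_0)+g'_+(r_0)(r-r_0)\qquad\text{for all }r\in\R.
\]
Setting $a_{r_0}:=g'_+(r_0)$ and $b_{r_0}:=g(r_0)-g'_+(r_0)r_0$, the affine map $\ell_{r_0}(r):=a_{r_0}r+b_{r_0}$ lies below $g$ and touches it at $r_0$.

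Now enumerate a countable dense set $D=\{r_j\}_{j\in\N}\subset\R$ (e.g. the rationals), let $a_j:=a_{r_j}$, $b_j:=b_{r_j}$, and let $h(r):=\sup_j\{a_jr+b_j\}$. Clearly $h\leq g$. For the reverse inequality, fix $r\in\R$ and a sequence $r_{j_k}\to r$ in $D$. Since $g$ is finite and convex on $\R$, it is continuous and its right derivative is locally bounded (being monotone and finite), so $|a_{r_{j_k}}|\leq C$ for $r_{j_k}$ in a bounded neighborhood of $r$. Then
\[
\ell_{r_{j_k}}(r)=g(r_{j_k})+a_{r_{j_k}}(r-r_{j_k})\longrightarrow g(r),
\]
using $g(r_{j_k})\to g(r)$ and $|a_{r_{j_k}}(r-r_{j_k})|\to 0$. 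Hence $h(r)\geq g(r)$, proving the first part.

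For the \textquotedblleft moreover\textquotedblright\ statement, set $L:=\lim_{r\to+\infty}(g(r)-g(0))/r\in\R\cup\{+\infty\}$; the limit exists and is monotone nondecreasing by convexity (the slopes of chords from $(0,g(0))$ are nondecreasing in the other endpoint). On one hand, the inequality $a_jr+b_j\leq g(r)$ gives
\[
a_j\leq\frac{g(r)-b_j}{r}\xrightarrow[r\to+\infty]{}L,
\]
so $\sup_j a_j\leq L$. On the other hand, since the right derivative $r\mapsto g'_+(r)$ is nondecreasing, it has a limit $M\in\R\cup\{+\infty\}$ at $+\infty$; writing $g(r)-g(0)=\int_0^r g'_+(s)\,\d s$ (convex functions are absolutely continuous) and passing to the limit $r\to+\infty$ via the monotone convergence (Cesàro-type) argument yields $(g(r)-g(0))/r\to M$, hence $M=L$. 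Since $a_j=g'_+(r_j)$ and $r_j$ ranges over a dense set (in particular unbounded), we get $\sup_j a_j\geq\sup_{r_j\to+\infty}g'_+(r_j)=L$, so equality holds.

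The only mildly delicate point is handling the case $L=+\infty$ uniformly with the finite case; both inequalities above remain valid in $[-\infty,+\infty]$, so no separate treatment is required. Beyond that, the argument is a routine application of three classical facts about convex functions on $\R$: existence of one-sided derivatives everywhere, local boundedness of these derivatives, and absolute continuity — the latter being what converts monotone pointwise information on $g'_+$ into information on $(g(r)-g(0))/r$.
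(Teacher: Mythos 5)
Your proof is correct, and it implements exactly the construction that the paper alludes to: the paper does not prove this lemma itself but cites \cite[Proposition 2.31]{AFP} and \cite{Pollard} and, in the remark immediately after, suggests taking supporting lines at the rationals with slopes between the one-sided derivatives — you chose the right derivative, which is one valid choice. Both parts of your argument (density of supporting lines to recover $g$, and the identification of $\sup_j a_j$ with the recession slope via the Cesàro mean of $g'_+$) are sound; the only minor presentational point is that $\sup_{r_j\to+\infty}g'_+(r_j)$ should really read as $\lim_{r\to+\infty}g'_+(r)$, which equals $\sup_{r_j\in D}g'_+(r_j)$ by monotonicity of $g'_+$ and unboundedness of $D$.
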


\begin{remark}
In Lemma \ref{lem:supconv}, one can select the supremum of all affine functions that equal $g$ at all rational numbers and with slope equal to or in between its left and right derivatives there. When $g$ is $\mathcal C^1$ these are just the tangents of $g$ at the rationals.
\end{remark}

We now introduce the main object we need in order to identify the relaxation of our functional $\F$.

\begin{definition}\label{def:psibar}
Let $g:\R\rightarrow[0,\infty)$ be as in Definition \ref{def:energy}.
Let $(a_j)_{j\in\mathbb{N}}$ and $(b_j)_{j\in\mathbb{N}}$ be the two families given by the previous lemma.
We define
\[
\ov{g}(r) =  \sup\{\, a_j r + b_j \,:\, j\in\mathbb{N}\,,\, b_j\geq0 \,\}\,,
\]
\end{definition}

\begin{remark}
\label{ajpos}
Notice that since $\psi$ is increasing, we have that $a_j \geq 0$ for all $j \in \N$.
\end{remark}

\begin{proposition}\label{prop:convsub}
Let $\psi:\R\rightarrow[0,\infty)$ be as in Definition \ref{def:energy}.
Then $\ov{\psi}$ is the convex subadditive envelope of $\psi$.
\end{proposition}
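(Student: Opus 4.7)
The plan is to check that $\ov{\psi}$ satisfies the four characterizing properties of the convex subadditive envelope of $\psi$: convexity, subadditivity, the bound $\ov{\psi} \leq \psi$, and maximality among convex subadditive functions dominated by $\psi$. The first and third are immediate: $\ov{\psi}$ is a pointwise supremum of affine functions, hence convex, and $\ov{\psi}(r) \leq \psi(r)$ follows at once from $a_j r + b_j \leq \psi(r)$ in Lemma \ref{lem:supconv}. For subadditivity, note that for each $j$ with $b_j \geq 0$, the affine function $a_j r + b_j$ is subadditive on $[0,\infty)$:
\[
a_j(r+s) + b_j = (a_j r + b_j) + (a_j s + b_j) - b_j \leq (a_j r + b_j) + (a_j s + b_j),
\]
and the pointwise supremum of subadditive functions is subadditive.

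The main step is maximality. Let $g$ be an arbitrary convex, subadditive function with $g \leq \psi$. Applying Lemma \ref{lem:supconv} with its accompanying remark to $g$, I can write $g(r) = \sup_k (c_k r + d_k)$ where $c_k r + d_k$ equals $g$ at some rational $r_k > 0$ with slope $c_k$ lying in the subdifferential $[g'_{-}(r_k), g'_{+}(r_k)]$. By Lemma \ref{cvxsubaddparacd} applied to $g$, the ratio $s \mapsto g(s)/s$ is non-increasing on $(0,\infty)$, so $g'_{+}(r_k)\, r_k \leq g(r_k)$. Since $c_k \leq g'_{+}(r_k)$, this forces $d_k = g(r_k) - c_k r_k \geq 0$. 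Therefore each $c_k r + d_k$ is an affine minorant of $\psi$ with non-negative intercept.

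It remains to show that any affine function $\ell(r) = \alpha r + \beta$ with $\beta \geq 0$ and $\ell \leq \psi$ satisfies $\ell \leq \ov{\psi}$ pointwise. Using that the family in Lemma \ref{lem:supconv} applied to the $\mathcal{C}^1$ function $\psi$ consists of its tangent lines at rationals, continuity of $\psi'$ and density of the rationals give the identification
\[
\ov{\psi}(r) = \sup_{r_0 \in [0, s_0]} \bigl[\psi'(r_0) r + \psi(r_0) - r_0 \psi'(r_0)\bigr],
\]
where the case $s_0 = \infty$ yields $\ov{\psi} \equiv \psi$ directly. Using Lemma \ref{noninc} and the continuous vanishing of $\psi(s) - s\psi'(s)$ at $s_0$, a direct maximization in $r_0$ shows that $\ov{\psi} = \psi$ on $[0, s_0]$ and $\ov{\psi}(r) = \psi'(s_0)\, r$ on $[s_0, \infty)$. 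On $[0, s_0]$ the conclusion $\ell \leq \ov{\psi}$ is immediate from $\ell \leq \psi = \ov{\psi}$. For $r \geq s_0$, I first rule out $\alpha > \psi'(s_0)$: if this held, then $\alpha = \psi'(r^*)$ for some $r^* > s_0$, and the tangent of $\psi$ at $r^*$ would be the maximal affine minorant of $\psi$ with slope $\alpha$, yet its intercept $\psi(r^*) - r^* \psi'(r^*)$ is strictly negative by Lemma \ref{noninc}, forcing $\beta < 0$ and contradicting $\beta \geq 0$. Combined with $\ell(s_0) \leq \psi(s_0) = \psi'(s_0)\, s_0$, the inequality $\ell(r) \leq \psi'(s_0)\, r = \ov{\psi}(r)$ on $[s_0, \infty)$ follows from linearity of both sides.

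The main obstacle lies in this final case analysis, which requires simultaneously leveraging the parabolicity condition of Lemma \ref{noninc} to identify the piecewise structure of $\ov{\psi}$ at the transition $s_0$ and to exclude affine minorants of $\psi$ with slopes exceeding $\psi'(s_0)$. Once these two geometric facts are secured, the remainder of the argument reduces to routine manipulations.
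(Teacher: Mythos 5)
Your proof is essentially correct but follows a genuinely different route from the paper's. The paper first isolates the piecewise structure of $\ov{\psi}$ in Lemma~\ref{stick} (equal to $\psi$ on $[0,s_0]$, linear with slope $\psi'(s_0)$ thereafter) and then proves maximality by contradiction: letting $R$ denote the convex subadditive envelope and $r_*\ge s_0$ the first point where $\ov\psi<R$, one has $R(r_*)=\ov\psi(r_*)$ and $R'(r_*^+)>\psi'(s_0)$, which forces $R(r_*)-R'(r_*^+)r_*<0$, violating Lemma~\ref{cvxsubaddparacd} applied to $R$. You instead take an arbitrary convex subadditive competitor $g\le\psi$, decompose it as the supremum of its supporting lines $c_kr+d_k$ at rationals via Lemma~\ref{lem:supconv}, use Lemma~\ref{cvxsubaddparacd} on $g$ to show each intercept $d_k\ge 0$, and then reduce matters to showing that \emph{every} affine minorant of $\psi$ with non-negative intercept lies below $\ov\psi$. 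This is a more explicit, structural argument: it exhibits the convex subadditive envelope as the supremum over \emph{all} tangent lines with non-negative $y$-intercept (not merely the countable family defining $\ov\psi$), which makes the mechanism more transparent. Both proofs ultimately rest on the same two facts: the parabolicity condition of Lemma~\ref{cvxsubaddparacd} and the monotonicity of Lemma~\ref{noninc}. Your route is a bit longer but perhaps more self-contained, since you re-derive the piecewise description of $\ov\psi$ inline rather than invoking Lemma~\ref{stick}.

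There is one small gap in your final step. When you rule out $\alpha>\psi'(s_0)$, you assert that ``$\alpha=\psi'(r^*)$ for some $r^*>s_0$.'' This appeals to the intermediate value theorem for $\psi'$, but it fails in the boundary case $\alpha=\Theta:=\lim_{r\to\infty}\psi'(r)<\infty$ when $\psi'$ never attains its limit (note $\alpha\le\Theta$ is forced by $\ell\le\psi$, so this is the only case the IVT misses). It is easily patched: since $\psi'\le\Theta$, the function $r\mapsto\psi(r)-\Theta r$ is non-increasing, and for $r>s_0$ one has $\psi(r)-\Theta r\le\psi(r)-r\psi'(r)<0$, so $\beta\le\inf_r\big(\psi(r)-\Theta r\big)<0$, again contradicting $\beta\ge 0$. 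With that addition the argument is complete.
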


We divide the proof of the above proposition in a sequence of lemmas.

\begin{lemma}
Let $\psi$ and $\ov{\psi}$ be as in Definition \ref{def:psibar}. Then $\ov{\psi}$ is convex and subadditive.
\end{lemma}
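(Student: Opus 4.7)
The plan is to exploit the affine representation of $\ov{\psi}$ given in Definition \ref{def:psibar}, namely
\[
\ov{\psi}(r) = \sup\{\, a_j r + b_j \,:\, j\in\mathbb{N},\, b_j \geq 0 \,\}\,,
\]
together with the sign information on the coefficients provided by Remark \ref{ajpos} (i.e.\ $a_j \geq 0$ for all $j$ since $\psi$ is non-decreasing).

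For convexity, I would simply note that each map $r \mapsto a_j r + b_j$ is affine, hence convex, and the pointwise supremum of a family of convex functions is again convex. So $\ov{\psi}$ is convex on $[0,\infty)$ with no further work.

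For subadditivity, fix $r,s \geq 0$ and let $j\in\mathbb{N}$ be any index with $b_j \geq 0$ admitted in the supremum defining $\ov{\psi}$. Writing
\[
a_j(r+s) + b_j = (a_j r + b_j) + a_j s\,,
\]
and using $a_j \geq 0$ together with $b_j \geq 0$, I get $a_j s \leq a_j s + b_j \leq \ov{\psi}(s)$ and $a_j r + b_j \leq \ov{\psi}(r)$. Therefore
\[
a_j(r+s) + b_j \leq \ov{\psi}(r) + \ov{\psi}(s)\,.
\]
Taking the supremum over the admissible indices $j$ on the left-hand side yields $\ov{\psi}(r+s) \leq \ov{\psi}(r) + \ov{\psi}(s)$, which is the desired subadditivity.

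There is no real obstacle here: the whole argument reduces to a one-line manipulation once one has the variational representation of $\ov{\psi}$ and the sign constraints $a_j, b_j \geq 0$. The only mild subtlety worth flagging explicitly is that both signs are needed (positivity of $b_j$ to insert the extra $b_j$ on the right, positivity of $a_j$ from Remark \ref{ajpos} to ensure $a_j s \leq a_j s + b_j$).
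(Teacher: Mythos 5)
Your proof is correct and follows essentially the same route as the paper's: exploit the variational representation of $\ov{\psi}$ as a supremum of nonnegative-intercept affine functions, observe convexity as a supremum of convex functions, and for subadditivity split $a_j(r+s)+b_j$ and insert an extra $b_j\geq 0$ before bounding each piece by $\ov{\psi}$. The only difference is cosmetic: the paper works with an $\varepsilon$-approximation of the supremum while you take the supremum directly over admissible $j$ at the end; both are fine. One small slip in your exposition: you attribute the step $a_j s \leq a_j s + b_j$ to the positivity of $a_j$, but that inequality is exactly $0 \leq b_j$ and uses only the sign constraint on the intercept. In fact $a_j \geq 0$ is not needed anywhere in this argument (nor does the paper use it here); the whole subadditivity argument rests solely on $b_j \geq 0$.
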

\begin{proof}
As a supremum of affine functions, $\ov{\psi}$ is convex.
Further, for all $\varepsilon > 0$ there exists $j\in\mathbb{N}$ such that
\begin{align*}
\ov{\psi}(r+s) &\leq a_j(r+s) + b_j + \varepsilon \\
			&\leq a_j r + b_j + a_j s + b_j + \varepsilon \quad \text{(since } b_j \geq 0) \\
			&\leq \ov{\psi}(r) + \ov{\psi}(s) + \varepsilon
\end{align*}
The arbitrariness of $\varepsilon > 0$ leads to the subadditivity.
\end{proof}

\begin{lemma}\label{stick}
Let $\psi, \ov{\psi}$ be as above. Let
\[
y=\psi'(r)r+b(r)
\]
be the equation of the tangent line to the graph of $\psi$ at the point $\left(r,\psi(r)\right)$.
Define
\[
s_0:=\sup\{\, r\in[0,\infty) \,:\, b(r) \geq 0 \,\}\,.
\]
Then $\psi\equiv\ov{\psi}$ in $[0,s_0]$, and $\ov{\psi}$ is linear on $[s_0,\infty)$ (with eventually $s_0 = +\infty$).
\end{lemma}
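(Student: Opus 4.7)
The plan is first to fix the representation of $\psi$ as a supremum of affine functions appearing in Definition \ref{def:psibar}. By the remark following Lemma \ref{lem:supconv}, since $\psi$ is $\mathcal{C}^1$, we may take, for any enumeration $(q_j)_{j\in\N}$ of $\mathbb{Q} \cap [0,\infty)$, the tangent data $a_j := \psi'(q_j)$ and $b_j := \psi(q_j) - \psi'(q_j) q_j = b(q_j)$, obtaining $\psi(r)=\sup_j\{a_j r + b_j\}$. By Lemma \ref{noninc}, $r\mapsto b(r)$ is non-increasing on $[0,\infty)$, and since $b(0)=\psi(0)>0$ we have $s_0>0$; by continuity of $b$, the requirement $b_j \geq 0$ in Definition \ref{def:psibar} amounts to $q_j \in [0,s_0]$.

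For the first assertion $\psi \equiv \ov{\psi}$ on $[0,s_0]$, the inequality $\ov{\psi} \leq \psi$ holds on all of $[0,\infty)$, since each $r \mapsto a_j r + b_j$ is a supporting line of the convex function $\psi$. For the reverse inequality, fix $r \in [0,s_0]$ and approximate by rationals $q_{j_k}\in[0,s_0]$ with $q_{j_k} \to r$, so that $a_{j_k} r + b_{j_k} = \psi'(q_{j_k})(r - q_{j_k}) + \psi(q_{j_k}) \to \psi(r)$ by the continuity of $\psi$ and $\psi'$. For the second assertion, fix $r > s_0$ and set $\varphi(q) := \psi'(q)(r-q) + \psi(q)$, so that $a_j r + b_j = \varphi(q_j)$. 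Using the convexity inequality $\psi(q_2)-\psi(q_1)\geq \psi'(q_1)(q_2-q_1)$ together with the monotonicity of $\psi'$, a short computation yields, for $0 \leq q_1 \leq q_2 < r$,
\[
\varphi(q_2)-\varphi(q_1) \geq (r-q_2)\bigl(\psi'(q_2)-\psi'(q_1)\bigr) \geq 0,
\]
so $\varphi$ is non-decreasing on $[0,r)$. Since $s_0 < r$, passing to the supremum over $q_j \in \mathbb{Q}\cap[0,s_0]$ and using continuity at $s_0$ yields $\ov{\psi}(r) = \varphi(s_0) = \psi'(s_0)(r-s_0) + \psi(s_0)$, which is affine in $r$ and matches $\psi(s_0)$ at $r=s_0$. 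If $s_0=+\infty$ all $b_j \geq 0$, so $\ov{\psi}\equiv\psi$ and there is nothing else to prove.

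The main obstacle is twofold. First, one must pin down the specific family defining $\ov{\psi}$ so that Definition \ref{def:psibar} is unambiguous; the remark following Lemma \ref{lem:supconv} supplies the canonical choice of tangents at rationals, which is precisely what makes the condition $b_j \geq 0$ translate cleanly into $q_j \leq s_0$. Second, the key monotonicity of $\varphi$ on $[0,r)$ reduces an a priori complicated supremum to a one-sided limit at $s_0$; since $\psi$ is only $\mathcal{C}^1$, this cannot be derived from a $\psi''$-calculation but follows cleanly from the supporting-line inequality for the convex function $\psi$.
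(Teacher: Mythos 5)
Your proof is correct and follows essentially the same approach as the paper: both pin down the tangent-line representation from the remark after Lemma \ref{lem:supconv}, use the monotonicity of $b$ to identify the constraint $b_j \geq 0$ with $q_j \leq s_0$, and deduce the two claimed properties from there. You fill in a detail the paper only asserts — namely, that for $r > s_0$ the supremum is approached as $q \to s_0^-$ (your monotonicity of $\varphi$ via the supporting-line inequality), whereas the paper simply states that a maximizing sequence has $b_j \to 0$ without justification.
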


\begin{proof}
Notice that $b(0)=\psi(0)>0$ and that, since $\psi$ is $\mathcal C^1$ and convex, $b$ is non-increasing and continuous.
Thus, we have two cases: either $b(r)\geq 0$ for all $r\in(0,\infty)$, and in that case $\psi=\ov{\psi}$ in all $[0,\infty)$,
or there exists a point $r\in(0,\infty)$ such that $b(r) < 0$. In the latter, by continuity and monotonicity of $b$, we have that
\begin{equation}\label{eqn: xnot}
s_0:= \sup\{\, r\in[0,\infty) \,:\, b(r)\geq 0 \,\}
\end{equation}
is a well defined number in $(0,+\infty)$. Since, by definition,
\[
\ov{\psi} = \sup\{\, a_j r + b_j \,:\, j\in\mathbb{N}\,,\, b_j\geq0 \,\}\,,
\]
it is now clear, using Lemma \ref{lem:supconv}, that $\psi$ and $\ov{\psi}$ coincide on $[0,s_0]$. Moreover, from the above we get that a maximizing sequence in the definition of $\ov{\psi}(r)$ when $r>s_0$ satisfies $b_j \to 0$, thus $\ov{\psi}$ is a linear extension of $\psi$ past $s_0$.
\end{proof}

\begin{proof}[Proof of Proposition \ref{prop:convsub}]
Call $R$ the convex subadditive envelope of $\psi$. In the case $\ov{\psi} = \psi$ we have $\ov{\psi} = \psi = R$ so there is nothing to prove.
Assume that $\ov{\psi} = \psi$ only on some $[0, s_0]$. Assume, by contradiction, that there exists $r_* \geq s_0$ such that $\ov{\psi}(r_*) < R(r_*) \leq \psi(r_*)$, and still call (by abuse) $r_* \geq s_0$ the infimum of such points.
Then we have
\[
\psi(r_*) = \ov{\psi}(r_*),  \quad \psi'(r_*) \geq \ov{\psi}(r_*)=:a\,,
\]
and since $r_* \geq s_0$,
\[
\psi(r_*)-\psi'(r_*)r_* \leq \ov{\psi}(r_*) - ar_* = 0\,.
\]
By Lemmas \ref{cvxsubaddparacd} and \ref{noninc}, one has 
$$\psi(r)-\psi'(r)r \equiv 0$$
for all $r \geq r_*$, i.e. $\psi \equiv \ov{\psi}$ there, which contradicts our assumption.
\end{proof}

\begin{remark}\label{recipe}
The above result is still valid even if $\psi$ is not $\mathcal C^1$, by the same arguments using the right-derivatives.
Since $\psi$ is $\mathcal C^1$, we can give another characterization of $\ov{\psi}$ through the \textit{parabolicity condition}
\begin{equation*}
\psi(r) - \psi'(r)r \geq 0.
\end{equation*}
From \eqref{eqn: xnot} we can infer that
	\begin{equation}
	\ov{\psi}(r) = \left\{ 
	\begin{array}{ll}
	\psi(r) & \text{if $r\in [0,s_0)$},\\
	\psi'(s_0)r & \text{if $r\in[s_0,+\infty)$},
	\end{array}
	\right.
	\end{equation}
where 
	\[
	s_0:=\sup\{r\in \R_+ \ | \ \psi(r)-\psi'(r) r  > 0\}\,.
	\]
\end{remark}

\section{Mass preserving curves with prescribed (tangential) initial velocity}\label{cstmasscurve} 

Let $(E,u)$ as in Definition \ref{def:av} and assume also that $\psi,\psi', u, H_{\pa E}$ satisfies hypothesis (H). We show here that the set $\text{Ad}(E,u)$, as used in the proof of Proposition \ref{prop:EL} and \ref{prop:charregcrit}, in this context plays the role of the \textit{tangent} space at the point $(E,u)$ to the \textit{manifold} $\text{Cl}(m)$. In particular, for any couple $(v,w)\in \text{Ad}(E,u)$, we build a curve $((E_t,u_t))_{|t|<\e} \in\text{Cl}(m)$ such that
	\begin{align*}
	\frac{d}{d t} \F (E_t,u_t) \Big{|}_{t=0}=\int_{\pa E} [\psi'(u(x)) w(x) + \psi(u(x)) v(x) H(x) ]\d \H^{n-1}(x).
	\end{align*}
We proceed as follows. Let $(v,w)\in \text{Ad}(E,u)$, consider the diffeomorphism $\Phi_{t}:\pa E \rightarrow \R^n$ defined as
	\[
	\Phi_{t}(x):=x+t v(x)\nu_E(x)
	\]
and consider its extension on $\R^n$ through a cut off $\varphi$ as in Remark \ref{rem:just}.  Fix $\xi \in C_b^1(\pa E)$such that
	\[
	\int_{\pa E} \xi(x) \d \H^{n-1}(x)>0
	\]
and for $t,s\in (-\e,\e)$ define the curve 
	\[
	E_t:=\Phi_t(E); \ \ \ \ u_{t,s}(x):=u(\Phi_t^{-1}(x))+t w(\Phi_t^{-1}(x))+s\xi(\Phi_t^{-1}(x)) \ \ \ \text{on $\pa E_t$}.
	\]
Define the $C^1$ function
	\[
	\phi(t,s):=|E_t|+\int_{\pa E_t} u_{t,s}(x) \d \H^{n-1}(x)
	\]
and notice that 
	\begin{align}
	\phi(0,0)&=m \label{eqn:a}\\
	 \frac{\pa \phi}{\pa t}(0,0)&=0, \label{eqn:b}\\
	\frac{\pa \phi}{\pa s}(0,0)&>0 . \label{eqn:c}
	\end{align}
Indeed, relations \eqref{eqn:a} and \eqref{eqn:b} follow respectively by  the construction of $E_t$ and the same computation explained in Remark \ref{rem:just}. For \eqref{eqn:c} instead, we immediately see that it is just a consequence of our choice of $\xi$ thanks to 
	\begin{align*}
	\frac{\pa \phi }{\pa s} (0,0)&= \Big{|}_{(t,s)=0}\int_{\pa E_t} u_{t,s}(x) \d \H^{n-1}(x)\\
	&=\frac{\pa }{\pa s} \Big{|}_{(t,s)=0} \int_{\pa E} [u(x) + t w(x)+s\xi(x)]J^{\pa E} \Phi_t(x) \d \H^{n-1}(x)\\
	&= \int_{\pa E} \xi(x)  \d \H^{n-1}(x)>0.
	\end{align*}
The implicit function theorem applied to the function $\phi$ now guarantees that (up to further decrease $\e$) we can find a curve $\gamma:(-\e,\e)\rightarrow (-\e,\e)$ such that $\gamma(0)=0$ and 
	\begin{equation}\label{constrained curves}
	\phi(t,\gamma(t))=m.
	\end{equation}
This means that $\left(E_t, u_{t,\gamma(t)}\right)\in \text{Cl}(m)$ for all $t\in (-\e,\e)$. Moreover  by differentiating \eqref{constrained curves} and thanks to \eqref{eqn:b},\eqref{eqn:c} we also obtain $\dot{\gamma}(0)=0$. Hence 
	\begin{align*}
	\frac{d}{dt}\Big{|}_{t=0} \F \left(E_t, u_{t,\gamma(t)}\right) &=\frac{d}{dt}\Big{|}_{t=0}\int_{\pa E} \psi(u(x)+t(w(x)+\gamma(t) \xi(x) ) J^{\pa E}\Phi_t(x) \d \H^{n-1}(x)\\
	&= \int_{\pa E} [ \psi'(u(x))w(x)+\psi(u) v(x)H(x) ]  \d \H^{n-1}(x).
	\end{align*}
In particular, in order to compute the constrained first variation, we can restrict ourselves to any generic curve with prescribed initial velocity $(v,w)\in \text{Ad}(E,u)$.

\section{Compactness}

\begin{theorem}\label{app:comp}
Let $((E_k, \mu_k))_{k\in \N} \subset \mathfrak S$ with $E_k\Subset B_R$, for some $R>0$, be a sequence such that
	\[
	\sup_{k\in \N} \ov{\F}(E_k,\mu_k)<+\infty\,,
	\quad\text{ or }\quad
	\sup_{k\in \N} \F(E_k,\mu_k)<+\infty\,.
	\]
Then, up to a subsequence it holds $(E_k,\mu_k)\rightarrow (E,\mu)$ for some $(E,\mu) \in \mathfrak S$.
\end{theorem}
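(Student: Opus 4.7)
My plan is to split the problem into extracting a convergent subsequence of characteristic functions and extracting a convergent subsequence of measures, each via a standard compactness tool, once I have established the relevant uniform bounds. A first useful reduction: since $\psi \geq \ov{\psi}$ and $\F(E,\mu)=+\infty$ whenever $\mu$ has a non-trivial singular part, we have $\ov{\F} \leq \F$ on all of $\mathfrak{S}$, so a uniform bound on $\F(E_k,\mu_k)$ implies one on $\ov{\F}(E_k,\mu_k)$. Hence it is enough to treat the case $C := \sup_k \ov{\F}(E_k,\mu_k) < \infty$.

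To bound the perimeters $P(E_k)$, I would use that $\ov{\psi}$ is non-decreasing and bounded below by $\psi(0)>0$. The non-decreasing property follows from Remark \ref{ajpos} (each $a_j\ge 0$) together with the representation $\ov{\psi}(s) = \sup_j \{a_j s + b_j : b_j \geq 0\}$, which exhibits $\ov{\psi}$ as a supremum of non-decreasing functions; the value at zero is controlled via Lemma \ref{stick}, which gives $\ov{\psi}(0) = \psi(0)$. Consequently,
\[
\psi(0)\, P(E_k) \leq \int_{\pared E_k} \ov{\psi}(u_k)\,\d\H^{n-1} \leq \ov{\F}(E_k,\mu_k) \leq C,
\]
where $u_k$ denotes the density of the absolutely continuous part of $\mu_k$. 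Combined with the constraint $E_k \Subset B_R$, this places $(\ca_{E_k})_{k\in\N}$ in a bounded subset of $BV(B_R)$, so the standard compactness theorem for sets of finite perimeter (see \cite[Theorem 3.39]{AFP} or \cite[Theorem 12.26]{Maggi}) yields a subsequence (not relabeled) with $\ca_{E_k}\to\ca_E$ in $L^1$ for some set $E\in\mathfrak{C}(\R^n)$ with $E\subset \ov{B_R}$.

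To bound the total mass $\mu_k(\R^n)$, I would combine Lemma \ref{cvxsubaddparacd} with a lower bound on $\Theta$. The lemma gives that $s\mapsto \ov{\psi}(s)/s$ is non-increasing with limit $\Theta$, hence $\ov{\psi}(s) \geq \Theta s$ for every $s>0$. Writing $\mu_k = u_k\,\H^{n-1}\restr\pared E_k + \mu_k^s$, this yields
\[
\Theta\, \mu_k(\R^n) \leq \int_{\pared E_k} \ov{\psi}(u_k)\,\d\H^{n-1} + \Theta\,\mu_k^s(\R^n) = \ov{\F}(E_k,\mu_k) \leq C.
\]
The slightly subtle point, and the only real obstacle, is to make sure $\Theta > 0$; but this follows by a short convexity argument: if $\Theta=0$, the convex non-decreasing function $\ov{\psi}$ would have asymptotic slope $0$, forcing its derivative (which is non-decreasing and non-negative) to vanish identically, hence $\ov{\psi}\equiv\psi(0)$. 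This contradicts $\ov{\psi}(s) = \psi(s) > \psi(0)$ for small $s>0$ (using Lemma \ref{stick} and Definition \ref{def:admipsi}). With $\Theta>0$ in hand, $\sup_k \mu_k(\R^n) < \infty$, and Lemma \ref{weak*comp} delivers a further subsequence along which $\mu_k\wt\mu$ for some $\mu\in\mathcal{M}^+_{loc}(\R^n)$. Combining, $(E_k,\mu_k)\to(E,\mu)$ in $\mathfrak{S}$, which is the desired conclusion.
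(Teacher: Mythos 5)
Your proof is correct and follows the same strategy as the paper: derive uniform bounds on $P(E_k)$ and $\mu_k(\R^n)$ from the energy bound, then invoke $BV$ compactness and weak* compactness of Radon measures. Two small remarks. First, the paper's proof invokes the inequality $\ov{\psi}(r)\geq\psi(0)+\Theta r$, which is actually \emph{false} (the correct inequality from the Remark after the definition of $\ov{\F}$ is the reverse, $\ov{\psi}(r)\leq\psi(0)+\Theta r$); your version, which uses $\ov{\psi}\geq\psi(0)$ and $\ov{\psi}(r)\geq\Theta r$ as two separate lower bounds, is the clean way to get the conclusion. Second, you are right that the bound on $\mu_k(\R^n)$ is vacuous unless $\Theta>0$, and your short argument that $\Theta>0$ (via $\ov{\psi}=\psi$ on $[0,s_0]$ with $s_0>0$ and $\psi$ non-constant there) supplies a step that the paper leaves implicit.
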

\begin{proof}
From the fact that $\ov{\psi}(r)\geq\psi(0)+ \Theta r$ we gain
\[
\psi(0)P(E_k)+\Theta\mu_k(\R^n)\leq \ov{\F}(E_k,\mu_k)\leq\F(R_k,\mu_k)\,,
\]
and, in turn
\[
\sup_{k\in \N} P(E_k) <+\infty\,,\quad\quad
\sup_{k\in \N} \mu_k(\R^n) <+\infty\,.
\]
Thanks to the compactness theorem for sets of finite perimeter (see \cite[Theorem 12.26]{Maggi}) and from the weak*-compactness for finite Radon measures (see Lemma \eqref{weak*comp}) we conclude.
\end{proof}

\section{Proof of Theorem \ref{thm: main theorem wriggling}}

This theorem is, of course, completely independent from our energy functional setting and could be proven by simplified versions of Proposition \ref{prop:density} and Lemma \ref{lem:wrigg}. However, for the sake of shortness, we prefer to derive it directly as a consequence of our construction of recovery sequences.  Pick $\psi(s) = 1+s^2/2$, for which
$s_0=\sqrt{2}$. Now choose $$\mu =s_0(1+f) \H^{n-1}\restr\pared E.$$
Since $u=s_0(1+f) \geq s_0$, from \eqref{locaverage} we get that $u_i^k \geq s_0$ in the proof of Proposition \ref{propo ac} since the $u_i^k$ are averages of $u$. Thus, the $E_k$ will always be wriggled locally by a factor $1+f$ and we will always have $$\mu_k = s_0 \H^{n-1}\restr\pa E_k.$$ More precisely the recovery sequence from Theorem \ref{thm:relax} (ii) satisfies
\begin{equation*}
E_k \to E \text{ in }L^1\,, \qquad \quad s_0|D\ca E_k| \wt s_0(1+f)|D\ca E|.
\end{equation*}
Having \eqref{borelconv} in mind, this concludes.

\section{Further geometric constraints}
To take into account additional physical constraints, for instance when depositing adatoms respectively on a flat surface or in a cylindrical box, one can replace everywhere in the above analysis the perimeter $P(E)$ with the relative perimeter $P(E;A)$ where $A$ is an open half-space or an open cylinder. In the statements about critical points or minimizers, balls can then be replaced by the suitable isoperimetric set: half-balls in the case of a half-space, balls in the corners in the case of a cylinder for small masses, and flat graphs for large enough masses.

\subsection*{Acknowledgement}
We would like to thank Irene Fonseca and Giovanni Leoni for bringing this problem to our attention and for fruitful discussions. 
We also thanks the Center for Nonlinear Analysis at Carnegie Mellon University for its support
during the preparation of the manuscript.

Marco Caroccia  was supported by the Fundação para a Ciência e a Tecnologia (Portuguese Foundation for Science and Technology) through the Carnegie Mellon\textbackslash Portugal Program under Grant 18316.1.5004440. Riccardo Cristoferi was supported by the National Science Foundation under Grant No. DMS-1411646.  Laurent Dietrich was supported by the National Science Foundation under the PIRE Grant No. OISE-0967140.



 
  \bibliographystyle{amsplain}
  \bibliography{refs}

\end{document}